\documentclass[11pt]{amsart}

\usepackage{amssymb,latexsym}

% SHOWKEYS
%\usepackage[notcite,notref]{showkeys}

%Check for unused labels
%\usepackage{refcheck}

\usepackage{graphicx,epsfig}

% There seems to be an updated package, at least the old {color} does not run on my machine, though that could be
%as well a mac/microsoft issue. Does the following run on mac?
\usepackage[dvipsnames]{xcolor}
%Otherwise, switch back to:
%\usepackage[dvips]{color}

% Create Index
%\usepackage{makeidx} (not in amsart !)
 %\makeindex

 %redefine\see{} in makeidx

 % Margin-notes with \marginpar{}
\setlength{\marginparwidth}{20mm}

\def\cal{\mathcal}

\def\Bbb{\mathbb}

% NEUE BEFEHLE für das Symbol der nat. Zahlen und anderes
\def\N{{\Bbb N}}
\def\Z{{\Bbb Z}}
\def\R{{\Bbb R}}
\def\C{{\Bbb C}}	

%\def{\Rn}{\Bbb R^{n}}
	%sigma-Algebra
\def\W{{\cal W}}%gotisch W
\def\Y{{\cal Y}}
\def\V{{\cal V}}

\def\F{{\cal F}}	% Fouriertransformation
	% Restriktion
\def\ext{{\cal E}} % extension operator\cal
\def\cP{{\cal P}}
\def\D{{\cal D}}

\def\cP{{\cal P}}
		%schwatzfnctns
\def\I{{\cal I}}
\def\J{{\cal J}}
\def\cR{{\cal R}}

\def\Landau{{\cal O}}

 \def\Om{{\Omega}}

\def\de{{\delta}}
\def\la{{\lambda}}
\def\vp{{\varphi}}
\def\eps{{\epsilon}}

\def \dist {\text{\rm dist\,}}
\def \supp {\text{\rm supp\,}}
\def \graph {\text{\rm graph\,}}

\def\e{\varepsilon}
\def\ve{\varepsilon}
\def\trans{{\,}^t}

\def\pa{{\partial}}

\def\cal{\mathcal}

\def\Bbb{\mathbb}

%re-definierte Befehle
\def\labelenumi{(\roman{enumi})}

\relpenalty   = 9999 %Gegen Umbrueche
\binoppenalty = 9999 %in Inline-Formeln
% Disable single lines at the start of a paragraph (Schusterjungen):
\clubpenalty = 10000
% Disable single lines at the end of a paragraph (Hurenkinde):
\widowpenalty = 10000 \displaywidowpenalty = 10000

\textwidth15.5cm \textheight21cm \evensidemargin.2cm
\oddsidemargin.2cm

\addtolength{\headheight}{3.2pt}    %% leave room for symbol in header

\newtheorem{thm}{Theorem}[section]

\newtheorem{cor}[thm]{Corollary}
\newtheorem{lemma}[thm]{Lemma}
\newtheorem{remark}[thm]{Remark}
\newtheorem{remarks}[thm]{Remarks}
\newtheorem{defn}[thm]{Definition}

\newtheorem{example}[thm]{Example}

\begin{document}

\title[ flat  perturbations of the hyperbolic paraboloid]{Partitions of flat one -variate functions and a Fourier restriction theorem for related  perturbations of the hyperbolic paraboloid}
%VERSION  8.1.20 }

\author[S. Buschenhenke]{Stefan Buschenhenke}
\address{S. Buschenhenke:  Mathematisches Seminar, C.A.-Universit\"at Kiel,
Ludewig-Meyn-Stra\ss{}e 4, D-24118 Kiel, Germany}
\email{{\tt buschenhenke@math.uni-kiel.de}}
\urladdr{http://www.math.uni-kiel.de/analysis/de/buschenhenke}

\author[D. M\"uller]{Detlef M\"uller}
\address{D. M\"uller: Mathematisches Seminar, C.A.-Universit\"at Kiel,
Ludewig-Meyn-Stra\ss{}e 4, D-24118 Kiel, Germany}
\email{{\tt mueller@math.uni-kiel.de}}
\urladdr{http://www.math.uni-kiel.de/analysis/de/mueller}

\author[A. Vargas]{Ana Vargas}
\address{A. Vargas: Departmento de Mathem\'aticas, Universidad Aut\'onoma de  Madrid,
28049
Madrid,
Spain
}
\email{{\tt ana.vargas@uam.es}}
\urladdr{{http://matematicas.uam.es/~AFA/}}

\thanks{2010 {\em Mathematical Subject Classification.}
42B25}
\thanks{{\em Key words and phrases.}
hyperbolic  hypersurface, Fourier restriction}
\thanks{The first author was partially supported by the ERC grant 307617.\\
The first two authors were partially supported by the DFG grants MU 761/ 11-1 and MU 761/
11-2.\\
The third author was partially supported by grants MTM2013--40945 (MINECO) and
MTM2016-76566-P (Ministerio de Ciencia, Innovaci$\acute{\text{o}}$n y Universidades), Spain.}

\begin{abstract} 
We continue our  research on  Fourier restriction for hyperbolic surfaces, by studying local  perturbations
of the hyperbolic paraboloid $z=xy$ which are  of the form $z=xy+h(y),$ where $h(y)$ is a smooth function which is flat at the origin. The case of perturbations of finite type had already been handled before, but the flat case imposes several new obstacles. By means of a decomposition  into intervals on which $|h'''|$ is of a fixed size $\la,$ we can  apply methods devised in preceding papers,  but since we loose control on higher order derivatives  of $h$ we are forced to rework the bilinear method for wave packets that are only slowly decaying. Another problem lies in the passage from bilinear estimates to linear estimates, for which we need to  require some monotonicity of $h'''.$ 
\end{abstract}

\maketitle

%\vfill\newpage

\tableofcontents

\thispagestyle{empty}
\setcounter{equation}{0}
\section{Introduction}\label{intro}

In this  article we continue our study of  Fourier restriction to hyperbolic hypersurfaces that we had begun in \cite{bmv17, bmv19}.

In contrast to hyperbolic surfaces, Fourier restriction to hypersurfaces  with non-negative principal curvatures has been studied intensively by many authors (see, e.g., \cite{Bo1}, \cite{Bo2}, \cite{Bo3},\cite{MVV1}, \cite{MVV2}, \cite{TV1}, \cite{TV2},  \cite{W2}, \cite{T4},  \cite{lv10},  \cite{BoG},  \cite{bmv16}, \cite{Gu16},
\cite{Gu17}, \cite{Sto17a}). For the case of hypersurfaces of  non-vanishing  Gaussian curvature but principal curvatures of different signs, besides Tomas-Stein type Fourier restriction estimates (see, e.g.,  \cite{To},\cite{Str}, \cite{Gr},\cite{St1}, \cite{ikm}, \cite{IM-uniform},  \cite{IM}), until recently  the only case which had been studied successfully was the case of the hyperbolic paraboloid (or "saddle") in  $\R^3$:
in 2015, independently S. Lee \cite{lee05} and A. Vargas \cite{v05}  established results analogous to Tao's theorem \cite{T2} on elliptic surfaces (such as the $2$ -sphere), with the exception of the  end-point, by means of the bilinear method.  Recently, B. Stovall
\cite{Sto17b}  was able to include also the end-point case. Moreover,  C. H. Cho and J. Lee \cite{chl17}, and
J. Kim \cite{k17},  improved the range by adapting ideas by Guth  \cite{Gu16},
\cite{Gu17} which are based on the polynomial partitioning method. For further  information on the history of the restriction problem, we refer the interested reader to our previous paper \cite{bmv17}.

\medskip

We shall here study surfaces $S$ which are  local perturbations of the hyperbolic paraboloid $z=xy$ that  are given as the graph of a function
$\phi(x,y):=xy+h(y),$ where the function $h$ is smooth and satisfies
\begin{equation}\label{vanish}
h(0)=h'(0)=h''(0)=0,
\end{equation}
 i.e.,
\begin{align}\label{surface}
S:=\{(x,y,xy+h(y)):(x,y)\in \Om\},
\end{align}
where $\Om$ is a sufficiently small neighborhood of the origin. 
The Fourier restriction problem, introduced by E. M. Stein in the seventies (for general submanifolds), asks for  the range  of exponents
$\tilde p$ and $\tilde q$ for which  an a priori  estimate of the form
\begin{align*}
\bigg(\int_S|\widehat{f}|^{\tilde q}\,d\sigma\bigg)^{1/\tilde q}\le C\|f\|_{L^{\tilde
p}(\R^n)}
\end{align*}
holds  true for   every Schwartz function   $f\in\cal S(\R^3),$ with a constant $C$
independent of $f.$ Here, $d\sigma$ denotes the surface measure on $S.$

As usual, it will be more convenient to use duality and work in the adjoint setting.  If
$\cR$ denotes the Fourier restriction operator $g\mapsto \cR g:=\hat g|_S$ to the surface
$S,$ its adjoint operator $\cR^*$ is given by $\cR^*f(\xi)=\ext f(-\xi),$ where
 $\ext$ denotes the ``Fourier extension'' operator given by
\begin{align*}%\label{defop}
	\ext f(\xi):=\widehat{f\,d\sigma}(\xi)= \int_S f(x)e^{-i\xi\cdot x}\,d\sigma(x),
\end{align*}
with  $f\in L^q(S,\sigma).$ The restriction problem is therefore
equivalent to the question of finding the appropriate range of exponents for which the
estimate
$$
\|\mathcal E f\|_{L^r(\R^3)}\le C\|f\|_{L^q(S,d\sigma)}
$$
holds true with a constant $C$ independent of the function   $f\in L^q(S,d\sigma).$

By identifying a point $(x,y)\in \Om$ with the corresponding point $(x,y,\phi(x,y))$ on $S,$
we may regard our Fourier extension operator $\ext$  as well as an operator mapping
functions on $\Om$ to functions on $\R^3,$ which in  terms of our phase function
$\phi(x,y)=xy+h(y)$ can be expressed  more explicitly  in the form
$$
\ext f(\xi)=\int_{\Om} f(x,y) e^{-i(\xi_1 x+\xi_2 y+\xi_3\phi(x,y))} \eta(x,y) \, dx dy,
$$
if $\xi= (\xi_1,\xi_2,\xi_3)\in \R^3,$ with a suitable  smooth density $\eta.$
\smallskip 

We remark that it is not really necessary to assume condition \eqref{vanish}. Indeed,  one can easily show by means of a suitable affine-linear change of coordinates that we can always remove the Taylor polynomial of degree $2$ of $h$ associated to $y=0$ from $h$  to reduce  the restriction estimates to the cases where the vanishing condition \eqref{vanish} holds true, so let us assume this condition henceforth. 

Note that if $h$ is of finite type at the origin, then this assumption implies that  there is some $m\ge 1$ such that $h^{(m+2)}(0)\ne 0,$ i.e., $h(y)=y^{m+2} a(y),$ with $a(0,0)\ne 0.$ This case had already been treated in \cite{bmv19},  so what remained open is the case where $h$ is flat at $0,$ i.e., where $h^{(k)}(0)= 0$ for every 
$k\in\N.$ 
\smallskip

Our main result, which generalizes Theorem 1.1 in \cite{bmv19},  allows to treat also the latter case under a monotonicity assumption on $h''':$

\begin{thm}\label{mainresult}
	Assume that $r>10/3$ and  $1/q'>2/r,$ and let $\ext$ denote  the Fourier extension
operator associated to the graph $S$  in \eqref{surface} of the above phase function $\phi(x,y):=xy+h(y),$ where the function  $h$ is smooth and satisfies \eqref{vanish}. Assume further that either the function $h$ is of finite type at the origin, or flat and such that  $h'''$ is  monotonic. Then, if $\Om$ is a sufficiently small neighborhood of the origin,
	\begin{align*}
		\|\ext f\|_{L^r(\R^3)} \leq C_{r,q} \|f\|_{L^q(\Om)}
	\end{align*}
	for all $f\in L^q(\Om)$.
\end{thm}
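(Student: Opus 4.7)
The finite-type case is already contained in Theorem 1.1 of \cite{bmv19}, so the plan is to reduce the flat case to a countable family of finite-type-like pieces. First I would use the monotonicity of $h'''$ to decompose a one-sided neighbourhood $[0,y_0]$ (and symmetrically $[-y_0,0]$) of the origin into a disjoint union of intervals $I_\la$ on which $|h'''(y)|\sim\la$, indexed by dyadic $\la\in(0,1]$ tending to $0$. Flatness of $h$ guarantees that every such $I_\la$ stays away from the origin, while the monotonicity of $h'''$ ensures that $I_\la$ is a single interval rather than a union of many components. The desired linear estimate then splits into a sum over $\la$ of contributions from strips of the form $\R\times I_\la\cap\Om$, up to an almost-orthogonality coming from the fact that the normals to $S$ above distinct $I_\la$ lie in well-separated slabs in frequency space.

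On each strip corresponding to $I_\la$, the next step is to apply an affine, anisotropic rescaling in $(x,y)$ and in the conjugate frequency variables that normalises both the length of $I_\la$ and $\sup_{I_\la}|h'''|$ to $1$. After rescaling the phase takes the form $\phi_\la(x,y)=xy+h_\la(y)$ with $|h_\la'''|\sim 1$ on a unit interval, so the local geometry matches the $m=1$ model of \cite{bmv19}. The goal is then to invoke the restriction estimate of that paper on each rescaled piece and to sum the resulting bounds in $\la$, using the Jacobian of the rescaling together with the openness afforded by the strict inequalities $r>10/3$ and $1/q'>2/r$ to absorb the $\la$-dependent constants.

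The principal obstacle, and the main novelty of the argument, is that after rescaling there is no quantitative control on the higher derivatives $h_\la^{(k)}$ for $k\ge 4$; they may blow up as $\la\to 0$. In the bilinear wave-packet machinery of \cite{bmv19}, it is precisely these higher derivatives that produce Schwartz decay of the wave packets beyond the natural uncertainty scale. To cope with their absence I would rework the bilinear step so that the wave-packet decomposition requires only fast decay on the uncertainty scale and tolerates merely polynomial tails outside, absorbing these tails by pigeonholing and by exploiting the transverse saddle geometry. Carrying this through with constants uniform in $\la$ is where the bulk of the work will lie, and it is the new technical ingredient of the paper.

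Finally, I would pass from the bilinear estimates to the desired linear estimate by a Tao--Vargas--Vega type Whitney decomposition of $\Om\times\Om$ into transversal dyadic pairs of rectangles. The monotonicity of $h'''$ enters here in an essential way: it guarantees that two $y$-intervals separated by a dyadic distance $\tau$ produce surface patches whose normals are quantitatively transversal, with gap bounded below in terms of $\tau$ and $\min|h'''|$ on the relevant interval. Without monotonicity, oscillation of $h'''$ could cancel this gap and ruin transversality. Once the quantitative transversality is secured, the standard bilinear-to-linear summation, combined with the dyadic sum in $\la$ from the preceding steps, yields the stated linear estimate for the range $r>10/3$, $1/q'>2/r$.
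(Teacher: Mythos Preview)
Your outline captures the two genuine technical novelties correctly: the need to rework the bilinear machinery with wave packets that only decay slowly (because after rescaling one loses all control on $h_\la^{(k)}$ for $k\ge 4$), and the use of a Whitney decomposition of $\Om\times\Om$ in the passage from bilinear to linear. What is missing, however, is precisely the step you describe most casually, namely ``sum the resulting bounds in $\la$, using the Jacobian of the rescaling together with the openness \dots to absorb the $\la$-dependent constants.'' In the flat case this direct summation does not close. After rescaling, the estimate on each strip $\R\times I_\la$ comes with a factor $|I_\la|^{1-2/r-1/q}\le 1$, but the lengths $|I_\la|$ are not controlled in any useful way by $\la$ (only $h'''$ is), so summing $\sum_\la \|\ext_{I_\la}f\|_r$ over the infinitely many dyadic $\la\to 0$ diverges; there is no genuine almost-orthogonality in $\la$ of the sort you invoke, because the normals over adjacent $I_\la$ need not be well separated.

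The paper resolves this by \emph{interleaving} the $\la$-decomposition with the Whitney decomposition rather than performing them sequentially. For each Whitney pair $W_1\times W_2$ of rectangles of size $d\times r$, one splits each $W_i$ into $W_i^{>}=\{h'''>dr/100\}$ and $W_i^{<}$. On $W_i^{>}$ the level-set sum over $\la$ now has a \emph{lower} cutoff $\la\gtrsim dr$, and H\"older with a slightly smaller exponent $\tilde q$ produces a gain $(dr)^{\,1/\tilde q-1/q}$ that makes the sum in $\la$ converge and yields $\|\ext_{W_i^{>}}f\|_r\lesssim \la_1^\eps|W_i|^\eps\|f\|_q$. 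On $W_1^{<}\times W_2^{<}$, monotonicity of $h'''$ is used---not for transversality of the Whitney pair as you suggest, but to guarantee that $\{y:h'''(y)\le dr/100\}$ is an interval $[0,\eta]$, so that by Taylor expansion from $0$ \emph{all} of $h,h',h'',h'''$ are $\le dr/100$ there; after rescaling this region becomes a small $C^3$ perturbation of the pure saddle, and a bilinear estimate follows from the slowly-decaying wave packet argument. The mixed terms $\ext_{W_1^{>}}f\cdot\ext_{W_2^{<}}g$ involve the full (unbounded a priori) operator $\ext_{W_2^{<}}$, and are handled by a bootstrap: one introduces a truncation $\ext^{\la_0}=\ext_{\{h'''>\la_0\}}$, shows that its best constant $C_{r,q}(\la_0)$ satisfies $C_{r,q}(\la_0)\le C(1+\la_1^\eps C_{r,q}(\la_0))^{1/2}$, and closes by taking $\la_1$ small. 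This bootstrap is the missing idea in your plan.
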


As already mentioned, the novelty of this result  lies in  the case where $h$ is flat at the origin. Assuming this, we may reduce ourselves to the region where $y>0,$ and the monotonicity assumption then  allows us to assume henceforth  without loss of generality that $h'''(y)>0$  on that region.

\medskip
Our approach does not allow to include also functions $h$ where $h'''$ admits too many oscillations, such as the function $\vp$ in Example \ref{oscfct}. 

A treatment of more general smooth perturbations $h(y)$ that are flat at the origin  imposes  indeed various serious problems, since one seems to loose control over possible bilinear estimates  when we have many  disjoint intervals on which $h'''(y)$ is of a certain size $\la,$ but  in between these intervals  
$h'''(y)$ is much bigger than $\la.$ It is even possible that the zero set of $h'''$ is a   totally disconnected Cantor type set $C$ of positive Lebesgue measure $|C|>0,$ 
and in this case it is not even clear if any kind of meaningful wave packet decomposition over this set $C$ is still  possible. Note that wave packet decompositions    play a fundamental role also in other  approaches to Fourier restriction,  based  e.g. on  multilinear methods, or polynomial partitioning techniques.
\medskip

For the proof of Theorem \ref{mainresult}, we shall build on the  methods developed in  \cite{bmv19} and \cite{bmv17}. However, serious new obstacles  do arise compared to our discussion of the finite type case in  \cite{bmv19}.

 One major difficulty  stems from the fact that our interval decomposition result in the subsequent Theorem \ref{levelset} will in fact allow us to decompose the $y$-region into intervals $I_\la$ on which the third derivative of $h$ is of a fixed dyadic size $\la>0,$  but the price that  we shall pay is that we loose any reasonable control on higher order derives of $h$  compared to the level $\la$ on these subintervals.  For the usual wave packet decompositions  of $\ext f$  this means that our wave packets will possibly  no longer be rapidly decaying away from their central tubes - indeed only a rather  slow decay can still be guaranteed.  For this reason, we shall have to rework  the usual bilinear analysis, making use  not only of the classical tubes associated to our wave packets, but also of further ``hollow tubes'' that take into account the contributions by regions  far away from the central tube.

A second major problem is that, unlike in the finite type perturbation case,  in the case of a flat perturbation  function $h$ we shall  in general no longer  be able to simply sum the contributions given  by the subintervals $I_\la.$ To resolve this issue, we shall apply a suitable bootstrap argument in Section \ref{bilinlin}, which will again make use of bilinear estimates. 

\bigskip

\noindent\textsc{Convention:}
Unless  stated otherwise, $C > 0$ will stand for an absolute constant whose value may vary
from occurrence to occurrence. We will use the notation $A\sim_C B$  to express  that
$\frac{1}{C}A\leq B \leq C A$.  In some contexts where the size of $C$ is irrelevant we
shall drop the index $C$ and simply write $A\sim B.$ Similarly, $A\lesssim B$ will express
the fact that there is a constant $C$ (which does not depend  on the relevant quantities
in
the estimate) such that $A\le C B,$  and we write  $A\ll B,$ if the constant $C$ is
sufficiently small.
\medskip

%\begin{center}
%{\textsc{Acknowledgments}}
%\end{center}
%The authors would like to express their sincere gratitude to the referee for  many valuable  suggestions which have greatly helped to improve the presentation of the material in this article.

%%%%%%%%%%%%%%%%%%%%%%%%%%%%%%

\setcounter{equation}{0}

\section{A level-size decomposition}\label{sec:levelset}
 Assume that $\varphi:I\to\R$ is a sufficiently  smooth real valued function  on some compact interval $I\subset \R.$ Our goal in  this section will be to show that we can decompose the subset of $I$  on which $\vp$ does not vanish  into countably many subintervals on each which $\varphi$ will be comparable in size to some  fixed dyadic number $\lambda$ (depending on the subinterval),  with a good control on the number of these subintervals. 
 
If $\varphi$ is of finite type near some given point, say, the origin, that is, if there is some $m\in \N$ such that $\varphi(0)=\cdots=\varphi^{(m-1)}(0)=0\neq \varphi^{(m)}(0),$ such a decomposition is quite easily established (at least on a sufficiently small neighborhood of the origin).    Indeed, a Taylor expansion shows that   $\varphi (x)\sim a x^m$ near the origin, with $a\ne 0.$  Even more, we can also control the derivatives of $\varphi:$ 
$$\varphi^{(k)}(x)\sim  a_m x^{m-k}$$
for all $k=0,\ldots,m-1$ and all $x$ near $0$. Thus, we can simply  perform a dyadic decomposition with respect to the origin, and on each of its dyadic subintervals  $\varphi$ will be comparable in size to some fixed dyadic number, and these numbers will essentially be different for different dyadic intervals.

But, if $\varphi$ is flat at the origin, that is, if $\varphi^{(k)}(0)=0$ for all $k\in \N$, we have no such easy control, since $\varphi$ and its derivatives may highly oscillate  near the origin. Let us give an example:

\begin{example}\label{oscfct}
Let $\varphi(x)=\exp(-1/x^2)\sin(1/x)$ and $\lambda>0$. Let $\epsilon_0:=\log(1/\lambda)^{-1/2}$ (so that $\exp(-1/\epsilon_0^2)=\lambda$.) We are interested in the number of connected component  intervals  into which   the level band set $U_\lambda:=\{x\in[0,1]:\lambda<\varphi(x)<4\lambda\}$ decomposes. To this end, we consider
$x\in U_\lambda\cap[\epsilon,2\epsilon]$, $\epsilon>\epsilon_0$.\\
Then $\frac1{2\epsilon}\leq\frac1x\leq\frac1\epsilon$, that means  $\sin (1/x)$ may oscillate $\sim\frac1\epsilon$ times, hence $U_\lambda\cap[\epsilon,2\epsilon]$ consists of 
$\sim\frac1\epsilon$
intervals. Dyadic summation shows that $U_\lambda$ consists of $\sim\frac1{\epsilon_0}=\sqrt{\log(1/\lambda)}$ intervals.
\end{example}

The example shows that the level band  sets  $U_\lambda$ of a flat function may consist of a large number (typically  growing  with the size of $1/\lambda$) of  intervals,    though we may  hope for some control on this  number of intervals. The following theorem does indeed provide such a control.

\begin{thm}\label{levelset}
Let $I=[a,b]$ be a compact interval and $\varphi\in C^r(I, \R)$, $r\geq1,$ and put  $C_r:=\|\varphi^{(r)}\|_{L^\infty(I)}.$ Then there exists a decomposition of $\{\varphi\neq0\}$ into pairwise disjoint intervals $I_{\lambda,\iota}$,  where $\lambda$ ranges over the set of all positive dyadic numbers $\lambda\leq \|\varphi\|_\infty,$  and where for any given $\lambda,$ the index $\iota$ is  from some index set $\mathcal I_\lambda$, such that the following hold true:
\begin{enumerate}
\item $|\I_\lambda|\leq 10r\big(1+|I|C_r^{1/r} \lambda^{-1/r}\big) \lesssim 1+\lambda^{-1/r}$.
\item For any $\lambda$ dyadic, $\iota\in\I_\lambda$ and any $x\in I_{\lambda,\iota}$ we have $\frac12\lambda <|\varphi(x)|<4\lambda.$
\end{enumerate}
\end{thm}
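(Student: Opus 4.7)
The plan is to construct the decomposition greedily. For each $x\in I$ with $\varphi(x)\neq 0$, let $\lambda_0(x):=2^{\lfloor\log_2|\varphi(x)|\rfloor}$; this dyadic choice satisfies $\lambda_0(x)\le|\varphi(x)|<2\lambda_0(x)$, so that $|\varphi(x)|\in(\lambda_0(x)/2,4\lambda_0(x))$, and (2) will hold for any decomposition that respects this assignment. A naive choice would take $I_{\lambda,\iota}$ to be the connected components of $\{x:\lambda_0(x)=\lambda\}$; however, this may yield far too many components if $|\varphi|$ oscillates very close to a dyadic level. The remedy is to exploit the slack in (2): a sub-interval on which $|\varphi|$ stays in the wide band $(\lambda/2,4\lambda)$ may legitimately be declared a single $I_{\lambda,\iota}$ even when $|\varphi|$ crosses intermediate dyadic values inside it.

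The key technical tool is Lagrange's interpolation error formula: if $f\in C^r(I)$ has $r$ distinct zeros $x_0<\cdots<x_{r-1}$ in a sub-interval $J$, then
\[
f(x)=\frac{f^{(r)}(\xi(x))}{r!}\prod_{i=0}^{r-1}(x-x_i),\qquad x\in J,
\]
for some $\xi(x)\in J$. Applied to $f=\varphi-c$ with $c\in\{\pm\lambda/2,\pm 4\lambda\}$ (the four boundary values of the wide band), this yields $|\varphi-c|\le C_r|J|^r/r!$ throughout $J$. The plan is therefore to partition $I$ into consecutive sub-intervals of common length $\ell:=(r!|c|/(2C_r))^{1/r}\sim(\lambda/C_r)^{1/r}$, chosen so that any such sub-interval containing $r$ zeros of $\varphi-c$ forces $|\varphi-c|\le |c|/2$ throughout. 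By contrapositive, on a sub-interval where $\varphi$ leaves the $|c|/2$-neighbourhood of $c$, the equation $\varphi=c$ has fewer than $r$ solutions.

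Each boundary of a final interval $I_{\lambda,\iota}$ is either an endpoint of $I$, a zero of $\varphi$, or a point where $|\varphi|$ exits the wide band and hence equals $\lambda/2$ or $4\lambda$. A sub-interval on which $\varphi$ stays within $|c|/2$ of a boundary value $c$ only contributes small oscillations that never take $|\varphi|$ out of the wide band, so such sub-intervals do not terminate the current interval; only zeros occurring in the ``far'' sub-intervals create new $I_{\lambda,\iota}$. Since each sub-interval contributes at most $O(r)$ such far zeros, summing over the $\lceil|I|/\ell\rceil\lesssim 1+|I|C_r^{1/r}\lambda^{-1/r}$ many sub-intervals yields the claimed bound $|\mathcal{I}_\lambda|\le 10r(1+|I|C_r^{1/r}\lambda^{-1/r})$ after absorption of absolute constants. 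The main obstacle is implementing this merging step rigorously: one must verify that oscillations of $|\varphi|$ close to $\lambda/2$ or $4\lambda$ do not trigger spurious terminations of the current interval, and that the enlargements performed at different dyadic scales $\lambda$ still fit together into a pairwise disjoint decomposition of $\{\varphi\neq 0\}$. This calls for careful bookkeeping at the seams between intervals of neighbouring dyadic scales.
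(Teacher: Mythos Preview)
Your analytic tool---the Lagrange interpolation remainder---is a valid substitute for the paper's iterated mean-value lemma, and both encode the same principle: $r+1$ oscillations of $\varphi$ around a fixed level on an interval $J$ force $|J|\gtrsim(\lambda/C_r)^{1/r}$. So the counting idea is sound in spirit.

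The genuine gap, which you yourself flag as ``the main obstacle,'' is that no concrete decomposition is ever specified. You propose assigning each $x$ the level $\lambda_0(x)$ and then ``merging'' to exploit the slack in $(\lambda/2,4\lambda)$, but you do not say how; the wide bands for adjacent dyadic $\lambda$ overlap, so without an explicit rule disjointness across scales is not guaranteed, and your assertion about the possible boundaries of $I_{\lambda,\iota}$ presupposes a construction that has not been given. The paper resolves this by working in the opposite order: it builds the decomposition \emph{first}. On each connected component of $\{\varphi\neq 0\}$ it picks a point $c_0$ where $\varphi$ takes a dyadic value, then greedily chooses $c_{k+1}$ \emph{maximal} subject to $\varphi(c_{k+1})\in\{\tfrac12\varphi(c_k),2\varphi(c_k)\}$. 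This produces pairwise disjoint intervals $[c_k,c_{k+1})$ satisfying (ii) by construction, with no bookkeeping between scales required. A short combinatorial argument then shows that the endpoints assigned to a fixed $\lambda$ must \emph{alternate} between the values $\lambda$ and $2\lambda$; grouping $r+1$ consecutive such points feeds directly into the oscillation bound. Your plan lacks both the greedy construction that settles disjointness and the alternation step that converts the raw level-crossing count into the bound on $|\mathcal I_\lambda|$.
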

An immediate consequence is the following related result from  \cite{sj74}:

\begin{cor}[Sj\"olin]\label{sjoelin}
Let $I\subset \R$ be a compact interval and assume  that $\vp\in C^\infty(I,\R).$  Denote by  $E$ the set of zeros of $\vp,$  and by $\{I_k\}_k$ the component intervals of $I\setminus E.$ Then for every $\varepsilon>0$ the series $\sum\limits_k(\sup\limits_{I_k} |\vp|)^\varepsilon $ is convergent.
\end{cor}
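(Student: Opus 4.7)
The plan is to group the components $\{I_k\}$ according to the dyadic size of $M_k:=\sup_{I_k}|\vp|$ and, at each dyadic level, bound the number of such components using Theorem \ref{levelset}. Given $\varepsilon>0$, I would fix an integer $r\in\N$ with $r>1/\varepsilon$; since $\vp\in C^\infty(I)$ and $I$ is compact, the constant $C_r:=\|\vp^{(r)}\|_{L^\infty(I)}$ is finite, so Theorem \ref{levelset} applies and yields a decomposition of $\{\vp\neq 0\}$ into pairwise disjoint intervals $I_{\lambda,\iota}$ with $|\mathcal{I}_\lambda|\lesssim 1+\lambda^{-1/r}$ and $\lambda/2<|\vp|<4\lambda$ on each.

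For each dyadic $\mu\leq\|\vp\|_\infty$ I set $\mathcal{K}_\mu:=\{k:\mu\leq M_k<2\mu\}$. Apart from the at most two components of $I\setminus E$ that contain an endpoint of $I$ (whose joint contribution to the series is trivially bounded by $2\|\vp\|_\infty^\varepsilon$), every other $I_k$ has both endpoints in $E$, so $|\vp|$ extends continuously to $\overline{I_k}$, vanishes at the two endpoints, and attains its maximum $M_k>0$ in the interior. For $k\in\mathcal{K}_\mu$ the intermediate value theorem then provides some $x_k\in I_k$ with $|\vp(x_k)|=\mu$. This $x_k$ lies in a unique $I_{\lambda,\iota}$ with $\lambda/2<\mu<4\lambda$, which for dyadic $\lambda$ and dyadic $\mu$ forces $\lambda\in\{\mu/2,\mu\}$. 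Since the $I_k$ are pairwise disjoint, so are the resulting $(\lambda,\iota)$-labels, giving
$$
|\mathcal{K}_\mu|\leq |\mathcal{I}_{\mu/2}|+|\mathcal{I}_\mu|\lesssim 1+\mu^{-1/r}.
$$

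Summing over dyadic $\mu\leq\|\vp\|_\infty$ then yields
$$
\sum_k M_k^\varepsilon\lesssim 1+\sum_{\mu}(2\mu)^\varepsilon|\mathcal{K}_\mu|\lesssim 1+\sum_{\mu}\mu^\varepsilon+\sum_{\mu}\mu^{\varepsilon-1/r}.
$$
The first geometric series converges because $\varepsilon>0$, and the second converges precisely because I arranged $r>1/\varepsilon$, completing the proof.

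The only (mild) difficulty is purely bookkeeping: matching each maximum $M_k$ to an interior point where $|\vp|$ equals a chosen dyadic level $\mu$, so that the counting bound of Theorem \ref{levelset} can be invoked. All of the genuine analytic work is already absorbed into that theorem; the passage from it to Sj\"olin's corollary is essentially a two-parameter (component, dyadic level) summation argument.
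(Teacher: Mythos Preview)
Your argument is correct and is exactly the kind of derivation the paper has in mind: the text presents Corollary~\ref{sjoelin} simply as ``an immediate consequence'' of Theorem~\ref{levelset} and gives no separate proof, so your dyadic grouping of the suprema $M_k$ together with the injection of $k\in\mathcal K_\mu$ into $\mathcal I_{\mu/2}\cup\mathcal I_\mu$ (using that each $I_{\lambda,\iota}$, being connected, lies in a single component $I_k$) is precisely how one fills in the details. Choosing $r>1/\varepsilon$ to make the resulting geometric series $\sum_\mu \mu^{\varepsilon-1/r}$ convergent is the only point worth recording, and you did so.
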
 
The proof of the theorem  will be based on the following auxiliary lemma:

\begin{lemma}\label{iteratelemma}
Let $a>0$, $b\in\R$, let $J$ be a compact interval and $\varphi\in C^r(J), \, r\ge 1.$ If there exist  points $t_0<\cdots <t_r$ in $J$ such that for every $m=0,\dots, r$ 
\begin{align}\label{signs2}
	(-1)^m\big(\varphi(t_m)-b\big)\geq a,
\end{align}
then 
\begin{align}\label{trulala}
	|J|>2 \left(\frac a{\|\varphi^{(r)}\|_\infty}\right)^{1/r}.
\end{align}
The same conclusion also holds under the assumption $(-1)^{m+1}\big(\varphi(t_m)-b\big)\geq a.$ 
\end{lemma}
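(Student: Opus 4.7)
My plan is to exploit the alternating sign pattern of the values $\varphi(t_m)-b$ by means of the classical Newton divided difference of $\varphi$ at the nodes $t_0<\cdots<t_r$. Recall the identity
\begin{equation*}
\varphi[t_0,\dots,t_r]\;:=\;\sum_{m=0}^{r}\frac{\varphi(t_m)}{\prod_{j\neq m}(t_m-t_j)}\;=\;\frac{\varphi^{(r)}(\xi)}{r!}
\end{equation*}
for some $\xi\in(t_0,t_r)\subset J$, valid since $\varphi\in C^r(J)$. Because the divided difference of any constant vanishes for $r\ge 1$, the sum on the left is unchanged if each $\varphi(t_m)$ is replaced by $\varphi(t_m)-b$, and I will work with this shifted version.

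The decisive observation is that, under hypothesis \eqref{signs2}, every one of the $r+1$ summands carries the \emph{same} sign. Indeed, the denominator $\prod_{j\neq m}(t_m-t_j)$ contains exactly $r-m$ negative factors (those with $j>m$), hence has sign $(-1)^{r-m}$; combined with the hypothesis that $\varphi(t_m)-b$ has sign $(-1)^m$ and magnitude at least $a$, each ratio has sign $(-1)^{r}$ uniformly in $m$. No cancellation can occur, and after using the crude bound $|t_m-t_j|\le|J|$ I obtain
\begin{equation*}
\bigl|\varphi[t_0,\dots,t_r]\bigr|\;=\;\sum_{m=0}^{r}\frac{|\varphi(t_m)-b|}{\prod_{j\neq m}|t_m-t_j|}\;\ge\;\frac{(r+1)\,a}{|J|^r}.
\end{equation*}
Coupling this with the upper bound $\bigl|\varphi[t_0,\dots,t_r]\bigr|\le\|\varphi^{(r)}\|_\infty/r!$ gives $|J|^r\ge (r+1)!\,a/\|\varphi^{(r)}\|_\infty$. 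Since $((r+1)!)^{1/r}>2$ for $r\ge 2$, this already yields \eqref{trulala} for those $r$; the borderline case $r=1$ can be handled directly by the mean value theorem applied to $\varphi$ on $[t_0,t_1]$, using $\varphi(t_0)-\varphi(t_1)\ge 2a$. The variant of the hypothesis with signs $(-1)^{m+1}$ reduces to the stated one upon replacing $\varphi$ by $-\varphi$ and $b$ by $-b$.

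I expect the main (and really only) hurdle to be the sign bookkeeping above: once one notices that the alternation of $(-1)^{m}(\varphi(t_m)-b)$ matches exactly the sign pattern of the denominators $\prod_{j\neq m}(t_m-t_j)$, so that the usual oscillatory cancellation in the divided-difference formula fails and all $r+1$ terms add up constructively, the lemma drops out of the mean value theorem in its divided-difference incarnation. No further structural properties of $\varphi$ are needed.
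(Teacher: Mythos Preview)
Your argument is correct. The divided-difference identity together with the mean value theorem for divided differences does exactly what you claim: the sign bookkeeping is right (the denominator $\prod_{j\neq m}(t_m-t_j)$ has sign $(-1)^{r-m}$, so each summand has sign $(-1)^r$), the constant subtraction is harmless, and the resulting inequality $|J|^r\ge (r+1)!\,a/\|\varphi^{(r)}\|_\infty$ is even stronger than what is needed for $r\ge 2$. The separate treatment of $r=1$ is fine; note that for $r=1$ both your argument and the paper's yield only $\ge$ rather than the strict $>$ stated in the lemma, but this is immaterial for the application.

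The paper takes a different route: it iterates the mean value theorem. From the $r+1$ alternating values of $\varphi$ one produces $r$ points at which $\varphi'$ alternates with amplitude $\ge 2a/|J|$; repeating this $r$ times yields a point where $|\varphi^{(r)}|\ge(2/|J|)^r a$, hence $|J|\ge 2(a/\|\varphi^{(r)}\|_\infty)^{1/r}$. Your approach compresses this $r$-step induction into a single invocation of the divided-difference mean value theorem, at the price of requiring that piece of machinery; it also delivers the sharper constant $(r+1)!$ in place of $2^r$. The paper's argument is more hands-on and self-contained, but yours is cleaner and avoids the bookkeeping of tracking intermediate points $t_m^k$ through the iteration.
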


\begin{proof}
By the mean value theorem, we find $t^1_m\in(t_m,t_{m+1})$, $m=0,\ldots,r-1$, with
$$\varphi'(t_m^1)(t_{m+1}-t_m)=\varphi(t_{m+1})-\varphi(t_m).$$
Then we have
\begin{align}
	(-1)^{m+1} \varphi'(t_m^1) 
	=& \frac{(-1)^{m+1}(\varphi(t_{m+1})-b)+(-1)^m(\varphi(t_m)-b)}{t_{m+1}-t_m}
	\geq\frac{2}{|J|}a.
\end{align}
Note that this condition on $\vp'$ is similar to our assumption \eqref{signs2}  on $\vp$, but now with $b=0$ and $a$ replaced by $2a/|J|.$ We can thus iterate this procedure. Formally, the following can be verified by induction:
\begin{align}
	(-1)^{m+k} \Big (\varphi^{(k)}(t_m^k)-b^{(k)}\Big)
	\geq& \left(\frac{2}{|J|}\right)^ka 
\end{align}
for certain $t_0^k,\ldots,t_{r-k}^k\in J$, and where $b^{(0)}=b$, $b^{(k)}=0$ for $k>0$. We can keep iterating as long as we can differentiate. Thus, finally we arrive for  $k=r$ at the estimate
\begin{align}
	\|\varphi^{(r)}\|_\infty \geq (-1)^{m+r} \varphi^{(r)}(t_0^r)
	\geq& \left(\frac{2}{|J|}\right)^r a ,
\end{align}
which gives \eqref{trulala}.
\end{proof}

\medskip

\noindent {\it Proof of Theorem \ref{levelset}.} 
We shall  show how to decompose the set $\{\varphi>0\}$  (assuming this set is non-empty); the set $\{\varphi<0\}$  can then be treated in a similar way. Denote by $J_l, \, l\in L,$  the connected component intervals of $\{\varphi>0\}$ ($L$ is at most countable). If $J_l$ does not intersect the boundary of $I,$ then we may write $J_l=(a_l,b_l)$, with $a_l<b_l.$ Otherwise, if $a\in J_l,$ then $\vp(a)>0,$ and  we may write $J_l=[a_l,b_l)=[a,b_l).$ Similarly, if $b\in J_l,$ then $\vp(b)>0,$ and  we may write  $J_l=(a_l,b_l]=(a_l,b].$ 
\smallskip

In a second step,  given $l\in L, $ we  shall further decompose the interval $J_l.$ To this end,  we choose a point $c_0^l\in J_l$  so that $\varphi(c^l_0)$ is a dyadic number. This is always possible, unless  $J_l$ intersects the boundary and there is some dyadic number $\la$ such that $\la<\vp(x)<2\la$ for every $x\in J_l.$ But then $J_l$ does already have the desired property, and since there are at most two  intervals $J_l$ of that type, we may ignore these cases in the sequel. Our aim is to construct subintervals of $J_l$ which are as large as possible  so that  $\varphi$ will change by a dyadic number on them.
\smallskip

Accordingly, we choose  $c_1^l>c^l_0$ maximal in $(a_l, b_l]$ such that $\varphi(c_{1}^l)\in[\tfrac12\varphi(c^l_0),2\varphi(c^l_0)]$. 
Then, recursively, if we have already constructed $c_{0}^l,\ldots,c_k^l$,  we choose $c_{k+1}^l:=\max\{x\in(c_k^l,b_l]:\tfrac12\varphi(c_k^l)\leq\varphi(x)\leq2\varphi(c_k^l)\}$, so that $\varphi(c_{k+1}^l)\in\{\tfrac12\varphi(c^l_k),2\varphi(c^l_k)\}$, or $c^l_{k+1}=b$. The latter case can  only happen if $J_l=(a_l,b]$ and $\varphi(b)>0$, in which case $\varphi(b)$ may or may not be a dyadic number. If $c^l_{k+1}=b$, we stop the construction; otherwise, we get a countable sequence of 
points $c_k^l, k\in \N.$  We claim that in the latter case,  $\lim\limits_{k\to\infty}c_k^l=b_l$: 

Obviously $c_k^l$, $k\to\infty$, is a convergent sequence, on which $\varphi$ assumes  only dyadic values, and by construction we have 
\begin{align}\label{16oct1129}
	\max\{\varphi(c_k^l),\varphi(c_{k+1}^l)\}\leq 2|\varphi(c_k^l)-\varphi(c_{k+1}^l)| \leq 2\|\varphi'\|_\infty|c_k^l-c^l_{k+1}|.
\end{align}
Therefore $\lim\limits_{k\to\infty}\varphi(c_k^l)=0$, which implies the claim. 

We construct points $c_k^l$ with indices  $k<0$ and $c_0^l>c_{-1}^l >c_{-2}^l>\cdots$ in an analogous manner  by ''moving to the left'', and obtain that$\lim\limits_{k\to-\infty}c_k^l=a_l$ in case where we do not reach $a_l=a$ in finitely many steps. In this way we obtain a measure-theoretic disjoint decomposition
$$
J_l=\bigcup\limits_{k\in Z^l}[c_k^l,c^l_{k+1}],
$$
where $Z^l\subset \Z$  is of the form $Z^l=\{k\in \Z: K^l_-< k< K^l_+\},$  $-\infty\le K^l_-<K^l_+\le +\infty.$

Recall that  then $\varphi(c_k^l)$ and $\varphi(c^l_{k+1})$ are dyadic numbers which differ exactly by a factor 2, with the  possible  exception of  the cases  where $c_k^l=a,$ or $c^l_{k+1}=b$.

We sort the points $c_k^l$ according to which dyadic value $\varphi(c_k^l) $ takes. For bounding the number of component intervals $[c_k^l,c^l_{k+1}],$  we may ignore the intervals at the boundary of $I$ containing $a,$ respectively $b,$ since there are at most two of them. For any dyadic parameter $\lambda>0$, let 
\begin{align}\label{alambda}
	A(\lambda):=\big\{c_k^l: \{\varphi(c_k^l),\varphi(c_{k+1}^l)\}=\{\lambda,2\lambda\}\big\}.
\end{align}
Note that by construction, for any $c_k^l\in A(\lambda)$ and $c_k^l<x<c_{k+1}^l,$ we have 
\begin{align}
	\lambda/2<\varphi(x)<4\lambda.
\end{align}

Therefore the sets $I_{\lambda,\iota}:=[c_k^l,c_{k+1}^l)$, indexed by $\iota=c_k^l\in A(\lambda)$, together with a similar decomposition of $\{\varphi<0\}$ and possibly two boundary intervals, give  the desired decomposition. 

For the bound in (i), it is then enough to show that
\begin{align}
	|A(\lambda)|\leq4r\big(1+|I|C_r^{1/r} \lambda^{-1/r}\big).
\end{align}

Using \eqref{16oct1129}, we see that for $c_k^l\in A(\lambda)$ we have
$$\lambda\leq 2\|\varphi'\|_\infty|c_k^l-c^l_{k+1}|.
$$
This implies that $A(\lambda)$ is finite (and in particular, that the number of connected components $J_l$ of $I$ is indeed countable). More precisely, we obtain the bound 
$$|A(\lambda)|\leq 2\|\varphi'\|_\infty |I|\lambda^{-1},$$
which already gives the bound in (i) when  $r=1$. 
\medskip

When $r\ge 2,$ we have to make use of  estimates of the number of oscillations of certain sizes of the  function $\vp$ and its derivatives  and apply Lemma \ref{iteratelemma}. To this end, 
 we fix  now a dyadic parameter $\lambda,$ and order the  elements of $A(\lambda)$ by $y_1<\cdots<y_M,$
where $M:=|A(\lambda)|.$

We claim that we have 
\begin{align}\label{claim1}
	\{\varphi(y_m),\varphi(y_{m+1})\}=\{\lambda,2\lambda\} \qquad\text{ for all $m=1,\ldots,M-1$}.
\end{align}
We know by \eqref{alambda} that the  $\varphi(y_m)$ can only take the values $\lambda,$ or $2\lambda$.
Given $y_m=c_k^l$ for some $k$ and $l$, we assume without loss of generality  that $\varphi(y_m)=2\lambda,$ and hence $\varphi(c_{k+1}^l)=\lambda$ (the case where $\varphi(y_m)=\lambda$  can be treated in a similar way). 

 If we had  $\varphi(y_{m+1})=2\lambda$, then we  would choose $c_{k'}^{l'}> c_{k+1}^l$ minimal such that $\varphi(c_{k'}^{l'})\geq2\lambda$. Then necessarily  $\varphi(c_{k'-1}^{l'})<2\lambda,$ and since this  is a dyadic number, even $\varphi(c_{k'-1}^{l'})\leq\lambda$. But then we would have
$$2\lambda\leq \varphi(c_{k'}^{l'}) \leq 2\varphi(c_{k'-1}^{l'}) \leq 2\lambda,$$
so that all the above inequalities must be equalities and hence $c_{k'-1}^{l'}\in A(\lambda)$ while $y_m<c^{l'}_{k'-1}<c^{l'}_{k'}\leq y_{m+1}$,  where the three  points $y_m, c^{l'}_{k'-1}$ and $y_{m+1}$  are in
 $A(\la).$ This would lead to a contradiction.
Therefore $\varphi(y_{m+1})=\lambda$, which verifies \eqref{claim1}.
\medskip

Let us assume for the sake of concreteness that, say, $\varphi(y_1)=\lambda$ (the proof  will be quite similar when $\varphi(y_1)=2\lambda$). Then it is easy to see that \eqref{claim1} implies that for every $m$ 
\begin{align*}
	(-1)^m\Big(\varphi(y_m)-\frac32\lambda\Big)=\frac \la 2.
\end{align*}

 In the next step, we organize our sequence $\{y_m\}$ into groups of $r+1$ points: We may assume without loss of generality that $M=|A(\lambda)|\geq4r$. Let 
 $$
 I_j:=[y_{j+1},y_{j+r+1}], \qquad j\in(r+1)\N, \  \frac{j}{r+1}\leq\frac{M}{r+1}-1.
 $$
 This  means  we have at least $\frac M{r+1}-1\geq\frac{M}{4r}$ pairwise disjoint intervals $I_j$ such that in each $I_j$ we have points $y_{j+1+m},\, m=0,\ldots, r,$ with
\begin{equation*}%\label{signs1}
(-1)^{m+j+1}\Big(\varphi(y_{j+1+m})-\frac32\lambda\Big)=\frac \la 2.
\end{equation*}
	We can thus apply Lemma \ref{iteratelemma} and find that
$$
 |I_j|\geq 2\left(\frac{\lambda}{2C_r}\right)^{1/r}\geq \left(\frac{\lambda}{C_r}\right)^{1/r}.
$$
We conclude that
$$
|I|\geq \sum_j|I_j| \geq \frac{M}{4r}\left(\frac{\lambda}{C_r}\right)^{1/r},
$$
and hence 
$$M\leq 4r|I|C_r^{1/r}\lambda^{-1/r}.
$$
\qed

%%%%%%%%%%%%%%%%%%%%%%%%%%%%%%

\setcounter{equation}{0}
\section{Fourier restriction over rectangles on which $h'''\sim \la$}\label{h'''simla}
In the sequel, we shall always assume that $h$ is flat at the origin. By applying Theorem \ref{levelset} to $\vp:=h''',$  we are led to first restricting ourselves to intervals $I_{\la,\iota}$ on which we may assume that $h'''(x)\sim \la,$ where $\la>0$ is a fixed dyadic number. Note also that by choosing $\Om$ sufficiently small in Theorem \ref{mainresult}, we may assume that 
\begin{equation}\label{la1}
0<\la <\la_1, \quad\text{where}\ \la_1\ll 1\quad \text{is sufficiently small}.
\end{equation}
Let us therefore now assume that on some interval $I=(c_I-d_I,c_I+d_I)$ of positive length $2d_I>0$ contained in $(-1,1)$ we know that 
$$
h'''(y) \sim \la.
$$
Then a standard change of coordinates argument (cf. \cite{bmv19}) allows to pass essentially to a situation where $c_I=0,d_I=1,$ and where \eqref{vanish} holds true again. Indeed, write $y=c_I+d_Iy',$ and consider $\phi(x,y)=xy+h(y).$ Changing coordinates $y=c_I+d_Iy',$  a Taylor expansion of $h$ then leads to 
\begin{eqnarray*}
\phi(x,y)= x(c_I+d_Iy') +h(c_I)+h'(c_I) d_I y'+ \frac{h''(c_I)}2 d_I^2 (y')^2 + d_I H(y'),
\end{eqnarray*}
where $H^{(k)}(0)=0$ for $k=0,1,2,$ and $H'''(y')\sim d_I^2\la=:\epsilon.$ Thus
\begin{eqnarray*}
\phi(x,y)/d_I&=& y'(x+\frac{h''(c_I)}2 d_I y') +H(y')+ \text{affine linear terms}\\
 &=&x'y'+H(y')+ \text{affine linear terms},
\end{eqnarray*}
if we put $x':=x+\frac{h''(c_I)}2 d_I y'.$  Let us therefore define $\tilde \phi(x',y'):=x'y'+H(y').$ Then $H'''(y')\sim \epsilon,$ where $0<\epsilon\ll 1.$ 
Thus, if we  put $\Om_I:=\Om\cap\{y\in I\}$ and denote by 
$$
\ext_I f(\xi)=\int_{\Om_I} f(x,y) e^{-i(\xi_1 x+\xi_2 y+\xi_3\phi(x,y))} \eta(x,y) \, dx dy
$$
 the contribution of the $y-$ Interval $I$ to  to $\ext f,$ and define correspondingly
$$
\tilde \ext_I f(\xi)=\int_{(-1,1)^2} f(x',y') e^{-i(\xi_1 x'+\xi_2 y'+\xi_3\tilde\phi(x',y'))} \tilde\eta(x',y') \, dx' dy',
$$
with a suitable cutoff function $\tilde \eta,$ 
then an  easy scaling argument shows  that  the following estimates for $\ext_I$ and
$\tilde\ext_I$ are equivalent:
\begin{equation}\label{extI1}
\|\tilde\ext_I f\|_{L^r}\le C \|f\|_{L^q};
\end{equation}
\begin{equation}\label{extI2}
\|\ext_I g\|_{L^r}\le C d_I^{1-2/r-1/q} \|g\|_{L^q}
\end{equation}
for all g with $\supp g\subset I $ (and support in $x$ sufficiently small).

 It will therefore suffice to prove  estimate \eqref{extI1}, and to this end recall that  $H(0)=H'(0)=H''(0)=0$ and $H'''(y)\sim \epsilon\ll1.$ Taylor expansions then show that $\|H\|_{C^2}\lesssim \epsilon.$  We may thus re-write $H=\epsilon h,$  i.e., 
 \begin{equation}\label{phicub}
\phi(x,y)=xy+\epsilon h(y), \quad  \epsilon\ll1 \qquad ((x,y)\in Q:=[-1-1]^2),
\end{equation}
 with a new function $h$ satisfying the following conditions: 
 \begin{equation}\label{cubtype}
 h(0)=h'(0)=h''(0)=0 \quad \text{and}\quad  {C_3}/4\le h'''(y)\le C_3 \quad \text{for all}\  |y|\le 1,
\end{equation}
where the constant $C_3$ will be  assumed to be fixed constant $C_3>0.$ Note that \eqref{cubtype} implies that 
$$
\|h\|_{C^2}\lesssim C_3.
$$

We like to stress the point that we cannot assume any reasonable control on derivatives of order $4$ or higher of $h.$ We therefore call such a function  $h$ of $y$  a  perturbation  function {\it  of coarse  cubic type} of the  phase $xy,$ in contrast to the notion of a perturbation of cubic type defined by (3.1) in  \cite{bmv19}, where we had in addition also required a suitable control on higher order derivatives.
\smallskip

A major goal of this paper will be to prove the following uniform restriction estimate for perturbations of coarse cubic type  of the parabolic hyperboloid: 
\begin{thm}\label{cubrest}
Assume that $\phi$ is given by \eqref{phicub} on the cube $Q,$ where $h(y)$ is a perturbation of coarse cubic type,  let  $S$ be given as the graph of $\phi.$  By $\ext$ we denote  again the Fourier extension operator associated to $S.$ 
Assume further that $r>10/3$ and  $1/q'>2/r.$ Then
$$
\|\ext f\|_{L^r(\R^3)} \leq C_{r,q} \|f\|_{L^q(Q)} \qquad\text{for all} \ f\in L^q(Q),
$$
where the constant $C_{r,q}$ may depend on the constant $C_3$ in \eqref{cubtype}, but not on any further property of $h,$ and not on $\epsilon.$ 
\end{thm}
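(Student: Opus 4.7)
The plan is to adapt the bilinear Fourier extension framework from \cite{bmv17, bmv19}, exploiting that $\phi(x,y)=xy+\eps h(y)$ is a small perturbation ($\eps\ll1$) of the model hyperbolic paraboloid $xy$, with $h'''\sim C_3$ but no quantitative control on higher derivatives of $h$. Two genuine new difficulties must be faced compared to the finite-type situation treated in \cite{bmv19}: the wave packets of $\ext f$ will only have polynomial (not Schwartz) decay away from their central tubes, and the passage from bilinear to linear estimates, which in the finite-type case proceeded by direct dyadic summation, will have to be replaced by a bootstrap.

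\textbf{Step 1: Bilinear reduction.} Since $\phi_{xy}\equiv 1$ while $\phi_{yy}=\eps h''(y)$ is small, the Gauss map of $S$ is essentially governed by the $y$-coordinate. I would perform a Whitney-type decomposition of $Q\times Q$ into pairs $(\tau,\tau')$ of sub-rectangles of comparable dyadic diameter in $y$ that are transversal at the same scale, matched to the ruled structure of the hyperbolic paraboloid as in the works of Lee and Vargas. Following the Tao--Vargas--Vega scheme, the desired linear bound for $r>10/3$, $1/q'>2/r$ reduces, up to an $R^{o(1)}$ loss at frequency scale $R$, to the family of bilinear estimates
\begin{equation*}
\|\ext_\tau f\,\ext_{\tau'} g\|_{L^{r/2}(\R^3)} \lesssim \|f\|_{L^q(\tau)}\,\|g\|_{L^q(\tau')}
\end{equation*}
over transversal pairs, with a constant uniform in $\eps$ and in the particular $h$ of coarse cubic type.

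\textbf{Step 2: Bilinear estimate via slowly decaying wave packets.} At scale $R$, I would decompose $\ext_\tau f\approx\sum_{T\in\cT_\tau}\ext f_T$, with each tube $T$ of length $R$ and cross-section $R^{1/2}$ oriented along the normal at the centre of $\tau$. The main new obstacle is that, because we can integrate by parts in $y$ only finitely many times, each wave packet has only polynomial decay $\sim 2^{-Nk}$ on shells at distance $2^k R^{1/2}$ from $T$. I would foliate the complement of each central tube by ``hollow tubes'' $T^{(k)}$, track the contribution of $\ext f_T$ on $T^{(k)}$ with the weight $2^{-Nk}$, and show that the standard $L^2$-orthogonality and tube-incidence bounds of the classical Tao bilinear argument extend, up to summable polynomial losses, to this enlarged family. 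Once this is available, the transversality-driven concentration of $|\ext_\tau f\,\ext_{\tau'} g|$ on an effectively two-dimensional set yields an $L^2\times L^2\to L^2$ bilinear estimate with $R^{o(1)}$ loss; interpolating with the trivial $L^\infty$ bound then gives the bilinear $L^{r/2}$ bound for all $r>10/3$.

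\textbf{Step 3: Bilinear-to-linear bootstrap.} To pass from the bilinear estimate back to the linear estimate of Theorem \ref{cubrest} without the usual dyadic summation, I would set up the bootstrap of Section \ref{bilinlin}: letting $A$ be the best constant in the desired linear inequality (viewed as a function of the admissible data), combine the bilinear bound with a refined analysis of the diagonal pairs $(\tau,\tau)$. At this stage the monotonicity of $h'''$ is crucial, as it allows one to split the near-diagonal region into sub-pieces on which the rescaled operator $\tilde\ext_I$ is handled recursively via the rescaling equivalence \eqref{extI1}--\eqref{extI2}. The outcome should be an inequality of the form $A\leq \tfrac12 A + C_{r,q}$, closing the bootstrap and yielding an $\eps$-independent bound on $A$. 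I expect this last step, and in particular the interplay between the hollow-tube machinery of Step 2 and the monotonicity hypothesis on $h'''$, to be the main technical obstacle of the proof.
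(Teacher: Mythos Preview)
Your Steps 1 and 2 capture the paper's strategy accurately: the proof does proceed by a Whitney-type decomposition of $Q\times Q$ into admissible pairs (following the structure set up in \cite{bmv19}), and the key new ingredient is precisely the bilinear estimate established via wave packets with only polynomial decay, controlled through the hollow-tube machinery you describe. (The paper's tubes have radius $R$ and length $R^2/\kappa$ rather than the $R^{1/2}\times R$ scaling you quote, but this is a normalization issue, not a substantive one.)

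Your Step 3, however, misidentifies where the difficulty lies. The bootstrap of Section~\ref{bilinlin} and the monotonicity of $h'''$ are \emph{not} used in proving Theorem~\ref{cubrest}; they enter only in the proof of Theorem~\ref{mainresult}, where one must assemble the contributions of infinitely many intervals $I_{\lambda,\iota}$ on which $h'''\sim\lambda$ with $\lambda$ ranging over all dyadic scales. In Theorem~\ref{cubrest} the hypothesis \eqref{cubtype} already pins down $h'''\sim C_3$ on all of $[-1,1]$, so no such summation arises; monotonicity of $h'''$ is not even part of the hypotheses (only $C_3/4\le h'''\le C_3$), and the rescaling \eqref{extI1}--\eqref{extI2} is irrelevant here since it is exactly what reduces the $I_{\lambda,\iota}$-situation to the normalized setting of Theorem~\ref{cubrest}. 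The paper's bilinear-to-linear passage for Theorem~\ref{cubrest} is instead the standard one: as stated right after Theorem~\ref{bilinear2}, one follows Section~5 of \cite{bmv19} verbatim, the point being that those arguments require control only on the first three derivatives of the perturbation and hence carry over to the coarse cubic setting unchanged. Your ``second genuine new difficulty'' thus belongs to Theorem~\ref{mainresult}, not to the statement under consideration.
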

The proof will be based on a modification of  the bilinear method, taking into account the slow decay of wave packets that we are available only in this context. We shall therefore largely follow our approach from  \cite{bmv19} only mostly concentrate on those points of the arguments which will require new ideas.
  \medskip

\subsection{Admissible pairs of sets $U_1,$ $U_2$ on which transversalities are of a fixed
size}\label{pairs of sets}

Recall  that the bilinear approach is based on  bilinear estimates of the
form
\begin{align}\label{bil1}
 \|\ext_{U_1}(f_1)\,\ext_{U_2}(f_2)\|_p \leq C(U_1,U_2) \|f_1\|_2\|f_2\|_2.
\end{align}
 Here,  $\ext_{U_1}$ and $\ext_{U_2}$ are the Fourier extension operators associated to
 patches of sub-surfaces $S_i:=\graph \phi|_{U_i}\subset S,\ i=1,2,$  with $U_i\subset \Om.$
 What is crucial for obtaining  useful bilinear estimates is that the two  patches of
 surface $S_1$ and $S_2$ satisfy certain {\it transversality conditions,} which  are
 stronger than  just assuming that $S_1$ and $S_2$ are transversal as hypersurfaces (i.e.,
 that all normals to $S_1$ are transversal to all normals to $S_2$). Indeed, what  is
 needed in addition is the following (cf. \cite {bmv17},\cite{lee05}, \cite{v05}, \cite{lv10}, or \cite{be16}):
 \smallskip

Denoting by  $H\phi$ the Hessian of $\phi,$ we consider the following quantity
\begin{align}\label{transs}
\tilde\Gamma^\phi_{z}(z_1,z_2,z_1',z_2'):=	\left\langle
(H\phi)^{-1}(z)(\nabla\phi(z_2)-\nabla\phi(z_1)),\nabla\phi(z_2')-\nabla\phi(z_1')\right\rangle.
\end{align}
 If its modulus  is  bounded from below by a constant $c>0$  for all $z_i=(x_i,y_i),\, z_i'=(x_i',y_i')\in U_i$, $i=1,2$, $z=(x,y)\in U_1\cup U_2,$ then we have \eqref{bil1} for $p>5/3,$ with a constant $C(U_1,U_2)$ that depends only on this constant  $c$  and on upper bounds for the derivatives of $\phi.$  If $U_1$ and
$U_2$ are sufficiently small (with sizes depending on upper bounds of the first and second
order derivatives of $\phi$ and a lower bound for the determinant of $H\phi$) this
condition  reduces to the estimate
\begin{align}
|\Gamma^\phi_{z}(z_1,z_2)|\geq c,
\end{align}
for $z_i=(x_i,y_i)\in U_i$, $i=1,2$, $z=(x,y)\in U_1\cup U_2$, where
\begin{align}\label{trans}
\Gamma^\phi_{z}(z_1,z_2):=	\left\langle
(H\phi)^{-1}(z)(\nabla\phi(z_2)-\nabla\phi(z_1)),\nabla\phi(z_2)-\nabla\phi(z_1)\right\rangle.
\end{align}
It is easy to check that for $\phi(x,y)=xy+\epsilon h(y)$, we have
\begin{eqnarray} \label{gammaz}
\Gamma^\phi_{z}(z_1,z_2)
	 &=:& 2(y_2-y_1)\,\tau_{z}(z_1,z_2),
\end{eqnarray}
where
\begin{equation}\label{TV1}
\tau_{z}(z_1,z_2):=x_2-x_1+\epsilon[h'(y_2)-h'(y_1)-\frac 12 h''(y)(y_2-y_1)].
\end{equation}
As in \cite{bmv19}, it will be particularly important to look at the
expression
\eqref{TV1} when $z=z_1\in U_1,$ and $z=z_2\in U_2,$  so that the two ``transversalities''
\begin{align}\label{TV2}
	\tau_{z_1}(z_1,z_2)=x_2-x_1+\epsilon[(h'(y_2)-h'(y_1)-\frac 12 h''(y_1)(y_2-y_1)]\\
	\tau_{z_2}(z_1,z_2)=x_2-x_1+\epsilon[(h'(y_2)-h'(y_1)-\frac 12 h''(y_2)(y_2-y_1)] \label{TV3}
\end{align}
become relevant.
Note  the following relation  between these quantities:
\begin{align}\label{TV2+TV3}
|\tau_{z_1}(z_1,z_2)-\tau_{z_2}(z_1,z_2)|&=\frac \epsilon 2 |h''(y_2)-h''(y_1)||y_2-y_1|\nonumber
\sim \eps  |h'''(\eta)|(y_2-y_1)^2\\
&\sim \eps (y_2-y_1)^2,
\end{align}
where $\eta$ is some intermediate point.
\medskip

Following Section 3 in \cite{bmv19}, we shall try to devise neighborhoods $U_1$ and $U_2$ of two given points
 $z_1^0=(x_1^0,y_1^0)$ and $z_2^0=(x_2^0,y_2^0)$ on which these quantities are roughly
  constant for $z_i=(x_i,y_i)\in U_i,$ $i=1,2$,
 and which  are also  essentially chosen as large as possible. The corresponding pair $(U_1,U_2)$ of
 neighborhoods of $z^0_1$ respectively $z^0_2$ will be called an {\it admissible pair}. For a motivation of the precise definition of admissible pairs that we shall give in the next subsection, we refer to \cite{bmv19}.

\subsection{Definition of admissible pairs within $Q\times Q$}\label{preciseadmissible}

To begin with,  we fix a large dyadic constant $C_0\gg1.$

\smallskip
In  a first step,  we   perform a  classical {\bf dyadic decomposition in the
$y$-variable}:

For a given  dyadic number $0<\rho\lesssim 1,$  we denote for
$j\in\mathbb Z$ such that $|j|\rho\le 1$  by $I_{j,\rho}$ the dyadic interval
$I_{j,\rho}:=[j\rho,j\rho+\rho)$ of length $\rho,$ and by $V_{j,\rho}$ the corresponding
horizontal ``strip''   $V_{j,\rho}:=[-1,1]\times I_{j,\rho}$ within $Q.$
Given two dyadic intervals $J,\,J'$ of the same size, we say that they are
{\it related} if their parents are adjacent but they are not adjacent. We divide each
dyadic interval $J$ in a disjoint union of dyadic subintervals $\{I_J^k\}_{1\le k\le
C_0/8},$ of length  $8|J|/C_0.$  Then, we define $(I,I')$ to be an \it
admissible pair of dyadic intervals \rm if and only if there are $J$ and $J'$ related
dyadic intervals and $1\le k,\,j\le C_0/8$ such that $I=I_J^k$ and $I'=I_{J'}^j.$

We say that a pair of strips $(V_{j_1,\rho},V_{j_2,\rho})$ is {\it admissible } and  write
$V_{j_1,\rho}\backsim V_{j_2,\rho},$ if  $(I_{j_1,\rho},I_{j_2,\rho})$ is a pair of
admissible dyadic intervals. Notice that in this case,
  \begin{align}\label{admissibleV}
C_0/8< |j_2-j_1|< C_0/2.
\end{align}
One can  easily see that  this leads to the following disjoint decomposition of $Q\times
Q:$
\begin{align}\label{whitney1}
Q\times Q= \overset{\cdot}{\bigcup\limits_{\rho}}
\,\Big(\overset{\cdot}{\bigcup\limits_{V_{j_1,\rho}\backsim
V_{j_2,\rho}}}V_{j_1,\rho}\times V_{j_2,\rho}\Big),
\end{align}
where the first union is meant to be over all such dyadic $\rho$'s.

\medskip
In a second step, we perform a non-standard {\bf Whitney type decomposition of any given
admissible pair of strips}, to obtain subregions in which the transversalities are roughly
constant.

To simplify notation, we fix $\rho$ and an admissible pair  $(V_{j_1,\rho},V_{j_2,\rho}),$
and simply write $I_i:=I_{j_i,\rho},\, V_i:=V_{j_i,\rho}, \, i=1,2,$
so that $I_i$ is an interval of length $\rho$ with left endpoint  $j_i\rho,$   and
\begin{align}\label{Vi}
V_1=[-1,1]\times I_1, \qquad V_2=[-1,1]\times I_2,
\end{align}
are rectangles of dimension $2\times \rho,$ which are vertically  separated at scale
$C_0\rho.$
More precisely, for $z_1=(x_1,y_1)\in V_1$ and $z_2=(x_2,y_2)\in V_2$ we have
$|y_2-y_1|\in
|j_2\rho-j_1\rho|+[-\rho,\rho],$ i.e.,
\begin{align}\label{yseparation}
	C_0\rho/2\le |y_2-y_1|\le  C_0\rho.
\end{align}

Let $0<\delta\lesssim \eps^{-1}\rho^{-2}$ be a dyadic number (note that $\delta$ could be big,
depending on $\rho$), and  let $\J$ be  the set of points
which partition the interval $[-1,1]$ into (dyadic) intervals of the same length  $\eps\rho^2\de.$

\smallskip
Similarly, for $i=1,2,$ we choose a finite  equidistant partition $\I_i$  of  width
$\rho(1\wedge\delta)$  of  the interval $I_i$  by  points $y_i^0\in \I_i.$
Note: if $\de>1,$ then $\rho(1\wedge\delta)=\rho,$ and we can choose for $\I_i$ just
 the
 singleton  $\I_i=\{y_i^0\},$ {where $y_i^0$ is the left endpoint of $I_i.$}

\begin{defn}
For any parameters $x^0_1,t^0_2\in\J,$  $y^0_1\in\I_1$  defined in the previous lines and $y^0_2$ the left
endpoint of $I_2,$ we  define the
sets
\begin{align}
U_1^{x^0_1,y_1^0,\delta} :=\{(x_1,y_1)&:0\le y_1-y_1^0< \rho(1\wedge\delta),\,
0\le x_1-x^0_1+\eps\tfrac{h''(y_1^0)}{2}(y_1-y_1^0)< \eps\rho^2\delta \}, \nonumber\\
\label{whitneybox1}&\\
U_2^{t^0_2,y_1^0,y^0_2,\delta} :=\{(x_2,y_2)&:0\le y_2-y^0_2<\rho, \nonumber\\&\quad0\le x_2-t^0_2+\eps[h'(y_2)-h'(y_1^0)-\tfrac{h''(y_1^0)}{2}(y_2-y_1^0) ]<
\eps\rho^2\delta\},\nonumber%\label{whitneybox2}
\end{align}
and the points
\begin{equation}\label{pointsinU}
z^0_1=(x^0_1,y^0_1), \qquad z^0_2=(x^0_2,y^0_2)
\end{equation}
where \begin{equation*}
x_2^0:=t^0_2-\eps[h'(y^0_2)-h'(y_1^0)-\tfrac{h''(y_1^0)}{2}(y^0_2-y_1^0) ].
\end{equation*}
\end{defn}

Observe that then
$$
z^0_1\in U_1^{x^0_1,y_1^0,\delta}\subset V_1\quad \text{  and   } \quad z^0_2\in
U_2^{t^0_2,y_1^0,y^0_2,\delta}\subset V_2.
$$
Indeed, $z^0_i$ is in some sense the ``lower left'' vertex of $U_i,$ and the horizontal projection of
$U_2^{t^0_2,y_1^0,y^0_2,\delta}$ equals $I_2.$
Note that if we define $a^0$ by 
\begin{align}\label{a0defi}
	a^0:=\tau_{z_1^0}(z_1^0,z_2^0),
\end{align} 
then $x_1^0+a^0=t^0_2.$ 
Notice also that we may re-write
\begin{equation}\label{U2alt}
U_2^{t_2^0,y_1^0,y_2^0,\delta}=\{z_2=(x_2,y_2): 0\le\tau_{z_1^0}(z_1^0,z_2)-a^0<\eps\rho^2\delta, \ 0\le y_2-y_2^0<\rho\}.
\end{equation}
In particular, $U_1^{x^0_1,y_1^0,\delta}$ is   essentially a parallelepiped of side lengths $\sim
 \eps\rho^2\de\times \rho(1\wedge \de),$ containing the point $(x^0_1,y_1^0),$  whose
longer side has slope $y_1^0$ with respect to the $y$-axis.

Moreover, if $\de\ll 1,$ then $U_2^{t^0_2,y_1^0,y^0_2\delta}$ is a thin curved box of width $\sim\eps\rho^2 \de$ and
length $\sim\rho,$  contained in a rectangle of dimension $\sim \rho^2\times \rho$ whose axes are
parallel to the coordinate axes (namely the part of a $\rho^2\delta$-neighborhood of a curve  of curvature $\sim\eps$ containing the
point $(x^0_1,y^0_1)$ which lies  within the horizontal strip  $V_2$).  The case $\de\ll 1$ will  therefore be called the {\bf curved box case}.

  If $\de\gtrsim 1,$ then
$U_2^{t^0_2,y_1^0,y^0_2\delta}$ is essentially  a rectangular  box of dimension $\sim \eps\rho^2\de\times \rho$ lying
in the same horizontal strip. The case $\de> 1$ will  therefore be called the {\bf straight  box case}.
\smallskip

Note also that we have chosen to use  the parameter $t^0_2$ in place of using $x^0_2$ here, since with this
choice by \eqref{TV1} the identity
\begin{equation}\label{t2mean}
\tau_{z_1^0}(z_1^0,z_2^0)=t^0_2-x_1^0
\end{equation}
holds true, which will become quite useful in the sequel. We next have to relate the parameters $x_1^0,t^0_2,
y_1^0,y_2^0$ in order to give a precise definition of an admissible pair.

\smallskip
 Here, and in the sequel, we shall always assume that the points $z_1^0,z_2^0$ associated to these parameters
 are given by \eqref{pointsinU}.

\smallskip

\begin{defn}
Let us call a pair
 $(U_1^{x_1^0,y_1^0,\delta},U_2^{t_2^0,y_1^0,y_2^0,\delta})$ an {\it admissible
 pair of type 1
 (at scales $\de,\;\rho$ and contained in $V_1\times V_2$),} if the following two conditions hold
 true:
\begin{align}\label{admissible1}
\frac {C_0^2}4\eps\rho^2\de\le
|\tau_{z_1^0}(z_1^0,z_2^0)|&=|t_2^0-x_1^0|<4 \,C_0^2\eps\rho^2\de,\\
\frac{C_0^2}{512}\eps\rho^2(1\vee
\de)\le|\tau_{z_2^0}(z_1^0,z_2^0)|&< 5\,
C_0^2\eps\rho^2(1\vee \de).\label{admissible2}
\end{align}
By $\cP^{\de}$ we shall denote the set of all admissible pairs of
type 1 at scale $\de$ (and  $\rho$, contained in $V_1\times V_2,$), and by $\cP$
the  corresponding union over all dyadic
scales $\de.$
\end{defn}

Observe that, by \eqref{TV2+TV3}, we have
$\tau_{z_2^0}(z_1^0,z_2^0)-\tau_{z_1^0}(z_1^0,z_2^0)\sim\eps(y_2^0-y_1^0)^2.$
In view of   \eqref{admissible1} and \eqref{yseparation} this shows that condition
\eqref{admissible2} is automatically satisfied, unless $\de\sim 1.$

We remark that it would indeed be more appropriate to denote the sets $\cP^{\de}$ by $\cP^{\de}_{V_1\times V_2},$  but we
want to simplify the notation. In all instances in the rest of the paper $\cP^{\de}$ will be  associated  to a
fixed admissible pair of strips $(V_1,V_2),$ so  that our imprecision will not cause any ambiguity.
The next lemma can be proved by closely following the arguments in the proof of the corresponding Lemma 2.1 in \cite{bmv17} and just using \eqref{cubtype}.
\color{black}

\begin{lemma}\label{sizeofdeltas}
If  $(U_1^{x_1^0,y_1^0,\delta},U_2^{t_2^0,y_1^0,y_2^0,\delta})$ is an admissible
pair of type 1, then for
all  $(z_1,z_2)\in (U_1^{x_1^0,y_1^0,\delta},U_2^{t_2^0,y_1^0,y_2^0,\delta})$ ,
$$
|\tau_{z_1}(z_1,z_2)|\sim_{8} C_0^2 \eps\rho^2\de\mbox{   and   }
|\tau_{z_2}(z_1,z_2)|\sim_{1000} C_0^2 \eps\rho^2(1\vee\de).
$$
\end{lemma}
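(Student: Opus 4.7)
The plan is to follow the argument of Lemma~2.1 in \cite{bmv17} almost verbatim, observing that every estimate there uses only the $C^2$ and $C^3$ bounds on $h$, which are exactly what \eqref{cubtype} supplies; no higher-order control on $h$ is ever invoked. This is the key point that makes the finite-type machinery of \cite{bmv17} applicable in the present coarse cubic setting. For the first estimate I would telescope
$$\tau_{z_1}(z_1,z_2)-\tau_{z_1^0}(z_1^0,z_2^0)=A_1+A_2+A_3,$$
with $A_3:=\tau_{z_1^0}(z_1^0,z_2)-\tau_{z_1^0}(z_1^0,z_2^0)$, $A_2:=\tau_{z_1^0}(z_1,z_2)-\tau_{z_1^0}(z_1^0,z_2)$, and $A_1:=\tau_{z_1}(z_1,z_2)-\tau_{z_1^0}(z_1,z_2)$, and show that each is $\lesssim_{C_3}\eps\rho^2\de$.

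For $A_3$ the bound $|A_3|<\eps\rho^2\de$ is immediate from the alternative description \eqref{U2alt} of $U_2^{t_2^0,y_1^0,y_2^0,\de}$. For $A_2$, a Taylor expansion of $h'(y_1)$ about $y_1^0$ to second order yields the identity
$$A_2=-\Bigl(x_1-x_1^0+\tfrac{\eps}{2}h''(y_1^0)(y_1-y_1^0)\Bigr)-\tfrac{\eps}{2}h'''(\eta)(y_1-y_1^0)^2,$$
so the defining inequality of $U_1^{x_1^0,y_1^0,\de}$, combined with $\|h'''\|_\infty\le C_3$ and the elementary observation $(1\wedge\de)^2\le\de$, gives $|A_2|\le(1+\tfrac{C_3}{2})\eps\rho^2\de$. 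Finally $A_1=-\tfrac{\eps}{2}(h''(y_1)-h''(y_1^0))(y_2-y_1)$ is bounded by $\tfrac{C_3}{2}C_0\eps\rho^2(1\wedge\de)$ via the mean value theorem, \eqref{cubtype}, and \eqref{yseparation}. A short case analysis on $\de\lessgtr 1$ then shows that the total error, divided by the lower bound $\tfrac{C_0^2}{4}\eps\rho^2\de$ from \eqref{admissible1}, is $O(1/C_0)$, yielding the factor-$8$ comparison claimed once $C_0$ is fixed large enough relative to $C_3$.

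For the second estimate I would combine the first with the identity \eqref{TV2+TV3}: the correction $\tau_{z_2}(z_1,z_2)-\tau_{z_1}(z_1,z_2)=-\tfrac{\eps}{2}h'''(\eta')(y_2-y_1)^2$ has size $\sim C_0^2\eps\rho^2$ by \eqref{cubtype} and \eqref{yseparation}. In the straight-box regime $\de\ge 1$ this correction is dominated by $|\tau_{z_1}|\sim C_0^2\eps\rho^2\de$, so $|\tau_{z_2}|\sim C_0^2\eps\rho^2\de=C_0^2\eps\rho^2(1\vee\de)$. In the curved-box regime $\de<1$ the correction is itself the main term, pinned by \eqref{admissible2} to size $\sim C_0^2\eps\rho^2=C_0^2\eps\rho^2(1\vee\de)$; the looser constant $1000$ in the conclusion absorbs the weaker cushion available here, where \eqref{admissible2} is imposed as an independent hypothesis rather than being automatic from \eqref{admissible1} and \eqref{TV2+TV3}. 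The main technical point, and essentially the only substantive check not already performed in \cite{bmv17}, is to verify that every Taylor remainder encountered above is of order two in $y$ and hence involves only $h'''$, whose uniform bound is guaranteed by \eqref{cubtype}.
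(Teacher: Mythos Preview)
Your proposal is correct and takes essentially the same approach as the paper, which simply refers the reader to the proof of Lemma~2.1 in \cite{bmv17} and notes that only \eqref{cubtype} is needed. Your explicit telescoping $A_1+A_2+A_3$ and your handling of the second estimate via \eqref{TV2+TV3} together with the admissibility hypothesis \eqref{admissible2} (needed precisely when $\de\sim 1$) faithfully reproduce that argument, and your observation that every Taylor remainder involves only $h'''$ is exactly the point that justifies carrying the proof over to the coarse cubic setting.
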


\medskip
Up to now we focused on the case $|\tau_{z_1}(z_1,z_2)|\lesssim|\tau_{z_2}(z_1,z_2)|.$
For the symmetric case, corresponding to the situation where $|\tau_{z_1}(z_1,z_2)|\gtrsim|\tau_{z_2}(z_1,z_2)|$, by
interchanging the roles of $z_1$ and $z_2$  we define
accordingly  for any $t^0_1, x^0_2\in\J,$  $y^0_1$ the left endpoint of $I_1$ and $y^0_2\in\I_2$  the sets $\widetilde{U}_1^{t_1^0,y_1^0,y_2^0,\delta}$ and $\widetilde{U}_2^{x_2^0,y_2^0,\delta}$
in analogy with our discussion in \cite{bmv17}, and denote the  corresponding admissible pairs  $(\widetilde{U}_1^{t_1^0,y_1^0,y_2^0,\delta}, \widetilde{U}_2^{x_2^0,y_2^0,\delta})$ as {\it admissible pairs of type 2}. We shall skip  the details.\\
By $\tilde \cP^{\de},$ we shall denote the set of all admissible pairs of
 type 2  at scale $\de$  (and $\rho$, contained in $V_1\times V_2,$), and by $\tilde \cP$
the  corresponding unions over all dyadic
scales $\de.$

\smallskip
In analogy with Lemma \ref{sizeofdeltas}, we have
\begin{lemma}\label{sizeofdeltas2}
If $(\tilde U_1,\tilde U_2)=({\tilde U}_1^{t^0_1,y_1^0,y_2^0,\delta},{\tilde U_2}^{x^0_2,y_2^0,\delta})\in \tilde \cP^\de$  is an admissible pair of type 2, then for all $(z_1,z_2)\in(\tilde U_1,\tilde U_2)$ we have
$$
|\tau_{z_1}(z_1,z_2)|\sim_{1000} C_0^2 \eps\rho^2(1\vee\de)\mbox{   and   }
|\tau_{z_2}(z_1,z_2)|\sim_8 C_0^2 \eps\rho^2\de.
$$
\end{lemma}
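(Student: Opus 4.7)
The claim is the exact mirror image of Lemma \ref{sizeofdeltas} under the involution that swaps the roles of $z_1$ and $z_2$. By the construction of admissible pairs of type 2 (obtained from the definition of type 1 by interchanging indices $1$ and $2$, and hence also interchanging $\tau_{z_1}$ and $\tau_{z_2}$), there are reference points $z_1^0=(x_1^0,y_1^0) \in \tilde U_1$ and $z_2^0=(x_2^0,y_2^0) \in \tilde U_2$ satisfying the type~2 analogs of \eqref{admissible1}--\eqref{admissible2}, namely
\begin{align*}
\tfrac{C_0^2}{4}\eps\rho^2\delta \le |\tau_{z_2^0}(z_1^0,z_2^0)|=|t_1^0-x_2^0| &< 4\,C_0^2\eps\rho^2\delta,\\
\tfrac{C_0^2}{512}\eps\rho^2(1\vee\delta) \le |\tau_{z_1^0}(z_1^0,z_2^0)| &< 5\,C_0^2\eps\rho^2(1\vee\delta).
\end{align*}
The plan is therefore to propagate these two reference estimates to all $(z_1,z_2)\in \tilde U_1\times \tilde U_2$, exactly as is done in the proof of Lemma~2.1 of \cite{bmv17}, relying only on the coarse cubic type hypothesis \eqref{cubtype}.

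First I would control $|\tau_{z_2}(z_1,z_2)|$. From the formula \eqref{TV1},
\[
\tau_{z_2}(z_1,z_2) = x_2-x_1+\eps\bigl[h'(y_2)-h'(y_1)-\tfrac12 h''(y_2)(y_2-y_1)\bigr],
\]
so $\tau_{z_2}(z_1,z_2)$ is an affine-plus-smooth function of its arguments. The set $\tilde U_2^{x_2^0,y_2^0,\delta}$ is, by construction (mirroring \eqref{whitneybox1}), an essentially parallelepipedal neighborhood of $z_2^0$ of side lengths $\sim \eps\rho^2\delta\times\rho(1\wedge\delta)$, while $\tilde U_1^{t_1^0,y_1^0,y_2^0,\delta}$ is, mirroring \eqref{U2alt}, defined precisely as the set of $z_1$ for which $\tau_{z_2^0}(z_1,z_2^0) - (\text{constant})$ lies in an interval of length $\eps\rho^2\delta$, within a horizontal slab of height $\rho$. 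Using the bounds $\|h\|_{C^2}\lesssim C_3$ implied by \eqref{cubtype} and the mean value theorem, the variation of $\tau_{z_2}(z_1,z_2)$ across $\tilde U_1\times \tilde U_2$ differs from $\tau_{z_2^0}(z_1^0,z_2^0)$ by a quantity that can be made smaller than $\tfrac18 C_0^2\eps\rho^2\delta$, provided $C_0$ is chosen large enough. This yields $|\tau_{z_2}(z_1,z_2)|\sim_8 C_0^2\eps\rho^2\delta$.

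For the estimate on $|\tau_{z_1}(z_1,z_2)|$, I would combine the above with the basic identity \eqref{TV2+TV3}:
\[
|\tau_{z_1}(z_1,z_2)-\tau_{z_2}(z_1,z_2)|\sim \eps(y_2-y_1)^2 \sim \eps C_0^2\rho^2,
\]
where the last $\sim$ uses the strip separation \eqref{yseparation}. In the straight box case $\delta \ge 1$ the first term $|\tau_{z_2}(z_1,z_2)|\sim C_0^2\eps\rho^2\delta$ dominates and we read off $|\tau_{z_1}(z_1,z_2)|\sim C_0^2\eps\rho^2\delta = C_0^2\eps\rho^2(1\vee\delta)$. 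In the curved box case $\delta<1$ the second term $\sim \eps C_0^2\rho^2$ dominates, giving $|\tau_{z_1}(z_1,z_2)|\sim C_0^2\eps\rho^2 = C_0^2\eps\rho^2(1\vee\delta)$, where the slightly larger constant $1000$ absorbs the implicit constants from \eqref{TV2+TV3} and from the admissibility range in the reference estimate for $\tau_{z_1^0}$.

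The main obstacle, as in the proof of Lemma \ref{sizeofdeltas}, is bookkeeping the constants tightly enough that both the upper and lower bounds survive with the prescribed $\sim_8$ and $\sim_{1000}$ tolerances; this is exactly where one uses that $C_0$ is a large dyadic constant so that the smooth perturbations introduced by the Taylor corrections are dominated by the leading Whitney-scale terms. Since no new analytic input beyond \eqref{cubtype} is required -- in particular no control on $h^{(4)}$ or higher -- and since the whole setup is symmetric under swapping $z_1 \leftrightarrow z_2$, the detailed calculation is formally identical to the one for type~1 pairs and we may refer to the argument in \cite{bmv17}.
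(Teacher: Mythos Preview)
Your proposal is correct and follows the same approach as the paper: the paper gives no separate proof for Lemma~\ref{sizeofdeltas2} at all, merely introducing it with ``In analogy with Lemma~\ref{sizeofdeltas}, we have,'' where Lemma~\ref{sizeofdeltas} itself is proved by referring to Lemma~2.1 in \cite{bmv17} together with \eqref{cubtype}. Your sketch---reducing to the type~1 argument via the $z_1\leftrightarrow z_2$ symmetry, using the defining conditions of the sets $\tilde U_i$ to control $\tau_{z_2}$, and then invoking \eqref{TV2+TV3} and \eqref{yseparation} for $\tau_{z_1}$---is exactly the intended argument, just spelled out in more detail than the paper does.
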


The crucial bilinear estimates that we shall establish for admissible pairs are given in the following theorem, which extends  Theorem 4.3 in  \cite{bmv19} to the case of coarse cubic perturbations $h(y):$ 

\begin{thm}\label{bilinear2}
Let $p>5/3,$ $q\ge2.$ Then, for every admissible pair  $(U_1,U_2)\in \cP^\de $
at scale $\de,$  the following bilinear estimates hold true:
If $\de>1$ and $\epsilon \de\rho^2\le 1$, then
  \begin{align*}
	 \|\ext_{U_1}(f)\ext_{U_2}(g)\|_p
	\leq C_{p,q}  (\epsilon\delta\rho^3)^{2(1-\frac 1p-\frac 1q)}\|f\|_q\|g\|_q.
 \end{align*}
If  $\de\le 1,$ then
 \begin{align*}
	 \|\ext_{U_1}(f)\ext_{U_2}(g)\|_p
	\leq C_{p,q}\, \,(\epsilon\rho^3)^{2(1-\frac 1p-\frac 1q)} \,  \delta^{5-\frac 3q-\frac 6p}\|f\|_q\|g\|_q.
 \end{align*}
The  constants in these estimates are independent of the given admissible pair, of $\eps, \rho$ and of $\de.$ The same
 estimates are valid for admissible pairs
 $(\tilde U_1,\tilde U_2)\in \tilde\cP^\de $ of type 2.
\end{thm}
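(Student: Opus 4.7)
By symmetry it suffices to treat admissible pairs of type 1 in $\cP^\de$; the type 2 case follows from Lemma \ref{sizeofdeltas2} by interchanging the roles of $z_1$ and $z_2$. The strategy, closely following Theorem 4.3 of \cite{bmv19}, is to normalize the admissible pair $(U_1,U_2)$ to a pair of unit-size patches with unit transversality via an affine change of coordinates in $\R^3,$ to apply a bilinear restriction estimate on the normalized configuration, and to then undo the rescaling to collect the powers of $\eps,\rho,\de.$

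The affine rescaling is dictated by the geometry of $U_1$ and $U_2$ described after \eqref{U2alt}. In the straight box case $\de>1,$ both $U_1$ and $U_2$ are essentially parallelograms of dimensions $\eps\rho^2\de\times\rho$; we change coordinates $(x,y,z)=(\eps\rho^2\de\,x',\rho\,y',\eps\rho^3\de\,z')$ together with a shear that absorbs the slope $h''(y_1^0)/2$ appearing in the definition of $U_1^{x_1^0,y_1^0,\de}.$ A short computation shows that the new phase takes the form $\tilde\phi(x',y')=x'y'+\tilde\eps\,\tilde h(y')$ on a unit domain, where $\tilde\eps=\de^{-1}$ and $\tilde h$ inherits a coarse cubic condition in the sense of \eqref{cubtype} (with constants bounded in terms of $C_3$). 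Lemma \ref{sizeofdeltas}, together with the identities \eqref{TV2+TV3}, then guarantees that the normalized transversalities $\tau_{z_1'}$ and $\tau_{z_2'}$ are of unit size. In the curved box case $\de\le 1,$ $U_1$ is a parallelogram of sides $\eps\rho^2\de\times\rho\de$ while $U_2$ is a $\eps\rho^2\de$-thick neighborhood of a curved arc of curvature $\sim\eps$ lying in a $\rho^2\times\rho$ rectangle; the rescaling normalizes $U_1$ to a unit square and $U_2$ to a $\de$-thickened arc of unit curvature inside a unit-size rectangle, again yielding a new phase of coarse cubic type.

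The heart of the argument is the following bilinear estimate on the normalized configuration: for unit-size transverse patches $U_1',U_2'$ on a coarse cubic perturbation of the saddle $z=xy,$ and for $p>5/3,\,q\ge 2,$
\begin{align*}
\|\ext_{U_1'}f\,\ext_{U_2'}g\|_{L^p(\R^3)}\le C\,\|f\|_{L^q}\|g\|_{L^q},
\end{align*}
with $C$ depending only on $C_3.$ Here lies the principal new difficulty compared to \cite{bmv19}: because no control is available on derivatives of $h$ of order $\ge 4,$ the wave packets obtained from a standard frequency-and-space decomposition of $\ext$ are only polynomially (not rapidly) decaying away from their central tubes. One must therefore rework the bilinear method of \cite{lee05,v05,bmv17,bmv19} by pairing each classical tube with a hierarchy of ``hollow tubes'' that quantitatively account for the slowly decaying far-field contributions, as outlined in the Introduction; the standard orthogonality and transversality arguments then have to be re-run in this enlarged tube family. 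I expect this reworking of the wave-packet stage to be the main technical obstacle.

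Once the normalized estimate is established, one reverses the affine rescaling. Tracking the Jacobians acting on $f$ and $g$ (producing factors depending on $q$) and on the output $L^p(\R^3)$ norm (producing factors depending on $p$) yields the stated bounds $(\eps\de\rho^3)^{2(1-1/p-1/q)}$ in the straight box case and $(\eps\rho^3)^{2(1-1/p-1/q)}\de^{5-3/q-6/p}$ in the curved box case, where the extra $\de$-power in the latter reflects the anisotropy $\rho\de$ versus $\rho$ of $U_1$ relative to $U_2.$ The uniformity of the constants in $\eps,\rho,\de$ is automatic from the uniformity of the normalized bilinear estimate.
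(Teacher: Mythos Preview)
Your overall strategy --- reduce by affine rescaling to a normalized configuration, prove the bilinear estimate there via the slowly-decaying wave packet machinery with hollow tubes, then undo the scaling --- is precisely the paper's approach, and your treatment of the straight box case $\de>1$ is correct.

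However, your account of the curved box case $\de\le1$ contains a real imprecision that matters. You write that the rescaling sends $U_1$ to a unit square and $U_2$ to a ``$\de$-thickened arc \dots\ inside a unit-size rectangle,'' but then state the normalized bilinear estimate for ``unit-size transverse patches'' with a constant depending only on $C_3$, and attribute the full power $\de^{5-3/q-6/p}$ to rescaling Jacobians. These two descriptions are inconsistent, and the second is not what happens. No single affine map sends both $U_1$ (of dimensions $\eps\rho^2\de\times\rho\de$) and $U_2$ (sitting in a $\rho^2\times\rho$ box) to unit-size patches while keeping the phase of coarse cubic type on the whole domain: the $y$-extents differ by a factor $\de$, and if you force $U_1$ to be unit then the perturbation term blows up like $\de^{-2}$ on $U_2'$. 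What the paper does is normalize only $\eps$ and $\rho$ to $\sim1$, arriving at the prototypical pair \eqref{U1}--\eqref{U2} where $U_1$ is still of size $\de\times\de$ and $U_2$ is a $\de$-thin curved box of length $\sim1$; the bilinear estimate on \emph{this} configuration (Theorem~\ref{bilinear}) carries the factor $\de^{7/2-6/p}$, which for $q=2$ is exactly $\de^{5-3/q-6/p}$. A further scaling $x=\de\bar x$ balances the two transversalities and produces the setup of Theorem~\ref{bilinestn}, whose conclusion $\de^{5/2-4/p}$ is the genuine output of the slowly-decaying wave packet / hollow tube argument. The disparity $\|H\phi\|\sim1$ on $U_1^s$ versus $\|H\phi\|\sim1/\de$ on $U_2^s$ (see \eqref{cur}) forces tubes of different lengths ($R^2$ versus $\de R^2$) and is what makes this case delicate. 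In short: the $\de$-power in the curved box case does \emph{not} come from Jacobian bookkeeping; it is the content of the hard bilinear theorem, and your normalized estimate must already carry it.
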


The case $\de\le 1$ of this theorem can easily be reduced by means of suitable coordinate transformations to the  main result in the next subsection on  ``prototypical admissible  pairs'' (compare \cite{bmv19}.)  Its proof will require new techniques. 

The case where $\de>1,$ which is in some sense more classical and easier to deal with, can be handled by combining the  new techniques that we shall explain for the case $\de\le1$ with more classical methods as described in Subsection 4.2 of \cite{bmv19}. We shall leave the details of the  treatment of this case to the interested reader. 

\medskip
Given the bilinear estimates of Theorem \ref{bilinear2}, we can then exactly follow or arguments from Section 5 in \cite{bmv19} in order to pass from these bilinear estimates to the linear Fourier extension estimate in Theorem \ref{cubrest}  and in this way complete the proof of Theorem \ref{cubrest}. Indeed, one easily checks that all  arguments in 
Section 5 of  \cite{bmv19} effectively only require a control on the first three derivatives of $F,$ but not on higher order derivatives, as had already been observed in  Section 5 of \cite{bmv19}, so that these arguments work as well for  perturbations  of coarse cubic type $h$ in place of cubic type perturbations.

 \subsection{A prototypical admissible pair in the curved box case and the crucial scaling
transformation}\label{proto}

In this section we shall present a  \lq\lq prototypical"
case where  $U_1$ and $U_2$ will form an admissible pair of type 1 centered at  $z_1^0=0\in U_1$ and $z_2^0\in
U_2,$  with  $\eps\sim1,\rho\sim1$ and $\delta\ll1$, i.e., $|y_1^0-y_2^0|\sim 1,$ and $|\tau_{z_2^0}(z_1^0,z_2^0)|\sim 1$  but
$|\tau_{z_1^0}(z_1^0,z_2^0)|\sim \delta\ll 1.$  This means that we shall be in the curved box case.

Fix  a small number $0<c_0\ll1$ ($c_0=10^{-10}$ will, for instance, work). Assume that
$0<\delta\le1/10,$ and put
\begin{align}
	U_1:=&[0,c_0^2\delta)\times [0,c_0\delta) \label{U1}\\
	U_2:=&\{(x_2,y_2):0\le y_2-b< c_0,0\le x_2+F'(y_2)-a<c_0^2\delta\},\label{U2}
\end{align}
where $|b|\sim_21$, $|a|\sim_4\delta$ and $F$ is a function of {\it coarse cubic type} in the sense of \eqref{cubtype}, i.e.,
\begin{eqnarray}\label{cuberrorF}
F(0)=F'(0)=F''(0)=0 \quad \text{and}\quad  {C_3}/4\le F'''(y)\le C_3 \quad \text{for all}\  y.
\end{eqnarray}

\noindent\bf Remark. \rm Note that in the case $\eps=1$, if we set $C_0=1/{c_0},$  $\rho=c_0,$ then any admissible pair
$(U_1,U_2)=(U_1^{0,0,\delta},U_2^{a,0,b,\delta})$, as in \eqref{whitneybox1}, would satisfy \eqref{U1} and \eqref{U2} with the above conditions on $a$ and $b$ and suitable $F$.
\medskip

Our bilinear result in this prototypical case  is as follows:
\begin{thm}[prototypical case]\label{bilinear}
Let $p>5/3,$ and let  $U_1,U_2$ be as in \eqref{U1}, \eqref{U2}. Assume further that  $\phi(x,y)= xy+F(y),$   where $F$ is a real-valued  smooth perturbation function of cubic type, i.e., satisfying estimates \eqref{cuberrorF}, and denote by
$$
\ext_{U_i} f(\xi)=\int_{U_i} f(x,y) e^{-i(\xi_1 x+\xi_2 y+\xi_3\phi(x,y))} \eta(x,y) \, dx dy, \qquad i=1,2,
$$
the  corresponding Fourier extension operators.  Then, if the constants $c_0$ and $\de\ll1$ in \eqref{U1}, \eqref{U2}  are  sufficiently small,
\begin{align}\label{bilinest}
	 \|\ext_{U_1}(f_1),\ext_{U_2}(f_2)\|_p \leq C_p \, \delta^{\frac{7}{2}-\frac{6}{p}}
\|f_1\|_2\|f_2\|_2
 \end{align}
 for every $f_1\in L^2(U_1)$ and every  $f_2\in L^2(U_2),$
where the constant $C_p$  will  only depend on $p$ and the constants $C_l$ in \eqref{cuberrorF}.
\end{thm}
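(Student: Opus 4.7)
The plan is to follow the bilinear wave-packet method of Tao--Vargas--Vega, adapted to the present coarse-cubic setting. The principal novelty compared with the finite-type analysis in \cite{bmv19} is that the hypothesis \eqref{cuberrorF} provides only $C^3$ control on $F$, so the wave packets in the decomposition of $\ext_{U_i}f_i$ are no longer Schwartz but only polynomially decaying with a fixed, finite exponent. My proof would combine a normalizing rescaling that isolates the factor $\delta^{7/2-6/p}$, a wave packet decomposition supplemented by \emph{hollow tubes}, and a bootstrap / induction-on-scales argument to close the estimate.

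As a first step, I would apply a suitable anisotropic affine change of variables tailored to the admissible pair $(U_1,U_2)$, combined with translations absorbing the lower-order terms of $F$ at the centers $z_1^0,z_2^0$. By Lemma \ref{sizeofdeltas}, on the original prototype $|\tau_{z_1}|$ is of order $\delta$ while $|\tau_{z_2}|$ is of order $1$; the rescaling should normalize both to size one. Tracking the Jacobians in physical and frequency space together with the rescaling of $L^2$-norms produces exactly the desired prefactor $\delta^{7/2-6/p}$, reducing the problem to a uniform unit-scale bilinear estimate for a phase $\tilde x\tilde y+\tilde F(\tilde y)$, with $\tilde F$ again of coarse cubic type in the sense of \eqref{cuberrorF}. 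I would next decompose each $\ext_{\tilde U_i}\tilde f_i$ into wave packets at a large scale $R$, associated with tubes of dimensions $R^{1/2}\times R^{1/2}\times R$. Because only $\tilde F'''$ is controlled, the integration-by-parts argument that localizes each packet produces only a fixed polynomial decay $\langle \mathrm{dist}/R^{1/2}\rangle^{-N_0}$ outside the central tube, with $N_0$ of moderate size.

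To cope with this slow decay I would introduce dyadic hollow tubes: for each central tube $T$ let $T^{(k)}$ denote the $2^k$-dilation of $T$ and write $\R^3=T^{(0)}\cup\bigcup_{k\ge 0}(T^{(k+1)}\setminus T^{(k)})$; on each hollow shell the $L^2$-mass of the packet is bounded by $2^{-kN_0}$ times its mass on $T$, and the shell contributions are then summed geometrically. With these pieces in place I would run the standard bilinear orthogonality and Tao--Vargas--Vega geometric argument cube-by-cube at scale $R^{1/2}$, use the now order-one transversalities to obtain the bilinear $L^2$ bound, and interpolate with the trivial bilinear $L^\infty$ bound to reach $L^p$ for $p>5/3$. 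The hollow-tube contributions are absorbed by a bootstrap argument in $R$: at each step one loses a small factor $R^{\eta'}$ from the slow decay and gains a strictly larger factor $R^{-\eta}$ from transversality, so the iteration closes. The main obstacle is precisely this last balance — ensuring that the hollow-tube loss remains strictly dominated by the transversality gain, and that the bootstrap converges uniformly in the unknown higher derivatives of $\tilde F$. This is the genuinely new analytic ingredient relative to \cite{bmv19}, where Schwartz wave packets rendered the issue invisible.
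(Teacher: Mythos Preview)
Your overall architecture (rescale, slow-decay wave packets, hollow tubes, induction on scales, geometric argument, interpolation) matches the paper's, but the rescaling step contains a genuine gap. You claim that an anisotropic affine change of variables ``produces exactly the desired prefactor $\delta^{7/2-6/p}$, reducing the problem to a uniform unit-scale bilinear estimate for a phase $\tilde x\tilde y+\tilde F(\tilde y)$, with $\tilde F$ again of coarse cubic type.'' This is not achievable. The two patches have incompatible geometries: $U_1$ is a $c_0^2\delta\times c_0\delta$ box while $U_2$ is a genuinely curved box of width $\sim\delta$ and length $\sim 1$, so no affine map can simultaneously send both to unit-scale regions. The scaling the paper actually uses is $x=\delta\bar x,\ y=\bar y$, which balances the two transversalities $TV_i^s$ to order one (Lemma~\ref{transvscaled}) but leaves the phase as $\bar x\bar y+F(\bar y)/\delta$; the Hessian on the rescaled $S_2$ is then of size $1/\delta$, not $1$, and the target estimate after scaling is $\delta^{5/2-4/p}$ (Theorem~\ref{bilinestn}), not a $\delta$-free constant. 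If you were to call $\tilde F(\bar y)=F(\bar y)/\delta$, then $\tilde F'''\sim 1/\delta$, so it is \emph{not} uniformly of coarse cubic type.

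This residual $\delta$-asymmetry drives a structural feature you omit: the wave packets for the two patches must have \emph{different} tube lengths. In the paper's normalization, tubes for $S_1$ have length $R^2$ ($\kappa=1$ in Lemma~\ref{wave packets}) while tubes for $S_2$ have length $\delta R^2$ ($\kappa=1/\delta$), and to make the latter fit correctly into the ambient cuboid $Q(R')$ one first reparametrizes $S_2$ as a graph over $(y,u)$ with $u=\phi(x,y)$. The induction parameter is $R'=\delta R$, and the $\delta$-power in the final estimate comes from a careful $L^1$--$L^2$ interpolation (not $L^\infty$ as you suggest; the range $p>5/3$ lies between $1$ and $2$): the $L^1$ bound contributes $\delta^{-3/2}R'^2$ and the $L^2$ bound contributes $\delta^{1/2}R'^{-1/2}$, which interpolate to $\delta^{5/2-4/p}$. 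Without tracking this asymmetry you cannot recover the correct exponent.
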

Since according to Lemma \ref{sizeofdeltas}  the transversalities  $\tau_{z_1}(z_1,z_2)$ and $\tau_{z_2}(z_1,z_2)$ are of quite different sizes when $\de\ll 1,$ as in \cite{bmv19}
we first apply  a scaling transformation in $x$ by  a factor $\de$ in order to adjust these transversalities to the same level:

We introduce new coordinates $(\bar x,\bar y)$ be writing $x=\de \bar x, y=\bar y,$ and then re-scale the phase function $\phi$ by putting
$$\phi^s(\bar x,\bar y):=\frac{1}{\de}\phi(\de \bar x, \bar y)=\bar x \bar y+\frac {F(\bar y)}{\de}.$$
 Denote by $U_i^s$ the corresponding re-scaled domains, i.e.,
 \begin{eqnarray*}
U^s_1&=&\{(\bar x_1,\bar y_1): 0\le \bar x_1< c_0^2 ,\ 0\le  \bar y_1< c_0\delta   \},\\
U^s_2&=&\{(\bar x_2,\bar y_2): 0\le  \bar x_2+\frac {F'(\bar y_2)}{\de}-\bar a< c_0^2, \ 0\le  \bar y_2-\bar b< c_0\},
\end{eqnarray*}
where $c_0$ is small and $|\bar a|=|a/\de|\sim 1$ and $\bar b=b\sim 1.$
By $S_i^s, i=1,2,$ we denote the corresponding   scaled surface patches
$$
S_i^s:=\{(\bar x,\bar y,\phi^s(\bar x,\bar y)): (\bar x,\bar y)\in U_i^s\}.
$$

%\begin{proof}
Observe that
\begin{align*}%label{nablaphis}
	\nabla\phi^s(\bar x,\bar y)=(\bar y,\bar x+F'(\bar y)/\de),
\end{align*}
and
\begin{equation}\label{Hphi}
H\phi^s(\bar x,\bar y)=
	\left(\begin{array}{cc}
    0  & 1  \\
    1 & F''(\bar y)/\de
\end{array}\right),
\end{equation}
so that in particular
\begin{align}\label{bddgrad}
	|\nabla\phi^s(\bar z)|\lesssim 1
\end{align}
for all $\bar  z\in U^s_1\cup U^s_2$.

Assume next that  $\bar  z_1\in U^s_1$ and $\bar  z_2\in U^s_2.$  Since $|\bar y_1|\le c_0\de,|\bar y_2|\sim 1,$  we see that
 \begin{equation}\label{Fprimes}
 \begin{cases}  |\frac{F'(\bar y_1)}\de |\sim\frac{|F'''(\eta_1)\bar y_1^2|}\de \lesssim C_3c_0^2\frac{\de^2}\de=c_0^2 C_3\de, \quad
\frac{|F''(\bar y_1)|}\de \sim \frac{|F'''(\tilde\eta_1)\bar y_1|}\de \lesssim c_0C_3,\\
 |\frac{F'(\bar y_2)}\de |\sim\frac{|F'''(\eta_2)\bar y_2^2|}\de \sim\frac{ C_3}\de, \quad
\frac{|F''(\bar y_2)|}\de \sim \frac{|F'''(\tilde\eta_2)\bar y_2|}\de \sim \frac{C_3}\de
\end{cases}
\end{equation}
(for suitable choices of intermediate points $\eta_i,\tilde \eta_i$).  Moreover, we then also see that
\begin{equation}\label{graddiffs}
\nabla\phi^s(\bar z_2)-\nabla\phi^s(\bar z_1) =\big(\bar y_2-\bar y_1,\bar x_2+\tfrac {F'(\bar y_2)}{\de}-(\bar x_1+\tfrac {F'(\bar y_1)}{\de})\big)
	=(\bar b, \bar a)+\Landau(c_0).
\end{equation}
	
Following further on the proof of  Lemma 2.3 in \cite{bmv17}, assume that we translate the two patches of surface  $S_1^s$ and $S_2^s$ in such a way  that the two points $\bar z_1$ and $\bar z_2$  coincide after translation, and  assume that the vector
 $\omega=(\omega_1,\omega_2)$ is tangent to the corresponding   intersection curve $\gamma(t)$  at this point. Then \eqref{graddiffs} shows that we may assume without loss of generality that
\begin{align}\label{tangent}
	\omega=(-\bar a,\bar b)+\Landau(c_0).
\end{align}
In combination with \eqref{Fprimes} this implies that
\begin{align*}
	H\phi^s(\bar z_i)\cdot\trans\omega = \left(\begin{array}{cc}
    0  & 1  \\
    1 & F''(\bar y_i)/\de
\end{array}\right)
\left(
\begin{array}{cc}
  -\bar a+ \Landau(c_0)  \\
\bar b+ \Landau(c_0)  \\
\end{array}
\right).
\end{align*}
Thus, if $i=1,$ then by \eqref{Fprimes},
\begin{equation}\label{H1}
H\phi^s(\bar z_1)\cdot\trans\omega =
\left(
\begin{array}{cc}
  \bar b+ \Landau(c_0) \\
-\bar a  + \Landau(c_0) \\
\end{array}
\right)
\quad \text{ and   }\quad  |H\phi^s(\bar z_1)\cdot\trans\omega| \sim 1,
\end{equation}
and if $i=2,$ then
\begin{equation}\label{H2}
H\phi^s(\bar z_2)\cdot\trans\omega =
\left(
\begin{array}{cc}
  \bar b+ \Landau(c_0) \\
-\bar a  + \bar b F''(\bar y_2)/\de+\Landau(c_0)/\de \\
\end{array}
\right)
\quad \text{ and   }\quad  |H\phi^s(\bar z_1)\cdot\trans\omega| \sim 1/\de,
\end{equation}
if $\de\ll1$ is sufficiently small.
\smallskip

Following \cite{bmv17}, the  refined transversalities that we need to control in order to apply the bilinear method in the next section are given by
\begin{equation}\label{transnew}
\Big|TV^s_i(\bar z_1,\bar z_2)\Big|:=\Big|
\frac{\det(\trans (\nabla\phi^s(\bar
z_1)-\nabla\phi^s( \bar z_2)),  H\phi^s( \bar z_i)\cdot\trans\omega)}
{\sqrt{1+|\nabla\phi^s( \bar z_1)|^2}\sqrt{1+|\nabla\phi^s(\bar z_2)|^2}\, |H\phi^s(\bar
z_i)\cdot\trans\omega|}\Big|, \qquad i=1,2.
\end{equation}
	
	But, if $i=1,$ then by \eqref{graddiffs}, \eqref{H2}, \eqref{bddgrad}  and \eqref{Fprimes} we see that
$$
	|\det(\trans (\nabla\phi^s(\bar
z_1)-\nabla\phi^s( \bar z_2)),  H\phi^s( \bar z_1)\cdot\trans\omega)|=\Big|\det\left(
\begin{array}{ccc}
    \bar b+\Landau(c_0)  &  \bar b+\Landau(c_0)  \\
  \bar a+\Landau(c_0)    &  -\bar a+\Landau(c_0) \\
\end{array}
\right)
\Big|\sim 1,
$$
hence $\Big|TV^s_1(\bar z_1,\bar z_2)\Big|\sim1$.

 And, if $i=2,$ then by \eqref{graddiffs}, \eqref{H1}, \eqref{bddgrad}  and \eqref{Fprimes} we have
$$
	|\det(\trans (\nabla\phi^s(\bar
z_1)-\nabla\phi^s( \bar z_2)),  H\phi^s( \bar z_2)\cdot\trans\omega)|=\Big|\det\left(
\begin{array}{ccc}
    \bar b+\Landau(c_0)  &  \bar b+\Landau(c_0)  \\
  \bar a+\Landau(c_0)    &  -\bar a+\bar b F''(\bar y_2)/\de +\Landau(c_0)/\de \\
\end{array}
\right)
\Big|,
$$
hence also $\Big|TV^s_2(\bar z_1,\bar z_2)\Big|\sim (1/\de)/(1/\de)\sim 1,$  provided $\de$ and $c_0$ are
sufficiently small.
\medskip

We have thus proved the following lemma:

\begin{lemma}\label{transvscaled}
The transversalities for the scaled patches of surface  $S_i^s, \,i=1,2,$ satisfy
$$
\Big|TV^s_i(\bar z_1,\bar z_2)\Big|\sim 1, \quad i=1,2.
$$
\end{lemma}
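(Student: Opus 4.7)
The plan is to compute the ratio in the definition \eqref{transnew} of $TV^s_i(\bar z_1,\bar z_2)$ directly, using the three preparatory identities \eqref{graddiffs}, \eqref{H1} and \eqref{H2} that have just been established, together with the gradient bound \eqref{bddgrad}. The denominator factors $\sqrt{1+|\nabla\phi^s(\bar z_1)|^2}\,\sqrt{1+|\nabla\phi^s(\bar z_2)|^2}$ are comparable to $1$ by \eqref{bddgrad}, so they contribute only a harmless constant to the final size estimate, and it suffices to compare the $2\times 2$ determinant in the numerator with the length $|H\phi^s(\bar z_i)\cdot {}^t\omega|$ appearing in the denominator.

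For $i=1$, by \eqref{graddiffs} and \eqref{H1} the relevant $2\times 2$ matrix has columns
$$(\bar b,\bar a)+\Landau(c_0)\quad\text{and}\quad (\bar b,-\bar a)+\Landau(c_0),$$
so its determinant equals $-2\bar a\bar b+\Landau(c_0)$. Since $|\bar a|\sim 1$ and $|\bar b|\sim 1$ and $c_0$ is small, this determinant is of size $\sim 1$; combined with $|H\phi^s(\bar z_1)\cdot{}^t\omega|\sim 1$ from \eqref{H1}, this yields $|TV^s_1(\bar z_1,\bar z_2)|\sim 1$.

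For $i=2$, by \eqref{graddiffs} and \eqref{H2} the matrix in question has columns $(\bar b,\bar a)+\Landau(c_0)$ and $(\bar b+\Landau(c_0),\,-\bar a+\bar b\,F''(\bar y_2)/\de+\Landau(c_0)/\de)$. The dominant term in the second column's second entry is $\bar b\,F''(\bar y_2)/\de$, which is of size $\sim 1/\de$ by \eqref{Fprimes}. Expanding the determinant along the first row, the leading contribution therefore comes from $\bar b\cdot(-\bar a+\bar b F''(\bar y_2)/\de)-\bar b\cdot \bar a= \bar b^2 F''(\bar y_2)/\de-2\bar a\bar b$, which is of order $1/\de$; the $\Landau(c_0)/\de$ and $\Landau(c_0)$ errors remain strictly smaller provided $c_0$ is chosen small enough. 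Dividing by $|H\phi^s(\bar z_2)\cdot{}^t\omega|\sim 1/\de$ from \eqref{H2}, this again gives $|TV^s_2(\bar z_1,\bar z_2)|\sim 1$.

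The only subtle point — and the step one has to verify carefully — is that the various $\Landau(c_0)$ and $\Landau(c_0)/\de$ error terms cannot conspire to cancel the leading contributions in either the numerator or the denominator. This is exactly why the conclusion is stated under the assumption that $c_0$ and $\de$ are sufficiently small: once $c_0\ll 1$, every error is a fixed small fraction of the main term, and the comparison $\sim 1$ is stable. No further structure of $F$ beyond the coarse cubic bounds \eqref{cuberrorF} is needed, which is important for the later application to perturbations with little control on higher derivatives.
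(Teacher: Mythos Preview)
Your proposal is correct and follows exactly the same approach as the paper: the lemma is stated immediately after the very computation you reproduce, and the paper concludes with ``We have thus proved the following lemma.'' You have simply written out the determinant expansion and the error-term bookkeeping a bit more explicitly than the paper does (in particular for the $i=2$ case, where the paper just displays the matrix and asserts the ratio $(1/\de)/(1/\de)\sim 1$), but the argument is identical.
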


In the next section, we shall establish the crucial bilinear Fourier extension estimates for the pair of scaled patches of surface $S^s_1$ and $S^s_2.$

%%%%%%%%%%%%%%%%%%%%%%%%%%%%%%

\setcounter{equation}{0}
\section{The bilinear method with slowly decaying wave packets for the re-scaled prototypical case }\label{bilinprot}

First, recall  from the previous Subsection \ref{proto} that we had passed from our original coordinates $(x,y)$ to the coordinates $(\bar x, \bar y)$ by means of the scaling transformation $(x,y)=A(\bar x, \bar y)=(\delta\bar x,\bar y),$ and had put 
$\phi^s(\bar z):=\phi(A\bar z)/{\mathfrak a},$ with  $\mathfrak a:=\det A=\delta$.

 By means of simple  scaling argument (compare Subsection 3.1 of  \cite{bmv17}), estimate \eqref{bilinest}  is equivalent the following  bilinear Fourier extension estimate for the scaled patches of surface $S_i^s$ which where defined as the  graphs of $\phi^s$ over the sets $U^s_i, i=1,2:$ 
\begin{align}\label{bilinestscaled}
	 \|\ext_{U^s_1}(f_1)\,\ext_{U^s_2}(f_2)\|_p \leq C_p \,
\delta^{\frac{5}{2}-\frac{4}{p}} \|f_1\|_2\|f_2\|_2,
 \end{align}
 for every $f_1\in L^2(U^s_1)$ and every  $f_2\in L^2(U^s_2).$ 
  \medskip
  
  In order to defray our notation, let us in the sequel drop the superscripts $s,$  and write simply $U_i$ and $S_i$ in place of $U^s_i$ and $S^s_i,$ $\phi$ in place of $\phi^s,$  and  denote the coordinates $(\bar x, \bar y)$  again by $(x,y),$ etc.. I.e., we assume that 
 $$\phi( x,y)=xy+\frac {F(y)}{\de},$$
 where $F$ is a perturbation function of coarse cubic type in the sense of \eqref{cuberrorF}, i.e., 
 \begin{eqnarray*}%\label{cuberrorF}
F(0)=F'(0)=F''(0)=0 \quad \text{and}\quad  {C_3}/4\le F'''(y)\le C_3 \quad \text{for all}\  y,
\end{eqnarray*}
and that 
 \begin{eqnarray}\label{Us}
 \begin{split}
U_1&=\{( x_1, y_1): 0\le  x_1< c_0^2 ,\ 0\le  y_1< c_0\delta   \},\\
U_2&=\{( x_2,y_2): 0\le   x_2+\frac {F'(y_2)}{\de}- a< c_0^2, \ 0\le y_2- b< c_0\},
\end{split}
\end{eqnarray}
 where  $c_0$ is assumed to be sufficiently  small and $| a|\sim 1$ and $b\sim 1.$  The corresponding transversalities will then be denoted by $TV_i( z_1, z_2),$ where $z_i=(x_i,y_i),\ i=1,2.$ 
 \smallskip

 Note that for $z_1 \in U_1$ and  $z_2\in U_2$, and $k=0,1,2,3$, we have $|F^{(k)}(y_1)|\sim|y_1|^{3-k}\lesssim \delta^{3-k}$, whereas $|F''(y_2)|\sim 1.$   
 \smallskip
 
 In view of \eqref{Hphi}, \eqref{bddgrad} and Lemma \ref{transvscaled}, the following properties of $\nabla \phi,$  the   operator norm of the Hessian matrix $H\phi,$  and of the transversalities  of $\phi$ are immediate or easy to verify:  
 \smallskip
 
For all $z_1\in U_1$, $z_2\in U_2$,  we have 
\begin{eqnarray}\label{grad1}
|\nabla\phi(z_i)|&\lesssim& 1\qquad i=1,2;\\
\Big|TV_i(z_1,z_2)\Big|&\sim& 1, \qquad i=1,2; \label{tv3012}	\\
\|H(\phi)(z_1)\| \sim  1, && \ \|H(\phi)(z_2)\|\sim 1/\delta; \label{cur}\\
|\partial_1^j\partial_2^k\phi(z_1)|\lesssim\delta^{2-j-k}, && \ |\partial_1^j\partial_2^k\phi(z_2)|\lesssim\delta^{-1}, \qquad \text{ if }  k\leq3, j+k\geq2.\label{deriv} 
\end{eqnarray}
Note that the last estimates can in some cases be improved, but they will be sufficient  for our purposes. With this modified notation,  our major goal  in this  section will  be to prove the following

\begin{thm}\label{bilinestn}
Under the preceding assumptions on $\phi$ and $U_1,U_2,$ the following bilinear estimate holds true  uniformly for every $f_1\in L^2(U_1)$ and every  $f_2\in L^2(U_2):$ 
$$
 \|\ext_{U_1}(f_1)\,\ext_{U_2}(f_2)\|_p \leq C_p \,
\delta^{\frac{5}{2}-\frac{4}{p}} \|f_1\|_2\|f_2\|_2.
$$
\end{thm}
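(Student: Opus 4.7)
The plan is to follow the bilinear method of Tao--Vargas--Vega in the form adapted to hyperbolic surfaces in \cite{bmv17, bmv19}, combined with the modifications required by the fact that we only have the derivative estimate \eqref{deriv} up to order three. At a spatial scale $R \gg 1$, I would perform a wave packet decomposition $\ext_{U_i}f_i = \sum_{T_i} (\ext_{U_i}f_i)_{T_i}$, where the tubes $T_i$ reflect the anisotropic geometry of $S_i$: for the flat patch $S_1$, where $\|H\phi\|\sim 1$, the tubes $T_1$ have dimensions comparable to $R^{1/2}\times R^{1/2}\times R$, whereas for the curved patch $S_2$, where $\|H\phi\|\sim 1/\delta$ by \eqref{cur}, the natural tubes $T_2$ are elongated in the direction of small curvature and of dimensions roughly $(R\delta)^{1/2}\times R^{1/2}\times R$. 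The new feature in our setting is that, since $F^{(k)}$ for $k\geq 4$ is not controlled, each wave packet $(\ext_{U_i}f_i)_{T_i}$ will fail to be Schwartz and may only decay polynomially, of some fixed order $M$, on the concentric dilates $jT_i$, $j=1,2,\dots$. These dilates $jT_i\setminus(j-1)T_i$ play the role of the \emph{hollow tubes} mentioned in the introduction.

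The second step is the core bilinear $L^2$ estimate for pairs of tubes. The uniform transversality $|TV_i(z_1,z_2)|\sim 1$ from Lemma \ref{transvscaled} ensures that, for any pair $(T_1,T_2)$ whose dilates meet, the parallelepiped $T_1\cap T_2$ has volume comparable to a fixed multiple of $R^{3/2}\delta^{1/2}$, and Plancherel on the intersection yields a bilinear $L^2\to L^2$ estimate per pair, weighted by the polynomial decay factor $(1+j+k)^{-M}$ for each pair of hollow annuli $jT_1\cap kT_2$. Summing this polynomially weighted sum over $(j,k)$ (choosing $M$ large enough depending only on the cutoff constants in \eqref{deriv}) and over the collection of wave packets, followed by orthogonality of the wave packets in $L^2$, produces a bilinear estimate at the target $L^2$ level with no loss in $R$.

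The third step is interpolation and induction on scales. The bilinear $L^2\times L^2\to L^2$ estimate from step two is interpolated against the trivial $L^2\times L^2\to L^\infty$ estimate to obtain, for any $p>5/3$, an $L^p$ bound at scale $R$ with at most an $R^\varepsilon$ loss; an $\varepsilon$-removal/induction-on-scales argument in the style of Tao then removes this loss. The exponent $\delta^{5/2-4/p}$ in the final inequality arises naturally from tracking the $\delta$-dependent volume of $T_1\cap T_2$, the anisotropy of the tubes $T_2$, and the number of relevant wave packets on $U_2$, exactly as in the corresponding bookkeeping in \cite{bmv19}.

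The principal obstacle, and the place where the present argument truly departs from \cite{bmv19}, is step two: in the standard setting one takes advantage of the Schwartz-type decay of wave packets so that only a bounded number of pairs $(T_1,T_2)$ interact effectively, while here we must carry through the transversality and orthogonality arguments simultaneously for all annular hollow-tube pairs $(jT_1,kT_2)$, and control the resulting doubly-indexed weighted sum without destroying the $\delta$-power. Ensuring that the polynomial decay order $M$, which is dictated by the finite regularity assumption \eqref{cuberrorF}, suffices for this double summation is the technical heart of the proof.
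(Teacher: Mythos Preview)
Your outline has the right general shape (wave packets, hollow tubes, induction on scales), but there are two genuine gaps that would prevent the argument from closing.

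First, the claim that one may ``choose $M$ large enough depending only on the cutoff constants in \eqref{deriv}'' is precisely what fails here and is the whole point of the paper. With only three derivatives of $\phi$ under control, integration by parts in the kernel of the wave packet can be iterated at most a bounded number of times; in the paper this yields decay of order exactly $-2$ in each coordinate (property (P3) of Lemma~\ref{wave packets}), and no better. You cannot trade regularity for a larger $M$, because there is no higher regularity to trade. Consequently the doubly-indexed sum over hollow annuli $(jT_1,kT_2)$ is \emph{not} absolutely summable with a harmless weight $(1+j+k)^{-M}$; instead one must organise the hollow tubes into $O(\log R)$ dyadic shells $T^{\nu}_{w_j}$ and carry the parameter $\nu$ through the entire combinatorial argument, gaining only a factor $\nu^{-1}$ from Lemma~\ref{franz} and matching it against a loss of $\nu^2$ in the geometric lemma.

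Second, your ``Plancherel on the intersection yields a bilinear $L^2\to L^2$ estimate per pair'' step skips the actual heart of the bilinear method: the geometric counting argument. After Plancherel and Fourier support localisation one is led to bound, for each cube $q$, the number of $v_1'$ with $(v_1',\phi(v_1'))$ lying near a translated intersection curve $\Pi_{v_1,v_2}$ of $S_1$ and $S_2$. This is handled by Lemma~\ref{geom}, a cone-type argument that converts the count into $|W_2|$ at the cost of the factors $\nu_1^2\nu_2^2$, and it is here that the transversality $|TV_i|\sim 1$ is really used. Without this lemma there is no way to pass from per-pair control to the global $L^2$ bound with the correct power of $R$. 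Also note that the final interpolation in the paper is between $L^1$ and $L^2$ (not $L^2$ and $L^\infty$): the $L^1$ estimate is a straightforward Cauchy--Schwarz giving $\delta^{-3/2}R'^{2}$, and interpolation with the $L^2$ bound $\delta^{1/2}R'^{-1/2}$ produces the exponent $\delta^{5/2-4/p}$ for $p>5/3$.
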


Our  basic approach to this theorem  will follow  the lines in our preceding article \cite{bmv16}, which in turn is following the structure in Lee's article \cite{lee05}. We shall explain in more detail those parts where new ideas  are  needed,  and shall be brief about those arguments  which are essentially the same  as  in the classical bilinear method. For further details on the latter parts, we refer the reader to \cite{lee05}, \cite{v05}, and also \cite{T2}.

\subsection{Slowly decaying wave packets}\label{slowpack}
As a first step, we need to introduce a modified version of the so-called wave packet decomposition. Wave packet decompositions have become  by now a standard tool  for the study of problems in harmonic analysis related to geometric properties of submanifolds.  In those applications, hitherto one always had a  good control over all derivatives (up to a fixed, but arbitrary order) of the phase functions  arising in the given contexts.

In our present setting, however, we have to deal with phase functions $\phi(x,y)=xy+F(y)/\delta$, where $F$ only  satisfies the conditions in \eqref{cuberrorF}. In particular, we cannot assume any control over derivatives of $F$ of order 4 or higher! Consequently,  our ``wave packets" will lack a rapid decay away from the axes of  the tubes arising in the wave packet decomposition. Let us  briefly discuss how much we can salvage from the usual wave packet decomposition, and where we need modifications.

\smallskip

For a point $(\xi,\tau)\in\R^2\times\R$, we will write $\xi=(\xi^1,\xi^2)$, to distinguish from subscripts referring to $S_1$ and $S_2$.
\medskip 

In the next lemma, we  shall derive a slowly decaying wave packet decomposition for more general  classes of phase functions $\phi$  and corresponding surfaces given as the graph of $\phi$ over an open and bounded subset $U\subset\R^2,$  under suitable conditions on derivatives  of $\phi.$  By
$$
\ext_{U} f(\xi,\tau)=\int_{U} f(x,y) e^{-i(\xi^1 x+\xi^2 y+\tau\phi(x,y))} \alpha(x,y) \, dx dy,$$
 we denote again the corresponding Fourier extension operator, where $\alpha \in C_0^\infty$ is a suitable amplitude.
 \smallskip
 
In Subsection \ref{mainproof}, we shall later apply this lemma directly to $S_1,$ with $\kappa:=1,$ which is possible in view of \eqref{grad1} -- \eqref{deriv}.   Note here that for $R>1/\delta$ the set $ U'_1$  has essentially the same  dimension and shape as $U_1,$ so that  \eqref{grad1}--\eqref{deriv} can be assumed to hold true even on the larger  set $U'_1.$ The lemma could as well be applied to $S_2$, with $\kappa:= 1/\delta,$  since again  $ U'_2$  has essentially the same  dimension and shape as $U_2.$ However,   as for $S_2, $  we shall in Subsection \ref{mainproof} first re-parametrize $S_2$  in order to match our wave packet decomposition with  some problems arising in the induction on scales argument,  and then apply the lemma in the new coordinates.

\begin{lemma}\label{wave packets} 
Let $\delta>0$ and  $\kappa\geq1.$  Let $U\subset\R^2$ be an open and bounded subset and  let $ U':=U+\Landau(\delta)$ be  suitable thickening of the set $U$ of order $\delta. $ Assume further that $\phi$ a  smooth phase function  defined on $U'$ such that 
\begin{equation}\label{wavecond}
\|H\phi(z)\|\sim\kappa\ge 1\text {  and  } |\partial_1^j\partial_2^k\phi(z)|\lesssim(\delta\kappa)^{3-k-j}\delta^{-1}
\end{equation}
for every  $z\in U'$ and $k\leq3$, $j+k\geq2$. Let us also assume that $\partial_1^2\phi\equiv0.$ 

Then for every $R\ge 1/\delta$ the function $\ext_{U} f$ can be decomposed into slowly decaying  wave packets adapted to $\phi$ with  central tubes of radius
$R$ and length $R^2/\kappa$.
More precisely,  consider the   index sets  $\Y:=R\Z^2$ and  $\V:=R^{-1}\Z^2 \cap  U'$,  and  define for $w=(\eta,v)\in \Y\times \V=:\W$ the tube
\begin{align}\label{wavepack}
	T_w= \{(\xi,\tau)\in\R^2\times\R: |\xi-\eta+\tau\nabla\phi(v)|\leq R,\ |\tau|\leq R^2/\kappa \}.
\end{align}
Then there exist functions (``slowly decaying wave packets'') $p_w$ and coefficients $c_w\in\C$, $w\in \W$, such that  $\ext_{U}f$ can be decomposed into

$$
\ext_{U}f(\xi,\tau)=\sum\limits_{w\in\W} c_w p_w(\xi,\tau)
$$
for every $\tau\in\R$ with  $|\tau|\leq{R^2}/{\kappa},$ $\xi=(\xi^1,\xi^2)\in\R^2$, in such a way that  the following hold true:
\begin{enumerate}
\renewcommand{\labelenumi}{(P\arabic{enumi})} %Constructs P1, P2, ..
	\item $p_w=\ext_{U}(\F^{-1} (p_w(\cdot,0))).$
	\item $\supp \F p_w \subset B((v,\phi(v)),2/R).$
	\item $p_w$ decays away from $T_w$  as follows:
					$$|p_w(\xi,\tau)|\leq C R^{-1} \left(1+\frac{|\xi^1-\eta^1+\tau\partial_1\phi(v)|}{R}\right)^{-2}
					\left(1+\frac{|\xi^2-\eta^2+\tau\partial_2\phi(v)|}{R}\right)^{-2}.$$
			In particular,  $\|p_w(\cdot,\tau)\|_2 \lesssim 1$.
	\item For all subsets $W\subset \W$, we have
					$ \|\sum\limits_{w\in W} p_w(\cdot,\tau)\|_2\lesssim |W|^\frac{1}{2}$.
	\item $\|(c_w)_w\|_{\ell^2} \lesssim \|f\|_{L^2}$.					
\end{enumerate}
\renewcommand{\labelenumi}{(\roman{enumi})} 
\end{lemma}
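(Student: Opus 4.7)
The plan is to adapt the standard space--frequency wave-packet construction familiar from the bilinear restriction literature (cf.\ \cite{lee05, bmv16, bmv19}), and to track carefully the point at which the usual Schwartz decay of the wave packets must be weakened to polynomial decay, since no bound on $\partial^k\phi$ for $k\ge 4$ is available. Concretely, I would fix suitable smooth bumps furnishing a physical partition of unity $\{\psi_v\}_{v\in R^{-1}\Z^2}$ at scale $1/R$ and a frequency partition of unity $\{\chi_\eta\}_{\eta\in R\Z^2}$ at scale $R$, and decompose
\begin{equation*}
f\alpha=\sum_{(\eta,v)\in\W} f_{v,\eta},\qquad f_{v,\eta}:=\F^{-1}\bigl(\chi_\eta\cdot\F(\psi_v\,f\alpha)\bigr),
\end{equation*}
so that each $f_{v,\eta}$ is effectively concentrated in $B(v,C/R)$ in physical space and in $B(\eta,CR)$ in frequency. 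Writing $f_{v,\eta}=c_w F_w$ with $L^2$-normalised wave packets $F_w$ and $c_w\in\C$, and setting $p_w(\xi,\tau):=\ext_U(F_w)(\xi,\tau)$, I obtain the expansion $\ext_U f=\sum_w c_w p_w$, property (P1) by construction, (P5) via the near-orthogonality of the $F_w$ in $L^2$ together with Plancherel, and (P2) from the spatial localisation of $F_w$ combined with the observation that $\F p_w$ is supported on $\graph\phi$ within distance $\lesssim 1/R$ of $\bigl(v,\phi(v)\bigr)$.

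The heart of the argument is (P3). Substituting $z=v+u$, Taylor expanding $\phi(v+u)=\phi(v)+\nabla\phi(v)\cdot u+E(u)$ and absorbing the modulation $e^{i\eta\cdot z}$ built into $F_w$ into the linear phase, one rewrites
\begin{equation*}
p_w(\xi,\tau)=e^{-i(\xi\cdot v+\tau\phi(v))}\int\tilde F_w(u)\,e^{-i(\xi-\eta+\tau\nabla\phi(v))\cdot u}\,e^{-i\tau E(u)}\,du,
\end{equation*}
with $\tilde F_w$ a bump at scale $1/R$ and $\|\tilde F_w\|_{L^1}\lesssim R^{-1}$. To extract the factor $(1+|\xi-\eta+\tau\nabla\phi(v)|_j/R)^{-2}$ for $j=1,2$, I would integrate by parts exactly twice in $u_j$ against the linear phase; each derivative falls either on $\tilde F_w$ (costing $R$, since $\tilde F_w$ lives on scale $1/R$) or on $e^{-i\tau E}$. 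Using $|u|\lesssim 1/R$, $|\tau|\le R^2/\ka$ and $\|H\phi\|\sim\ka$ one checks
\begin{equation*}
|\tau\,\partial_{u_j}E(u)|\lesssim|\tau|\,\ka\,|u|\lesssim R,\qquad |\tau\,\partial_{u_2}^2E(u)|=|\tau\,\partial_2^2\phi(v+u)|\lesssim R^2,
\end{equation*}
while the hypothesis $\partial_1^2\phi\equiv0$ causes $\tau\,\partial_{u_1}^2E(u)$ to vanish identically. Hence every product of derivatives produced by two IBPs in one coordinate contributes at most $R^2$; dividing by $|\xi-\eta+\tau\nabla\phi(v)|_j^2$ delivers the claimed decay, and the prefactor $R^{-1}$ comes from $\|\tilde F_w\|_{L^1}$. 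The assumption $R\ge 1/\de$ enters to guarantee that the cubic remainder $|\tau\,\partial^3\phi||u|^3\lesssim 1/(\ka\de R)$ in the Taylor expansion of $\tau E$ is bounded, so that $e^{-i\tau E(u)}$ behaves as a well-controlled phase factor.

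Finally, (P4) follows from (P3) together with near-orthogonality: at fixed $\tau$ the supports of $p_w(\cdot,\tau)$ concentrate on the translated balls $B(\eta-\tau\nabla\phi(v),R)$, which for fixed $v$ are pairwise disjoint in $\eta\in R\Z^2$; the Lipschitz bound $|\nabla\phi(v)-\nabla\phi(v')|\lesssim\ka|v-v'|$ controls the motion of the centres with $v$, and a Schur test based on the polynomial kernel of (P3) absorbs the cross terms to yield $\|\sum_{w\in W}p_w(\cdot,\tau)\|_2\lesssim|W|^{1/2}$. The essential novelty relative to the classical wave-packet constructions of \cite{lee05} or \cite{bmv16} lies entirely in the weakened decay of (P3): only finitely many IBPs are available because no bound on $\partial^k\phi$ for $k\ge4$ is assumed, and the main thing to verify is that two IBPs in each coordinate still produce sufficient decay to drive the bilinear arguments of the subsequent sections.
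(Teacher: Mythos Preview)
Your approach coincides with the paper's: both use the standard $\psi_v$/$\chi_\eta$ double partition, set $p_w\propto\ext F_w$, and concentrate on (P3) via integration by parts, with the key observation that $\partial_1^2\phi\equiv0$ permits unrestricted IBP in $u_1$ while only a bounded number of IBPs in $u_2$ are available because $\partial_2^4\phi$ is uncontrolled. The paper's packaging differs slightly: it isolates a kernel $K(\xi,\tau)=\int e^{i\Phi}\tilde\psi\,dz$ depending only on the fixed bump $\tilde\psi$, proves the decay \eqref{oszint2} for $K$, and then recovers $q_w$ by convolution with $\widehat{F_w}$, with the maximal-function normalisation $c_w:=R\,M(\widehat{\psi_v f})(\eta)$ of \cite{lee05}. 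This keeps the IBP independent of $f$ and sidesteps having to control derivatives of your $f$-dependent amplitude $\tilde F_w$ (which does work once you use that $\tilde F_w$ has Fourier support in $B(0,CR)$, but you should say so). One point in your sketch of (P4) needs sharpening: the Schur test from (P3) alone does \emph{not} yield orthogonality in $v$, since for neighbouring $v,v'\in\V$ the centres $\eta-\tau\nabla\phi(v)$ differ by at most $|\tau|\kappa|v-v'|\lesssim R$ and therefore overlap; the paper first invokes Plancherel together with (P2) to decouple distinct $v$'s, and only then sums in $\eta$ at fixed $v$ via the explicit product bound in (P3).
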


\begin{remarks} 
(a) Observe that property (P3) implies the weaker estimate
\begin{align}\label{disttw}
	|p_{w}(\xi,\tau)|\leq C R^{-1} \left(1+\frac{\dist((\xi,\tau),T_{w})}{R}\right)^{-2},
\end{align}
which means that the mass of $p_w$ is in a certain weak sense mostly localized to $T_w$ (however, the decay outside $T_w$ is now  much slower than classically).
This estimate will obviously hold true later in Subsection \ref{mainproof} for the wave packets associated to $\ext_{U_1} f_1,$ but as well for those associated to  $\ext_{U_2} f_2,$ and in the last parts of the proof where interactions between wave packets of type 1 with wave packets of type 2 will become relevant we can just rely on these weaker estimates \eqref{disttw} in place of the more refined ones given by $(P3).$ 

(b) The condition $\partial_1^2\phi\equiv0$ will be satisfied in our applications and simplifies the integrations by parts argument in the proof a lot, but is quite  surely not  necessary. We shall, however, not dwell on this here.
\end{remarks}

\begin{proof} We shall closely follow  our preceding paper \cite{bmv16} and  mainly focus on those  parts of the proof which will require new  arguments. 
\smallskip

The initial construction remains the usual one, as in (for instance) \cite{lee05}:
 Let $\psi,\hat\eta\in C_0^\infty(B(0,1))$ be chosen in a  such a way that for
$\psi_v(z):=\psi(R(z-v))$ and  $\chi_\eta(\xi):=\chi(\frac{\xi-\eta}{R}),$
we have $\sum\limits_{v\in\V}\psi_v=1$ on $U$ and $\sum\limits_{\eta\in\Y} \chi_\eta=1$.
We also chose a slightly bigger function  $\tilde\psi\in C_0^\infty(B(0,3))$ such that $\tilde\psi=1$ on $B(0,2)\supset\supp\psi+\supp\hat\eta,$ and put 
$\tilde\psi_v(z):=\tilde\psi(R(z-v)),$ where we again assume that $z=(x,y).$ 
Then the functions
$$
F_{(\eta,v)}:=\F^{-1}(\widehat{\psi_vf}\chi_\eta)=(\psi_v f)\ast \check\chi_\eta, \qquad \eta\in\Y, v\in\V,
$$
 are  essentially well-localized in both position and momentum/frequency  space and supported in balls of radius $\sim 1/R.$  Note also that $f$ supported in $U$  we have $\ext_{U} f=\ext f,$  Define then 
$$
q_w:=\ext(F_w), \qquad w=(\eta,v)\in\W;
$$
up to a certain factor $c_w,$ which will be determined later, these are already the announced wave packets, i.e., $q_w=c_w p_w$.
\smallskip

We may also assume for simplicity that the amplitude in the definition of the extension operator $\ext$ is identically 1 on the sets $U_i.$ Since $f=\sum\limits_{w\in W} F_w $, we then have the decomposition  $\ext_{U}f=\sum\limits_{w\in W} q_w.$ 
Let us concentrate on property (P3) - the other properties are then rather easy to establish. It is easy to see that we have for every $w\in\W$
\begin{align*}
	q_w(\xi,\tau) 
	=& (2\pi)^{-2} R^{-2}\int K(\xi-\zeta,\tau) \widehat {F_w}(\zeta)\, d\zeta,					
\end{align*}
with the kernel
\begin{align} \nonumber
	K(\xi,\tau):=\int e^{i\big(\xi(\frac{z}{R}+v)+\tau\phi(\frac{z}{R}+v)\big)}\tilde\psi(z) \alpha(\frac{z}{R}+v) \, dz.
\end{align}

To simplify the proof a bit, let us pretend in the sequel that $\alpha(\frac{z}{R}+v)\equiv 1$  on the support of $\tilde \psi;$  since all derivatives of $\alpha(\frac{z}{R}+v)$ are uniformly bounded on the support of $\tilde \psi,$  we shall see that the presence of the factor $\alpha(\frac{z}{R}+v)$ has indeed no effect on the subsequent integration by parts arguments.

\smallskip
We shall show  that
\begin{align}\label{oszint2}
	|K(\xi,\tau)| \lesssim \left(1+\frac{|\xi^1+\tau\partial_1\phi(v)|}{R}\right)^{-2}
					\left(1+\frac{|\xi^2+\tau\partial_2\phi(v)|}{R}\right)^{-2}.
\end{align}
 To this end, we may  clearly assume that
\begin{align}\label{eins}
	|\xi^1+\tau\partial_1\phi(v)|\gg R,
\end{align}
or
\begin{align}\label{zwei}
	|\xi^2+\tau\partial_2\phi(v)|\gg R.
\end{align}
 Let us start with the case where both conditions hold true.  We first perform integrations  by parts with respect to the first component $x$ of  $z=(x,y) $ in the oscillatory integral
\begin{align}\nonumber
	K := \int e^{i\Phi(z)}\tilde\psi(z)\,dz,
\end{align}
with phase
\begin{align}\nonumber
	\Phi(z) := \xi(\frac{z}{R}+v)+\tau\phi(\frac{z}{R}+v).
\end{align}
Recalling that  $\partial_1^2\phi\equiv0,$ hence
$\partial_1^2\Phi(z)\equiv0,$ we see that  
\begin{align*}
	K=\int ie^{i\Phi(z)} 
	\partial_1\frac{\tilde\psi(z)}{\partial_1\Phi(z)} \,dz	
	= i\int e^{i\Phi(z)} 
	\frac{\partial_1\tilde\psi(z)}{\partial_1\Phi(z)} \,dz.
\end{align*}
Repeating this argument $N$ times, we obtain
\begin{align}\label{reyes}
	K= i^N\int e^{i\Phi(z)} 
	\frac{\partial_1^N\tilde\psi(z)}{[\partial_1\Phi(z)]^N} \,dz.
\end{align}
To shorten the notation in the subsequent  computation, let  us put $A(z):=\dfrac{i^N\partial_1^N\tilde\psi(z)}{[\partial_1\Phi(z)]^N}$.
Integrating by parts twice now with respect to the second component $y$ of  $z=(x,y),$ we obtain
\begin{align*}
	K =& \int ie^{i\Phi(z)} 
	\partial_2\frac{A(z)}{\partial_2\Phi(z)} \,dz	\\
	=& -\int ie^{i\Phi(z)} 
	\left(\frac{\partial_2^2\Phi(z)}{(\partial_2\Phi(z))^2} A(z)- \frac{\partial_2 A(z)}{\partial_2\Phi(z)}\right) \,dz	\\
	=& \int e^{i\Phi(z)} \partial_2
	\left(\frac{\partial_2^2\Phi(z)}{(\partial_2\Phi(z))^3} A(z)- \frac{\partial_2A(z)}{(\partial_2\Phi(z))^2}\right) \,dz	\\
	=& \int e^{i\Phi(z)} 
	\left(\frac{\partial_2^3\Phi(z)}{(\partial_2\Phi(z))^3} A(z)-3\frac{(\partial_2^2\Phi(z))^2}{(\partial_2\Phi(z))^4}A(z)+3
	\frac{\partial_2^2\Phi(z)}{(\partial_2\Phi(z))^3}\partial_2A(z)- \frac{\partial_2^2A(z)}{(\partial_2\Phi(z))^2}\right) \,dz	\\
	=& \int e^{i\Phi(z)} 
	\left(\frac{\partial_2^3\Phi(z)}{(\partial_2\Phi(z))^3} A(z)-3\frac{(\partial_2^2\Phi(z))^2}{(\partial_2\Phi(z))^4}A(z)+3
	\frac{\partial_2^2\Phi(z)}{(\partial_2\Phi(z))^3}\partial_2A(z)\right) \,dz\\
	&\hskip2cm - \int e^{i\Phi(z)}\frac{\partial_2^2A(z)}{(\partial_2\Phi(z))^2}\,dz.
\end{align*}
For the first term, we stop integrating by parts in $y$, since we have no control of derivatives of $\Phi$ of order higher than 3. However, the second term can be integrated by parts once more.

Notice also that for all $j,k\in \N$ such that $ k\le 3$  and $j+k\ge 2$ we have 
$$
|\partial_1^j\partial_2^k \Phi(z)|\lesssim \frac{|\tau|(\delta\kappa)^{3-j-k}}{\de R^{j+k}}
\leq (\delta\kappa R)^{2-j-k} \leq 1, 
$$
since we assume that $|\tau|\kappa\leq R^2$, $\de\kappa R\geq\de R\ge 1.$ \color{black} Therefore, we see that for every $N\in \N$
\begin{align}
	|K| \lesssim C_N \int_{\supp\tilde\psi} |\partial_1\Phi(z)|^{-N}|\, \partial_2\Phi(z)|^{-3}dz. 
\end{align}

Observe next that for  $|\tau|\leq{R^2}/{\kappa}$ and $z\in\supp\tilde\psi,$ in view of \eqref{wavecond} we have for $j=1,2$ 
$$
|\tau\partial_2\phi(z/R+v)-\tau\partial_j\phi(v)|\lesssim |\tau |(|\pa_1\partial_j \phi|+|\pa_2\pa_j \phi|)\frac {|z|}R\lesssim \frac {R^2}\kappa \cdot \kappa\cdot \frac 1 R \lesssim R.
$$
In view of \eqref{eins},\eqref{zwei}, these estimates imply that for $j=1,2,$ 
$$
|\partial_j\Phi(z)|=|\xi^j/R+\tau\partial_j\phi(z/R+v)/R|\gtrsim |\xi^j+\tau\partial_j\phi(v)|/R,
$$
so we obtain \eqref{oszint2} for this case.

If only one of the conditions \eqref{eins} or \eqref{zwei} holds, we only integrate by parts in one of the variables and arrive again at  \eqref{oszint2}.

Following the proof in \cite{lee05}, we conclude that
\begin{eqnarray*}
	|q_w(\xi,\tau )| &\lesssim&  R^{-2}\int\left| K(\xi-\zeta-\eta,\tau)	\widehat {F_w}(\zeta+\eta) \right|d \zeta		\\
	&=& R^{-2}\int\left|  K(\xi-\zeta-\eta,\tau)\chi\left(\frac{\zeta}{R}\right)\widehat{\psi_v f}(\zeta+\eta) \right|d \zeta	\\
	&\lesssim&  \mathop{M} (\widehat{\psi_v f})(\eta)
	 \left(1+\frac{|\xi^1-\eta^1+\tau\partial_1\phi(v)|}{R}\right)^{-2} 
					\left(1+\frac{|\xi^2-\eta^2+\tau\partial_2\phi(v)|}{R}\right)^{-2},
\end{eqnarray*}
where $M$ denotes the Hardy-Littlewood maximal operator. Thus, by  choosing  $c_w=c_{(\eta,v)}:=R M(\widehat{\psi_vf})(\eta),$ we obtain (P3).
\medskip

Properties (P1) and (P2) follow from the definition of the wave packets as usual.

To prove  (P4), as a first step note that  it is easily seen  by Plancherel's theorem and  (P2) that 
$$ \|\sum\limits_{w\in W} p_w(\cdot,\tau)\|_2\lesssim
 \left(\sum_v\|\sum\limits_{\eta:(\eta,v)\in W} p_{(\eta,v)}(\cdot,\tau)\|_2^2\right)^{1/2} .
 $$
As we lack fast decay in (P3), we need to be a little more careful in establishing the orthogonality in the $y$-parameter in (P4). We have to show that for any $Y\subset\Y$
\begin{align*}
	\|\sum\limits_{\eta\in Y} p_{(\eta,v)}(\cdot,\tau)\|_2 \lesssim |Y|^{1/2}.
\end{align*}
By a linear transformation, we may assume that $v=0$. Then
\begin{eqnarray*}
\|\sum\limits_{\eta\in Y} p_{(\eta,v)}(\cdot,\tau)\|_2^2 
	&=& \sum_{\eta,\eta'\in Y} \int p_{\eta,v}(\xi,\tau)\,\bar p_{\eta',v}(\xi,\tau) d\xi\\
	&\leq& C R^{-2} \sum_{\eta,\eta'\in Y}\prod_{i=1}^2 \int \left(1+\frac{|\xi^i-\eta^i|}{R}\right)^{-2}
					\left(1+\frac{|\xi^i-{\eta'}^i|}{R}\right)^{-2}d\xi^i	\\
& =& C \sum_{\underset{Rk,Rk'\in Y}{k,k'\in\Z^2:}}\prod_{i=1}^2 
 							\int \left(1+|\xi^i-k^i|\right)^{-2}	(1+|\xi^i-{k'}^i|)^{-2} d\xi^i	\\
 &\leq& C \sum_{\underset{Rk\in Y}{k\in\Z^2:}}\prod_{i=1}^2 
 						\int \left(1+|\xi^i-k^i|\right)^{-2}	\sum_{k_i'\in\Z}(1+|\xi^i-{k'}^i|)^{-2} d\xi^i	\\
& \lesssim& |Y|.					
\end{eqnarray*}

For (P5), we refer to \cite{lee05}.
\end{proof}

%%%%%%%%%%%%%%%%%%%%%%%%%%%%%%%%%%%%%
\subsection{Proof of Theorem \ref{bilinestn} - Setup of the argument}\label{mainproof}

Due to a by now standard argument, it is sufficient to prove the following local estimate: For any $p>5/3$ and $\epsilon>0$, there exist a constant $C_{p,\epsilon}$ such that

\begin{align}%\label{locbilinest}
	 \|\ext_{U_1}(f_1),\ext_{ U_2}(f_2)\|_{L^p(Q(R'))} \leq C_{p,\epsilon} R'^\epsilon \, \delta^{\frac{5}{2}-\frac{4}{p}}
\|f_1\|_2\|f_2\|_2
 \end{align}
for all $f_1\in L^2(U_1)$, $f_2\in L^2( U_2)$ and $R'\gg1$, where $Q(R')$ is a cuboid that we  will choose below.

As already mentioned in Subsection \ref{slowpack}, we can apply the wave packet decomposition from Lemma \ref{wave packets}  directly to  $\ext_{U_1}f_1,$ with $\kappa=1.$
The wave packets associated  to $\ext_{U_1}f_1$  and the patch of hypersurface  $S_1$ are mainly  concentrated on tubes $T_{w_1}$,$w_1=(\eta_1,v_1)\in\W_1,$  which are ``horizontal''  translates  of tubes of the form
$$
 T_{(v_1)}:=\{(\xi,\tau):|\xi+\tau\nabla\phi(v_1)|\leq R, |\tau|\leq R^2\},
$$
but, in contrast to the classical situation, they do not decay rapidly away from those tubes. 
 Notice the
standard fact that $T_{(v_1)}$ is a tube of dimension $R\times R\times  R^2$ whose long axis is
pointing in the direction  of the normal vector $N(v_1):=(\nabla\phi(v_1),-1)$ to $S_1$ at
the point $(v_1,\phi(v_1)),$ $v_1\in U_1$.

\smallskip

Note next that  the projections of tubes $T_{w_1}$ to $\xi$-space are contained in a $\delta R^2\times R^2$-rectangle. Therefore we define $Q(R')$ to be the  $\delta R^2\times R^2\times R^2=\frac{R'^2}{\delta}\times\frac{R'^2}{\delta^2}\times\frac{R'^2}{\delta^2}$ cuboid centered at the origin. Then for any $(\xi,\tau)\in Q(R')$, we have $|\tau|\leq R^2$, so we can decompose 
 \begin{equation}\label{wavep1}
\ext_{U_1} f_1(\xi,\tau)=\sum_{w_1\in\W_1} c_{w_1}p_{w_1}(\xi,\tau).
\end{equation}
%\color{magenta}

\medskip
As for the wave packets  associated  to $\ext_{U_2}f_2$  and the patch of hypersurface  $S_2,$  these should then  mainly  be concentrated on shorter tubes $T_{w_2}$,   of length $\de R^2$ in order to fit well into the cuboid $Q(R').$ Moreover, we would like to translate them not horizontally, but in directions of the $\xi^2,\tau$ coordinates so that they can fill up $Q(R').$
\smallskip

For $\ext_{ U_2} f_2$, we therefore need to switch coordinates first: Since  $y_2\sim 1$ on $U_2,$  we may solve the equation $u=\phi(x,y)$ for $x,$ i.e., 
$x=\frac {u-F(y)/\de}y,$ 
and can accordingly  re-parametrize $S_2=\{(x,y,\phi(x,y)):(x,y)\in U_2\}$ in the form\
\begin{eqnarray*}
S_2&=&\{(\tilde\phi(y,u),y,u):(y,u)\in\tilde U_2\}, \text{    with }\\
\tilde\phi(y,u)&:=&\frac {u-F(y)/\de}y,\\ 
\tilde U_2&:=&\{( y,u): 0\le   u+\frac {F'(y)-\frac {F(y)}{y}}{\de}- a< c_0^2, \ 0\le y- b< c_0\}.
\end{eqnarray*}
Since $|y|\sim1$ for $(y,u)\in\tilde U_2$, it is easy to see that (compare \eqref{deriv})
\begin{eqnarray*}
|\partial_1^j\partial_2^k\tilde\phi(y,u)|\lesssim\delta^{-1} \qquad \text{and  } \|H(\tilde \phi)(y,u)\|\sim 1/\delta.
\end{eqnarray*}

This suggests to change variables to $\tilde\xi^1:=\tau$, $\tilde\tau:=\xi^1$ and $\tilde\xi^2:=\xi^2$,
so that the new $\tilde\tau$-coordinate points in the shorter $\xi^1$-direction of $Q(R')$. 
Applying Lemma \ref{wave packets} in the new coordinates with $\kappa:=1/\delta,$ we obtain a slowly decaying wave packet decomposition w.r. to the coordinates $(\tilde\tau,\tilde\xi^2,\tilde\xi^1).$ For  instance, (P3) becomes
\begin{align*}
|p_{w_2}(\xi,\tau)|\leq C R^{-1} \left(1+\frac{|\tilde\xi^1-\eta_2^1+\tilde\tau\partial_1\tilde\phi(\tilde v_2)|}{R}\right)^{-2}
					\left(1+\frac{|\tilde\xi^2-\eta_2^2+\tilde\tau\partial_2\tilde\phi(\tilde v_2)|}{R}\right)^{-2}.	
\end{align*}
The parameters $\tilde v_2$ correspond the parameters $v_2$ in the original coordinates through the relation 
$$
(v_2, \phi(v_2))=(\tilde\phi(\tilde v_2), \tilde v_2).
$$
In particular, the  wave packets  $p_{w_2}$ associated  to $\ext_{U_2}f_2$  and the patch of hypersurface  $S_2$ are mainly  concentrated on tubes $T_{w_2}$  whose axis   point in the direction of the normal to $S_2$ at the point $(v_2, \phi(v_2)) $ and are translates  of tubes of the form
$$
 T_{(v_2)}:=\{(\xi,\tau):|\xi+\tau\nabla\phi(v_2)|\leq R, |\tau|\leq \delta R^2\},
$$
w.r to the coordinates $(\xi^2,\tau).$ We parametrize these tubes again by a pair of parameters  $w_2=(\eta_2,v_2)\in\W_2,$ where $\eta_2$ represents the translation parameter.
 
 \smallskip
Then for any $(\xi,\tau)=(\tilde\tau,\tilde\xi^2,\tilde\xi^1)\in Q(R')$, we have $|\tilde\tau|\leq \delta R^2$, so we can decompose 
 \begin{equation}\label{wavep2}
 \ext_{ U_2} f_2(\xi,\tau)=\sum_{w_2\in\W_2} c_{w_2}p_{w_2}(\xi,\tau).
 \end{equation}

This kind of change of coordinates and the effect on the wave packet decomposition was introduced already in our previous paper \cite{bmv16}, therefore we will omit the details. 
%In particular, we will denote both  linear systems of coordinates by $(\xi,\tau),$ depending on which context we are in.
\smallskip

Observe that the intersection of two transversal tubes $T_{w_1}$ and $T_{w_2}$ is essentially a cube $q$ of side length $R$. 

Another difference to the classical method occurs in the localisation to these cubes $q$. We want to choose functions $\chi_q$ which decay rapidly away from $q$ (by scaling a fixed function $\chi$ in the usual way), but so that  the Fourier transform $\hat\chi_q$ has compact support in a $1/R$ cube centered at the origin. We can do this in a manner so that still $\sum_q\chi_q=1$ on $Q(R')$. The advantage is that then $p_{w_1}\chi_q$ essentially behaves like a wave packet again. In particular, we still have a well localised Fourier support, compare property (P2) from Lemma \ref{wave packets}:
\begin{align}\label{pdos}
	 \supp \F(p_w\chi_q) \subset B((v,\phi(v)),3R^{-1}).
\end{align}
The same idea had already been applied in \cite{bmv16}.

\medskip

The above wave packet decompositions based on Lemma \ref{wave packets}  applies  for any  $R\ge 1/\delta$. 
Since we want to induct on the scale, we introduce $R':=\delta R$, so that we can induct over $R'\geq 1$ (compare again with \cite{bmv16}). More precisely:
 
\begin{defn} For any $\alpha>0$, we say that {E($\mathbf{\alpha}$)} holds true if
\begin{align}\label{locbilinest}
	 \|\ext_{U_1}(f_1),\ext_{ U_2}(f_2)\|_{L^p(Q(R'))} \leq C_{p,\alpha} R'^\alpha \, \delta^{\frac{5}{2}-\frac{4}{p}}
\|f_1\|_2\|f_2\|_2
 \end{align}
for all $R'\geq 1$ and $f_1\in L^2(U_1)$, $f_2\in L^2( U_2)$. 
\end{defn}
Let us fix a sufficiently  small  constant $\gamma>0$ throughout this section. In our induction on scales argument, at various places  harmless powers of the form $R'^{c\gamma}$ will arise, with possibly different values of the constants $c>0.$  With a slight abuse of notation, we shall nevertheless denote all of them by the same letter $c.$ 

  We  will then  show that  hypothesis $E(\alpha)$ essentially implies $E(\alpha(1-\gamma))$. In this argument, we will apply $E(\alpha)$ to $\tilde R':={R'}^{1-\gamma}\geq 1$ in place of $R'.$ Observe here  the inequality $R\geq1/\delta$ is not stable under taking powers of $R;$ this is why  we cannot induct on the parameter $R,$  but have to pass to the parameter $R'.$

For classical wave packet decompositions, the fast decay of the wave packets away from their ``central'' tubes   allows, for instance,  to treat the $L^\infty$-norm of a sum of wave packets all pointing in the same direction as if they had disjoint supports. Even though our rough wave packets are no longer  rapidly  decaying (we only have the weaker decay given by (P3)), the following lemma still holds true. 

\begin{lemma}\label{franz}~
Let $q$ be a cube as before, and let $d_1,d_2\ge 0$. 
\begin{enumerate}
\item Let $v_1\in\V_1$ and $Y_1\subset\Y_1$ such that for any $\eta_1\in Y_1$ we have $\dist(T_{(\eta_1,v_1)},q)\gtrsim d_1  R $. Then
$$\|\sum_{\eta_1\in Y_1} p_{(\eta_1,v_1)}\chi_q\|_{\infty} \lesssim (1+d_1)^{-1}R^{-1}.$$
\item Let $v_2\in\V_2$ and $Y_2\subset\Y_2$ such that for any $\eta_2\in Y_2$ we have $\dist(T_{(\eta_2,v_2)},q)\gtrsim d_2  R.$  Then
$$\|\sum_{\eta_2\in Y_2} p_{(\eta_2,v_2)}\chi_q\|_{\infty} \lesssim (1+d_2)^{-1}R^{-1}.$$

\item	Let $p_{w_1},p_{w_2}$ be transversal wave packets as above. Then
\begin{align}\label{transvinters}
	\int_{\R^3} |p_{w_1}p_{w_2}| \, d(\xi,\tau) \lesssim R.
\end{align}
\end{enumerate}
\end{lemma}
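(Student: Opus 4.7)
My plan is to derive (i), (ii) from the two-factor decay in (P3) by summing over the lattice of translation parameters, and to derive (iii) from (P3) combined with the transversality estimate $|TV^s_i(z_1,z_2)| \sim 1$ from Lemma \ref{transvscaled} via a change of variables.

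For (i), fix a point $(\xi,\tau)$ essentially in $q$ (any rapid-decay tail of $\chi_q$ is absorbed by routine estimates). Property (P3) gives
\[
|p_{(\eta_1,v_1)}(\xi,\tau)| \lesssim R^{-1}\prod_{i=1,2}\Bigl(1+\tfrac{|a_i(\eta_1)|}{R}\Bigr)^{-2},\qquad a_i(\eta_1) := \xi^i - \eta_1^i+\tau\partial_i\phi(v_1).
\]
Since $\diam q \sim R$ and $\dist(T_{(\eta_1,v_1)},q)\gtrsim d_1 R,$ we have $\max_i|a_i(\eta_1)| \gtrsim d_1 R$ (up to a harmless constant). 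Writing $k_i=\eta_1^i/R\in\Z$ and $c_i=(\xi^i+\tau\partial_i\phi(v_1))/R,$ the sum over $\eta_1\in Y_1$ is bounded by
\[
R^{-1}\sum_{\substack{(k_1,k_2)\in\Z^2\\ \max_i|c_i-k_i|\gtrsim d_1}}(1+|c_1-k_1|)^{-2}(1+|c_2-k_2|)^{-2}.
\]
Splitting by which coordinate difference exceeds $d_1,$ in each piece one geometric sum contributes $(1+d_1)^{-1}$ while the other is $O(1),$ so the total is $\lesssim R^{-1}(1+d_1)^{-1},$ which is (i) after multiplying by the uniformly bounded cutoff $\chi_q.$ Part (ii) is the same argument for the family of packets associated with $S_2,$ carried out in the modified coordinates $(\tilde\tau,\tilde\xi^2,\tilde\xi^1).$

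For (iii), I would apply (P3) to both factors to get
\[
|p_{w_1}p_{w_2}|(\xi,\tau) \lesssim R^{-2}\prod_{i=1,2}\prod_{j=1,2}\Bigl(1+\tfrac{|\ell_i^{(j)}(\xi,\tau)|}{R}\Bigr)^{-2},
\]
for four affine-linear forms $\ell_i^{(j)}$ on $\R^3.$ Among the four I would select three whose gradients are linearly independent with Jacobian $\sim 1;$ the relevant $3\times 3$ determinant is controlled from below precisely by the transversality quantities $TV^s_i$ of \eqref{transnew}, so Lemma \ref{transvscaled} guarantees the required non-degeneracy. Changing variables to these three forms and bounding the remaining fourth decay factor by $1,$
\[
\int_{\R^3}|p_{w_1}p_{w_2}|\,d(\xi,\tau) \lesssim R^{-2}\int_{\R^3}\prod_{i=1}^3\Bigl(1+\tfrac{|v_i|}{R}\Bigr)^{-2}dv \lesssim R^{-2}\cdot R^3 = R.
\]

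The delicate step I expect is the identification of three forms whose Jacobian is bounded below by transversality, because the forms associated to $p_{w_2}$ are written in the reparametrized coordinates built from $\tilde\phi$ rather than $\phi$ itself, so one has to carefully match them with the normal/tangent data defining $TV^s_i.$ For (i) and (ii), what substitutes for classical rapid decay is the product structure in (P3): a single radial decay $(1+\dist/R)^{-2}$ would produce a divergent sum over $\Y_j = R\Z^2,$ whereas the two-factor form makes the lattice sum absolutely convergent and simultaneously extracts the desired $(1+d_j)^{-1}$ gain from the distance hypothesis.
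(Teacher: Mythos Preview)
Your treatment of (i) and (ii) matches the paper's: both use the product structure in (P3) to make the lattice sum over $\eta\in R\Z^2$ convergent and to extract the $(1+d_j)^{-1}$ gain. The paper is slightly more explicit about the case-split on $\dist((\xi,\tau),q)$ (when $(\xi,\tau)$ is far from $q$ one loses the distance constraint on $\eta$ but gains from the rapid decay of $\chi_q$), which you fold into ``routine estimates''; that is fine.

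For (iii) your approach is correct but different from the paper's, and your justification misidentifies the relevant transversality. The paper does \emph{not} use the refined (P3) here; it uses only the weaker coordinate-free bound \eqref{disttw}, $|p_{w_j}|\lesssim R^{-1}(1+\dist((\xi,\tau),T_{w_j})/R)^{-2}$. Since the two tube \emph{directions} $(\nabla\phi(v_1),-1)$ and $(\nabla\phi(v_2),-1)$ are non-parallel (simply because $y_1\neq y_2$), an affine change of variables sends $T_{w_1}$ to the $\tau$-axis and $T_{w_2}$ to the $\xi^1$-axis; then a H\"older-type splitting
\[
(1+|(\xi^1,\xi^2)|)^{-2}(1+|(\xi^2,\tau)|)^{-2}\le (1+|\xi^1|)^{-4/3}(1+|\xi^2|)^{-4/3}(1+|\tau|)^{-4/3}
\]
makes the integral separate and gives $R$. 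Your route---selecting three of the four affine forms from (P3) with uniformly nonzero Jacobian---also works (e.g.\ taking both forms for $p_{w_1}$ and the first form for $p_{w_2}$ gives Jacobian $1-y_1\,\partial_1\tilde\phi(\tilde v_2)\sim 1$ since $|y_1|\lesssim\delta$), but the quantity that controls this Jacobian is the elementary non-parallelism of the tube axes, \emph{not} the refined transversalities $TV^s_i$ of \eqref{transnew}, which involve the Hessian and the intersection-curve tangent and encode the deeper fact that a type-$2$ tube crosses a whole \emph{cone} of type-$1$ tubes. Invoking $TV^s_i$ here is harmless but a red herring. The paper's argument has the advantage of sidestepping entirely the coordinate mismatch between the $\phi$- and $\tilde\phi$-parametrizations that you flag as the delicate point.
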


\begin{proof}
We prove (i) - the proof of (ii) is analogous.   Let us also assume for simplicity that $v_1=0;$  the general case can be treated in the same manner. 
\smallskip

First, assume that $ d_1\geq1$. Fix $(\xi,\tau)\in \R^3.$ If $\dist((\xi,\tau),q)\ll  d_1R$, then $\dist(T_{(\eta_1,v_1)},(\xi,\tau))\gtrsim  d_1  R ,$ and thus
\begin{eqnarray*}
	\sum_{\eta_1\in Y_1} |p_{(\eta_1,v_1)}(\xi,\tau)|
	&\lesssim& R^{-1}\sum_{\eta_1\in Y_1} \Big(1+\frac{|\xi^1-\eta^1_1|}{R}\Big)^{-2}\Big(1+\frac{|\xi^2-\eta^2_1|}{R}\Big)^{-2} \\
	&\lesssim& R^{-1}\sum_{\substack{k^1,k^2\in \Z:\\ k^1+k^2\geq d_1}} (1+|k^1|)^{-2}(1+|k^2|)^{-2}	\\
	&\lesssim& (1+ d_1)^{-1}R^{-1}.
\end{eqnarray*}

On the other hand,  if $\dist((\xi,\tau),q)\gtrsim d_1R$, then  we only obtain
$$
\sum_{\eta_1\in Y_1} |p_{(\eta_1,v_1)}(\xi,\tau)|  \lesssim R^{-1}\sum_{k^1,k^2\in \Z} (1+|k^1|)^{-2}(1+|k^2|)^{-2}\leq R^{-1},
$$
but we can use the rapid decay of $\chi_q$ to estimate
\begin{align*}
	 \chi_q(\xi,\tau) \lesssim ( d_1R)^{-N}\ll  d_1^{-N},
\end{align*}
which combined gives an even better estimate than required for (i). 
\smallskip

If $ d_1\leq 1$, the proof is even simpler.
\medskip

For (iii), we only use the coordinate-free decay estimate \eqref{disttw}, which is indeed sufficient for \eqref{transvinters}:
Due to transversality, by an affine linear change of variables, we may assume that $T_{w_1}$ is parallel to, say, the $\tau$-axis and $T_{w_2}$ is parallel to the $\xi^1$-axis, and both tubes go through the origin.
Then
\begin{eqnarray*}
\int_{\R^3} |p_{w_1}p_{w_2}| d(\xi,\tau)
&\lesssim& R^{-2} \int_{\R^3} \left(1+\frac{|(\xi^1,\xi^2)|}{R}\right)^{-2}
										\left(1+\frac{|(\xi^2,\tau)|}{R}\right)^{-2} d(\xi,\tau)	\\
&=& R \int_{\R^3} \left(1+|(\xi^1,\xi^2)|\right)^{-2}
										\left(1+|(\xi^2,\tau)|\right)^{-2} d(\xi,\tau)	\\
&\leq& R \int_{\R^3} \left(1+|\xi^1|\right)^{-4/3}\left(1+|\xi^2|\right)^{-2/3-2/3}
									\left(1+|\tau|\right)^{-4/3} d(\xi,\tau)	\\
	&\lesssim& R.		
\end{eqnarray*}
													
\end{proof}

%%%%%%%%%%%%%%%%%%%%%%%%%%%%%%%%%%%%%%%%%%%%%%%%%%%%%%%%%%%%%%%
\subsection{Dyadic decompositions and reduction}
Similar to the classical bilinear approach, we have to count how many tubes  (and also dilates of them) arising in the wave packet decompositions  \eqref{wavep1}
and   \eqref{wavep2} can   interact at a given  point.

To this end, for any $W_j\subset\W_j, j=1,1,$ and cube  $q\subset Q(R')$, we define  
$$W_j^1(q):=\{w_j\in W_j: R'^\gamma T_{w_j}\cap q\neq\emptyset\}. $$
Since our wave packets have only slow decay away from the tubes, we also want to have control over the tubes which lie at a certain distance to  $q.$  Equivalently, that means that a hollow tube of bigger diameter passes through $q$. For any  dyadic $\nu$  with  $1<\nu<\nu_{\rm max}\sim R'^{1-2\gamma}$, we therefore define the {\it hollow tube} of diameter $\nu  R R'^\gamma$ by 
$$T_{w_j}^\nu:=\big(\nu R'^\gamma T_{w_j}\big)\setminus\big(\frac{\nu}{2} R'^\gamma T_{w_j}\big),$$
where we specify that $rT_{w_j}$ means scaling $T_{w_j}$ around the central axis by the factor $r$. Then let
\begin{align}\label{wjnu}
	W_j^\nu(q):=\{w_j\in W_j: T_{w_j}^\nu\cap q\neq\emptyset\}.
\end{align}
This definition extends to the case $\nu=1$ if we set $T_{w_j}^1:=R'^\gamma T_{w_j}$. 
Finally, we will see that the weak decay of our wave packets does not require a further decomposition beyond $\nu= \nu_{\rm max}$, hence we set $T_{w_j}^{ \nu_{\rm max}}:=\R^3\setminus(\frac{\nu_{\rm max}}{2}R'^\gamma T_{w_j})$, and define again $W_j^{ \nu_{\rm max}}(q)$ by \eqref{wjnu}. In the following, let $\D$ denote the set of all dyadic numbers $\nu$ with $1\leq\nu\leq \nu_{\rm max}$.
\smallskip

Decomposing dyadically as usually in the bilinear argument, we may assume $|W_j^\nu(q)|$ to be essentially constant, i.e., we only need to consider cubes $q$ from the set
\begin{align}
	Q^\mu:=\{q:|W_j^\nu(q)|\sim \mu_{\nu,j}\ \forall j=1,2,\ \nu\in\D\}.
\end{align}
Here, $\mu=(\mu_{\nu,j})_{\nu,j}$, $j=1,2$, $\nu\in\D$, is a collection of dyadic numbers.
Further we define
\begin{align}
	Q^{\mu,\nu}(w_j):=\{q\in Q^\mu:T_{w_j}^\nu\cap q\neq\emptyset\},
\end{align}
and for any collection of dyadic numbers $\lambda_j=(\lambda_{\nu,j})_{\nu},$ $ j=1,2,\nu\in\D,$ 
\begin{align}
	W_j^{\lambda_j,\mu}:=\{w_j\in W_j: |Q^{\mu,\nu}(w_j)|\sim\lambda_{\nu,j}\ \forall\nu\in\D\}, \qquad j=1,2.
\end{align}
Finally let
\begin{align}
	W_j^{\lambda_j,\mu,\nu_j}(q):=W_j^{\lambda_j,\mu}\cap W_j^{\nu_j}(q).
\end{align}
\smallskip

Using these dyadic decompositions, we can make the following reduction:
\begin{lemma}\label{reduct}
Assume that for all $W_1\subset\W_1$, $W_2\subset\W_2,$ all dyadic 
$\lambda_1,\lambda_2,\mu$ and all dyadic $\nu_1,\nu_2=1,\ldots, \nu_{\rm max},$ the following estimate holds true:
\begin{align}\label{waveest}
	\|\sum_{q\in Q^\mu}\sum_{w_1\in W_1^{\lambda_1,\mu,\nu_1}(q)}\sum_{w_2\in W_2^{\lambda_2,\mu,\nu_2}(q)}  p_{w_1}p_{w_2}\chi_q  \|_{L^p(Q(R'))} \leq C_{p,\alpha} R'^\alpha \, \delta^{\frac{5}{2}-\frac{4}{p}}
	|W_1|^{1/2}|W_2|^{1/2}.
\end{align}
Then for any $\gamma>0$, $E(\alpha+\gamma)$ holds true.
\end{lemma}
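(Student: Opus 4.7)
The plan is to start from the wave packet decompositions \eqref{wavep1}, \eqref{wavep2}, localise $\ext_{U_1}f_1\cdot\ext_{U_2}f_2$ to the cubes $q$ using $\sum_q\chi_q\equiv 1$ on $Q(R')$, and then pigeonhole all remaining dyadic parameters (the coefficient sizes $|c_{w_j}|$, the hollow-tube scales $\nu_j$, and the combinatorial counters $\mu,\lambda_j$) down to a single dyadic tuple for which \eqref{waveest} applies. Each pigeonhole produces at most an $O(\log R')$ loss, so altogether only a factor $(\log R')^{O(1)}\lesssim R'^\gamma$ has to be absorbed into the final exponent.

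\textbf{Step 1 (normalising the coefficients).} By (P5), $\|(c_{w_j})\|_{\ell^2}\lesssim\|f_j\|_2$, so the wave packets with $|c_{w_j}|\le R'^{-N}\|f_j\|_2$ (for $N$ large, depending on $p$) contribute a negligible amount once summed pointwise via (P3). For the remainder, the dyadic sizes $|c_{w_j}|$ range over $O(\log R')$ scales, and pigeonholing lets us assume $|c_{w_j}|\sim C_j$ on some subset $W_j\subset\W_j$ with $C_j|W_j|^{1/2}\lesssim\|f_j\|_2$.

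\textbf{Step 2 (pigeonholing the geometric scales and applying the hypothesis).} Insert $\sum_q\chi_q\equiv 1$ on $Q(R')$. For each fixed $q$ the hollow tubes $\{T_{w_j}^\nu\}_{\nu\in\D}$ essentially partition $\R^3$, yielding $W_j=\bigsqcup_{\nu}W_j^\nu(q)$ up to a harmless bounded overlap along boundaries. Since $|\D|\lesssim\log R'$, pigeonholing in $\nu_1,\nu_2\in\D$ costs only $(\log R')^2$. Next, dyadically decompose cubes by $|W_j^\nu(q)|\sim\mu_{\nu,j}$, giving the family $Q^\mu$, and wave packets by $|Q^{\mu,\nu}(w_j)|\sim\lambda_{\nu,j}$, giving the family $W_j^{\lambda_j,\mu}$. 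Each of these coordinates ranges over $O(\log R')$ values, so a further pigeonhole restricts attention to fixed $(\nu_1,\nu_2,\mu,\lambda_1,\lambda_2)$ at an overall cost $(\log R')^{O(1)}$. Pulling $|c_{w_j}|\sim C_j$ out of the sum and applying \eqref{waveest} gives
\begin{align*}
\Bigl\|\sum_{q\in Q^\mu}\sum_{w_1\in W_1^{\lambda_1,\mu,\nu_1}(q)}\sum_{w_2\in W_2^{\lambda_2,\mu,\nu_2}(q)} c_{w_1}c_{w_2}\,p_{w_1}p_{w_2}\chi_q\Bigr\|_{L^p(Q(R'))}
\lesssim C_1C_2\,R'^\alpha\delta^{\frac52-\frac4p}|W_1|^{\frac12}|W_2|^{\frac12},
\end{align*}
which, using $C_j|W_j|^{1/2}\lesssim\|f_j\|_2$, is at most $R'^\alpha\delta^{5/2-4/p}\|f_1\|_2\|f_2\|_2$. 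Summing the $(\log R')^{O(1)}$ dyadic losses and using $(\log R')^{O(1)}\le R'^\gamma$ for $R'$ large yields $E(\alpha+\gamma)$.

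\textbf{Main obstacle.} The only genuinely new point compared with the classical reduction is that the hollow-tube scale $\nu_j$ must be allowed to run all the way up to $\nu_{\rm max}\sim R'^{1-2\gamma}$ in order to account for the slow decay (P3) of the rough wave packets, and one must check that (i) wave packets of tiny coefficient and (ii) scales $\nu_j$ near $\nu_{\rm max}$ really do produce only negligible contributions outside the hypothesis. Both checks reduce to the $(1+d_j)^{-1}R^{-1}$ bounds of Lemma \ref{franz}, which provide just enough decay to compensate the $O(\log R')$ count of scales. Once this is established the rest is routine combinatorial bookkeeping.
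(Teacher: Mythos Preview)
Your proposal is correct and follows the standard pigeonholing reduction that the paper itself has in mind; indeed the paper does not write out a proof of this lemma at all, and simply remarks afterwards that the loss is in fact only logarithmic rather than $R'^\gamma$.

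One point that does need care in your Step~2: as defined in the paper, $\mu=(\mu_{\nu,j})_{\nu\in\D,\,j=1,2}$ and $\lambda_j=(\lambda_{\nu,j})_{\nu\in\D}$ are tuples with $|\D|\sim\log R'$ many coordinates, so a naive count of all such tuples gives $(\log R')^{O(\log R')}=R'^{O(\log\log R')}$, not the $(\log R')^{O(1)}$ you claim, and this would \emph{not} be absorbable into $R'^\gamma$ for fixed $\gamma$. The fix is that, after fixing $\nu_1,\nu_2\in\D$, you only need to pigeonhole on the $O(1)$ components that are actually used in the subsequent argument (namely $\mu_{\nu_1,1},\mu_{\nu_2,2},\lambda_{\nu_1,1},\lambda_{\nu_2,2}$, cf.\ Lemma~\ref{geom} and the completion of the proof); this restores the polylogarithmic count and is what underlies the paper's remark about logarithmic loss.
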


\begin{remark} There is actually only a logarithmic loss in place of $R'^\gamma$ in Lemma \ref{reduct}, but we will not dwell on that.
\end{remark}

Therefore we  may and shall  assume from now on  that $W_1\subset\W_1$, $W_2\subset\W_2$, and  $\lambda_1,\lambda_2$ and $\mu$ are fixed.

\smallskip

Following the next step in the bilinear method,  we divide $Q(R')$ into $R'^{\mathcal{O}(\gamma)}$ cubes $b$ which are translates of ${R'}^{-4\gamma}Q(R')=Q(R'^{1-2\gamma})$. For a fixed tube $T_{w_j}$, we denote by $b(w_j)$  a cube $b$ for which the cardinality of $\{q\in Q^{\mu,1}(w_j):q\cap b\neq\emptyset\}$ is maximal over all $b.$ If there are several such $b$, we pick one of them.

\smallskip
We say that $ w_j$ and $b$ are related and write $w_j\sim b,$ if $b$ is a neighbor of $b(w_j)$. If we split the set of integration in \eqref{waveest} into
\begin{eqnarray*}
&&\hskip-0.5cm\|\sum_{q\in Q^\mu}\sum_{w_1\in W_1^{\lambda_1,\mu,\nu_1}(q)}\sum_{w_2\in W_2^{\lambda_2,\mu,\nu_2}(q)} p_{w_1}p_{w_2}\chi_q  \|^p_{L^p(Q(R))}	\\
	&=& \sum_b \|\sum_{q\in Q^\mu}\sum_{w_1\in W_1^{\lambda_1,\mu,\nu_1}(q)}\sum_{w_2\in W_2^{\lambda_2,\mu,\nu_2}(q)} p_{w_1}p_{w_2}\chi_q  \|^p_{L^p(b)}	\\
	&\le & \sum_b I^{\sim b} + \sum_b I^{\not\sim b},
\end{eqnarray*}
where  $I^{\sim b}$ denotes the contribution to  $\|\sum_{q\in Q^\mu}\sum_{w_1\in W_1^{\lambda_1,\mu,\nu_1}(q)}\sum_{w_2\in W_2^{\lambda_2,\mu,\nu_2}(q)} p_{w_1}p_{w_2}\chi_q  \|^p_{L^p(b)}$ by all  $w_1\sim b$ and $w_2\sim b,$ and $I^{\not\sim b}$ the contribution by all $w_1$ and $w_2$ such that $w_1\not\sim b$ or $w_2\not\sim b.$ 

For any fixed $b$, as in the classical bilinear argument, the part $I^{\sim b}$  can  be estimated applying the induction hypothesis $E(\alpha)$ on the cube $b,$ which is a translate of $Q(R'^{1-2\gamma})$. Then the sum over  all $b$'s can eventually be controlled since for any given $w_j$ there are only $\mathcal{O}(1)$ $b's$ such that  $b\sim w_j,$  and we obtain 
\begin{equation}\label{simb}
	\sum_b I^{\sim b} \lesssim C_{p,\alpha} R'^{\alpha(1-2\gamma)} \, \delta^{\frac{5}{2}-\frac{4}{p}}
	|W_1|^{1/2}|W_2|^{1/2}.
\end{equation}
We omit the details, since this part is still standard. 

\medskip

In order to estimate the remainder part $I^{\not\sim b}$, let us define
$$
	W_j^{\lambda_j,\mu,\not\sim b}:=\{w_j\in W_j^{\lambda_j,\mu}:w_j\not\sim b\}.
$$
Then it remains to prove that for some absolute constant $c>0$ independent of $R,W_1,W_2,\delta,\gamma$ and  $\la, \mu, \nu$ and any $b$ we have an estimate of the form
\begin{align}\label{waveest2}
	\|\sum_{q\in Q^\mu}\sum_{w_1\in W_1^{\lambda_1,\mu,\nu_1,\not\sim b}(q)}\sum_{w_2\in W_2^{\lambda_2,\mu,\nu_2}(q)} p_{w_1}p_{w_2}\chi_q \|^p_{L^p(b)}
	\leq C_{p,\alpha} R'^{c\gamma} \, \delta^{\frac{5}{2}-\frac{4}{p}}
	|W_1|^{1/2}|W_2|^{1/2},
\end{align}
and an analogous  estimate  with the roles of $W_1$ and $W_2$ interchanged.
\medskip

Indeed,  given these estimates, it is easy to complete the proof of Theorem \ref{bilinear}.  To this end,  notice that summing these estimates over  all $b$ merely increases the constant $c$ in the exponent of $R'^{c\delta},$ since there are only $R'^{\Landau(\gamma)}$ cubes $b$. Combining then \eqref{simb} and \eqref{waveest2}, we obtain
\begin{align}
	\|\sum_{q\in Q^\mu}\sum_{w_1\in W_1^{\lambda_1,\mu,\nu_1}(q)}\sum_{w_2\in W_2^{\lambda_2,\mu,\nu_2}(q)} p_{w_1}p_{w_2}\chi_q  \|_{L^p(Q(R))}
	\leq C_{p,\alpha} R'^{\alpha(1-2\gamma)\vee c\gamma} \delta^{\frac{5}{2}-\frac{4}{p}}
	|W_1|^{1/2}|W_2|^{1/2}.
\end{align}
Hence, by Lemma \ref{reduct}, we see that $E(\alpha)$ implies $E(\alpha(1-\gamma)\vee c\gamma)$ (with possibly yet another constant $c$). 

We should mention here that the base case for the induction on scales is not that straight forward. Usually some crude estimate with a big loss in the power of $R'$ will suffice, but we need the sharp dependency on $\delta$ in \eqref{locbilinest}. However, our argument from \cite{bmv16} and \cite{bmv17}  does apply also here (compare Remark \ref{basecase}).
 Thus,  by induction, we see that $E(\alpha)$ holds for arbitrary small $\alpha$, which proves Theorem \ref{bilinear}, provided we have  established \eqref{waveest2}.

%%%%%%%%%%%%%%%%%%%%%%%%%%%%%%%%%%%%%%%%%%%%%%%%%%%%%%%%%%%%%%%%%%%%%
\subsection{The geometric argument}
A key element of the bilinear method is a sophisticated geometric argument.  It allows to estimates the number of tubes whose directions move according to  normals to the surface  along certain points on an intersection curve of the surfaces $S_1$ and $S_2$. By an intersection curve, we mean the following:  the patches of surface $S_1$ and $S_2$ are disjoint, but  by means of suitable translations we may achieve that they do intersect along a curve.  

More precisely,  for any $v'_1\in\V_1$ and  $v'_2\in\V_2,$  we define  the intersection curve $\Pi_{v'_1,v'_2}:= \big(S_1-(v'_1,\phi(v'_1))\big)\cap\big(S_2-(v'_2,\phi(v'_2))\big).$ Moreover, if  $W_1\subset\W_1$,  we put 
$$
[W_1]^{\Pi_{v'_1,v'_2}}:=\{w_1=(\eta_1,v_1)\in W_1: (v_1-v_1',\phi(v_1)-\phi(v_1'))\in \Pi_{v'_1,v'_2}+\mathcal{O}(R^{-1})\}.
$$
Note that  it is only the component $v_1$ of $w_1$ which is relevant for $w_1$ to be in  $[W_1]^{\Pi_{v'_1,v'_2}}.$ We therefore also introduce the projection of this set to all of its $v_1$-components, i.e., 
$$
[W_1]^{\Pi_{v'_1,v'_2}}_{\V_1}:=\{v_1\in \V_1: \exists \eta_1 \text{ such that } (\eta_1,v_1)\in [W_1]^{\Pi_{v'_1,v'_2}}\}\subset \V_1.
$$

There is an a priori estimate on the cardinality of $[W_1]^{\Pi_{v'_1,v'_2}}_{\V_1}$: As the projection of any intersection curve $(S_1-(v'_1,\phi(v'_1)))\cap (S_2-(v'_2,\phi(v'_2)))$ to the $z$-plane intersects the set $U_1-v_1'$ diagonally (compare \eqref{tangent}), that intersection curve has length $\lesssim\delta$. Since  by \eqref{grad1} the directions of the tubes are $1/R$-separated, we have for all $W_1\subset\W_1$
\begin{align}\label{aprioriintersect}
	\sup_{v_1'\in\W_1,v_2'\in\V_2}\Big |[W_1]^{\Pi_{v'_1,v'_2}}_{\V_1}\Big |\lesssim \delta R=R'.
\end{align}

 In a similar way, we define the sets $[W_2]^{\Pi_{v'_1,v'_2}}\subset W_2$ and $[W_2]^{\Pi_{v'_1,v'_2}}_{\V_2}\subset \V_2$  simply by interchanging the roles of $S_1$ and $S_2,$ and obtain the analogue  to \eqref{aprioriintersect} for $\Big |[W_2]^{\Pi_{v'_1,v'_2}}_{\V_2}\Big |.$
 
 If we consider the ``cone''  generated by the  family of all tubes $T_{w_1}$ which pass through some fixed  cube $q_0$ and whose directions are  given by the normals to $S_1$ at all points $(v_1,\phi(v_1))$  with $v_1\in [W_1]^{\Pi_{v'_1,v'_2}}_{\V_1}$ (for some $v_1',v_2'$), then  the geometric meaning of our transversality assumption $\eqref{tv3012}$ is exactly that all tubes $T_{w_2}$ of type 2  pass transversally through this cone. The  analogous statement holds true  for the ``cone''  generated by the  family of all tubes $T_{w_2}$ which pass through some fixed cube $q_0$ and whose directions are  given by the normals to $S_2$ at all points $(v_2,\phi(v_2))$  with $v_2\in [W_2]^{\Pi_{v'_1,v'_2}}.$

\begin{figure}
\begin{center}  \includegraphics{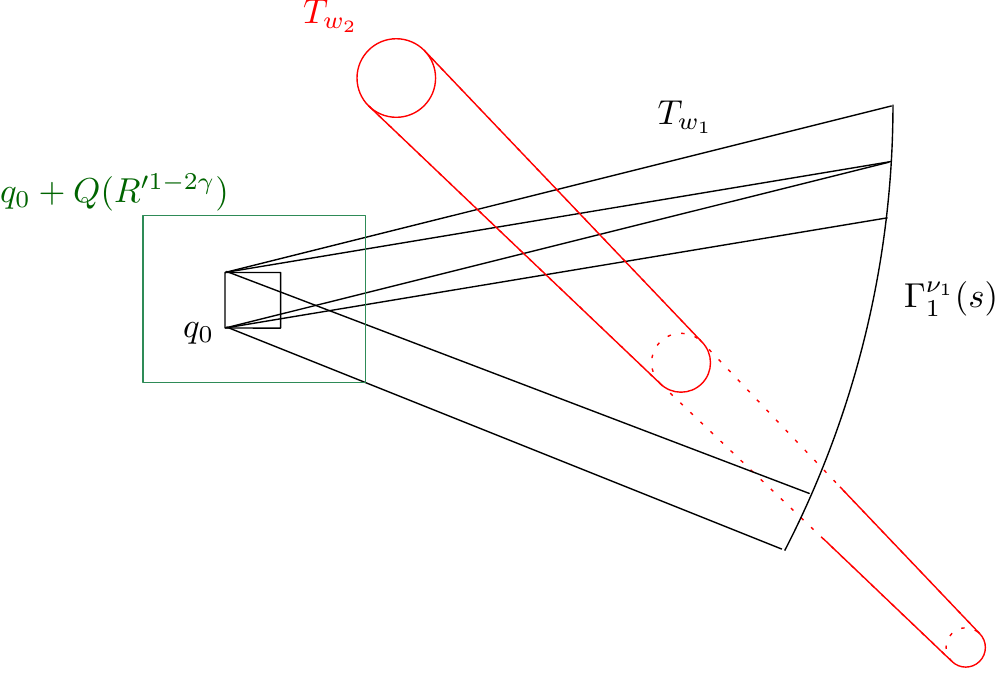} \end{center}
\caption{The geometry in Lemma \ref{geom}}
\label{cone}       % Give a unique label
\end{figure}

 \smallskip
 
These facts are crucial for the following lemma.
\color{black}

\begin{lemma}\label{geom}
For all dyadic $\mu$, $\lambda$, $1\le \nu_1,\nu_2< \nu_{\rm max}$, all  cubes $q_0$ and $v'_1\in\V_1,v'_2\in\V_2$ we have
\begin{eqnarray}\label{eq:geom}
\lambda_{\nu_1,1}\mu_{\nu_2,2} \Big |[W_1^{\lambda_1,\mu,\nu_1,\not\sim b}(q_0)]^{\Pi_{v'_1,v'_2}}\big |
	&\lesssim& {R'}^{c\gamma} \nu_1^2\nu_2^2 |W_2|,	\\
	\lambda_{\nu_2,2}\mu_{\nu_1,1} \big |[W_2^{\lambda_2,\mu,\nu_2,\not\sim b}(q_0)]^{\Pi_{v'_1,v'_2}}\big |
	&\lesssim& R'^{c\gamma} \nu_1^2\nu_2^2 |W_1|. \label{eq:geom2}
\end{eqnarray}
\end{lemma}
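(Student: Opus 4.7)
By the symmetric roles of $S_1$ and $S_2$ in the construction, it suffices to establish \eqref{eq:geom}; the estimate \eqref{eq:geom2} then follows by interchanging the indices. The plan is a double counting argument applied to the set of triples
$$
\mathcal N := \bigl\{(w_1, q, w_2) : w_1 \in [W_1^{\lambda_1,\mu,\nu_1,\not\sim b}(q_0)]^{\Pi_{v'_1,v'_2}},\ q \in Q^{\mu,\nu_1}(w_1),\ w_2 \in W_2^{\lambda_2,\mu,\nu_2}(q) \bigr\}.
$$
The lower bound on $|\mathcal N|$ is immediate from the dyadic definitions: each admissible $w_1$ contributes $\sim \lambda_{\nu_1,1}$ cubes $q$, and each such $q$ contributes $\sim \mu_{\nu_2,2}$ choices of $w_2$ (after the standard popularity reduction inside $Q^\mu$), so that
$$
|\mathcal N| \gtrsim \lambda_{\nu_1,1}\, \mu_{\nu_2,2}\, \bigl|[W_1^{\lambda_1,\mu,\nu_1,\not\sim b}(q_0)]^{\Pi_{v'_1,v'_2}}\bigr|.
$$

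The core of the proof is the matching upper bound $|\mathcal N| \lesssim R'^{c\gamma}\nu_1^2\nu_2^2 |W_2|$. I would obtain it by fixing $w_2 \in W_2$, bounding the number $M(w_2)$ of admissible pairs $(w_1, q)$ by $R'^{c\gamma}\nu_1^2\nu_2^2$, and then summing in $w_2$. Fix $w_2$. As explained in the paragraph containing \eqref{aprioriintersect} and illustrated in Figure \ref{cone}, for $w_1 = (\eta_1, v_1)$ to be admissible the direction $v_1$ must lie on the discrete intersection curve set $[W_1]^{\Pi_{v'_1,v'_2}}_{\V_1}$ (of cardinality $\lesssim R'$), the tube $T_{w_1}^{\nu_1}$ must pass through $q_0$, and for the pair to contribute it must also meet $T_{w_2}^{\nu_2}$ at the cube $q$. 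The transversality estimate \eqref{tv3012} for $TV_2$ says that the 2-dimensional cone through $q_0$ generated by these tubes is uniformly transversal to the direction of $T_{w_2}$. Made quantitative, this pins $v_1$ down to a sub-arc containing $\lesssim R'^{c\gamma}(\nu_1+\nu_2)$ discrete values; for each such $v_1$, the passage of $T_{(\eta_1, v_1)}^{\nu_1}$ through $q_0$ further constrains $\eta_1 \in R\Z^2$ to $\lesssim \nu_1^2 R'^{c\gamma}$ values (lattice points of $R\Z^2$ in a disc of radius $\nu_1 R R'^\gamma$); and finally, for each resulting $w_1$, the transversal intersection $T_{w_1}^{\nu_1} \cap T_{w_2}^{\nu_2}$ has volume $\lesssim \nu_1\nu_2\min(\nu_1,\nu_2)\, R^3 R'^{c\gamma}$, hence contains $\lesssim \nu_1\nu_2\min(\nu_1,\nu_2)\, R'^{c\gamma}$ cubes of side $R$. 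Multiplying these counts and absorbing lower powers of $\nu_j$ into $\nu_1^2\nu_2^2$ (permissible since $\nu_j \ge 1$) delivers $M(w_2) \lesssim R'^{c\gamma}\nu_1^2\nu_2^2$.

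The main obstacle will be making the transversality \eqref{tv3012} quantitatively effective at the thickened scales $\nu_j R R'^\gamma$ (which may reach up to $R^{1-\gamma}$), and controlling the curvature of the cone $\mathcal C$ well enough that it remains transversal to $T_{w_2}$ after the $\nu_j$-thickening. For this I would rely on the uniform lower bound $|TV_i^s|\sim 1$ supplied by Lemma \ref{transvscaled}, the Hessian and derivative bounds in \eqref{cur}--\eqref{deriv}, and the a priori length estimate \eqref{aprioriintersect}. Once this geometric step is in hand, combining the upper and lower bounds on $|\mathcal N|$ and dividing by $|W_2|$ yields \eqref{eq:geom}, and hence, by the symmetry argument above, also \eqref{eq:geom2}.
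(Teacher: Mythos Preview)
Your double-counting framework via the triple set $\mathcal N$ is sound and matches the paper's overall strategy, but the specific way you bound $M(w_2)$ loses a factor of $\nu_1^2$ and therefore does not prove the lemma. If you multiply your three counts --- $(\nu_1+\nu_2)$ directions $v_1$, $\nu_1^2$ choices of $\eta_1$, and $\nu_1\nu_2\min(\nu_1,\nu_2)$ cubes $q$ --- you get (say for $\nu_1\le\nu_2$) roughly $\nu_2\cdot\nu_1^2\cdot\nu_1^2\nu_2=\nu_1^4\nu_2^2$, not $\nu_1^2\nu_2^2$. These are not ``lower powers'' that can be absorbed: since $\nu_1$ may be as large as $\nu_{\rm max}\sim R'^{1-2\gamma}$, the surplus $\nu_1^2$ destroys the subsequent $L^2$ estimate, where the $\nu_1^{-2}\nu_2^{-2}$ gain from Lemma~\ref{franz} must exactly cancel the $\nu_1^2\nu_2^2$ from this lemma.

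The overcounting comes from fixing $w_1$ first and then bounding $|\{q\}|$ uniformly by the tube-intersection volume: most of the $\nu_1^2(\nu_1+\nu_2)$ admissible $w_1$'s have $T_{w_1}^{\nu_1}\cap T_{w_2}^{\nu_2}=\emptyset$, so your uniform cube count is far from sharp. The paper avoids this by counting in the opposite order: it first passes to the cone $\Gamma_1^{\nu_1}$ (the union of all the relevant hollow tubes through $q_0$, with the vertex region removed), observes that on this cone the tubes $T_{w_1}^{\nu_1}$ overlap only $\mathcal O(\nu_1)$-fold, then layers the cone into $\nu_1$ thin shells, and finally uses transversality to see that $T_{w_2}^{\nu_2}$ meets each thin shell in a cap containing $\lesssim\nu_2^2$ cubes. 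That organization yields $\nu_1\cdot\nu_1\cdot\nu_2^2=\nu_1^2\nu_2^2$ with no loss. Your argument can be repaired along these lines: bound $M(w_2)$ by $(\text{number of }q\text{ in }\Gamma_1^{\nu_1}\cap T_{w_2}^{\nu_2})\times(\text{overlap of }w_1\text{'s at each }q)$ rather than by $(\text{number of }w_1)\times(\text{cubes per }w_1)$.
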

\begin{remark} The classical version of this argument corresponds to the case $\nu_1=\nu_2=1$, cf. \cite{lee05}.
\end{remark}

\begin{proof}
We shall only prove \eqref{eq:geom}, the proof of  \eqref{eq:geom2} being analogous.

Let us  consider the ``cone''  generated  by the tubes $T_{w_1}^{\nu_1}$  associated to all   $w_1\in [W_1^{\lambda_1,\mu,\nu_1,\not\sim b}(q_0)]^{\Pi_{v_1,v_2}}$, however, without the part near $q_0$ where the tubes overlap. More precisely, we define
$$
\Gamma_1^{\nu_1}:=\bigcup\{T_{w_1}^{\nu_1}: w_1\in [W_1^{\lambda_1,\mu,\nu_1,\not\sim b}(q_0)]^{\Pi_{v'_1,v'_2}}\}\setminus(q_0+Q(R'^{1-2\gamma}))
$$
and the corresponding set of cubes $q$ by
$$
Q^{\nu_1}_{\Gamma_1} := \{q\in Q^\mu: q\cap \Gamma_1^{\nu_1}\neq\emptyset\}.
$$
Since the directions of the tubes are only allowed to move as the normals to $S_1$ corresponding to   points on the  intersection curve $\Pi_{v'_1,v'_2}$, the set $\Gamma_1^{\nu_1}$ is indeed a part of a cone thickened by $\nu_1R R'^{\gamma}$. We may decompose  $\Gamma_1^{\nu_1}$ into disjoint layers  $\Gamma_1^{\nu_1}(s)$, $s=1,\ldots,\nu_1,$ which are  essentially translates of $\Gamma_1^{1}$, that is, parts of a cone thickened by $R R'^\gamma$. Correspondingly, let
$$Q^{\nu_1}_{\Gamma_1}(s) := \{q\in Q^\mu: q\cap \Gamma_1^{\nu_1}(s)\neq\emptyset\}, \qquad s=1,\ldots,\nu_1.$$
The proof will be accomplished  by verifying the following three estimates:
\medskip

\begin{enumerate}
\item \quad $\lambda_{\nu_1,1}\Big|[W_1^{\lambda_1,\mu,\nu_1,\not\sim b}(q_0)]^{\Pi_{v'_1,v'_2}}\Big|
		\lesssim R'^{c\gamma}\nu_1 |Q^{\nu_1}_{\Gamma_1}|:$
\item\quad  $|Q^{\nu_1}_{\Gamma_1}| \leq \sum\limits_{s=1}^{\nu_1} |Q^{\nu_1}_{\Gamma_1}(s)|
		\leq {\nu_1} \sup\limits_s|Q^{\nu_1}_{\Gamma_1}(s)|;$
\item \quad $\mu_{\nu_2,2}\, \sup_s|Q^{\nu_1}_{\Gamma_1}(s)|\lesssim R'^{c\gamma}\nu_2^2|W_2|.$
\end{enumerate}

\smallskip

It is obvious that (i) -- (iii) imply estimate  \eqref{eq:geom}.  Estimate  (ii) is clear too.
\smallskip

To verify  (i),  we observe that $\lambda_{\nu_1,1} \sim |\{q\in Q^\mu: T_{w_1}^{\nu_1}\cap q\neq\emptyset\}|$ for any $w_1\in W_1^{\lambda_1,\mu}$, hence by Fubini
\begin{align*}
	\lambda_{\nu_1,1}\Big|[W_1^{\lambda_1,\mu,\nu_1,\not\sim b}(q_0)]^{\Pi_{v'_1,v'_2}}\Big|
	\lesssim&  \sum_{w_1\in[W_1^{\lambda_1,\mu,\nu_1,\not\sim b}(q_0)]^{\Pi_{v'_1,v'_2}}} 
													|\{q\in Q^\mu: T_{w_1}^{\nu_1}\cap q\neq\emptyset\}|	\\
	\lesssim& \sum_{q\in Q^{\nu_1}_{\Gamma_1}}
					|\{w_1\in[W_1^{\lambda_1,\mu,\nu_1,\not\sim b}(q_0)]^{\Pi_{v'_1,v'_2}}: 
									T_{w_1}^{\nu_1}\cap q\neq\emptyset\}|	\\
	\lesssim& R'^{c\gamma} |Q^{\nu_1}_{\Gamma_1}|\nu_1 .																					
\end{align*}
The last estimate holds true because while the tubes $T_{w_1}$ passing through $q_0$ with directions given by normals to the surface
when moving along the intersection curve overlap at most of order $\mathcal{O}(R'^{c\gamma})$, our hollow thickened tubes $T_{w_1}^{\nu_1}$ overlap at most  of order $\mathcal{O}(\nu_1R'^{c\gamma})$.
\smallskip

It remains to prove  (iii).  To this end, let us fix an $s\in \{1,\dots, \nu_1\},$ i.e., a certain layer $\Gamma_1^{\nu_1}(s)$ of our  thickened cone. Note next that every  tube $T_{w_2}$ of type 2 is   transversal to the cone $\Gamma_1^{\nu_1},$  hence also to $\Gamma_1^{\nu_1}(s),$  and it  essentially cuts out a ``cap'' of radius $\nu_2R R'\gamma$ and thickness 
$R R'^\gamma$ from $\Gamma_1^{\nu_1}(s)$ (cf. picture 1).

 We  therefore decompose  $\Gamma_1^{\nu_1}(s)$ into caps  $P$ of dimensions  $\nu_2R R'\gamma\times \nu_2R R'\gamma\times R R'^\gamma.$ Of course we only need to consider those caps $P$ which interact with at least one of the $q$'s, and therefore   put
$$\mathfrak{P}^\mu_{\Gamma_1}:=\{\text { caps } P\subset\Gamma_1^{\nu_1}(s):\exists q\in Q^\mu:q\cap P\neq\emptyset\}.$$
Then
\begin{equation}\label{Qnuest}
|Q^{\nu_1}_{\Gamma_1}(s)|\le \sum_{P\in\mathfrak{P}^\mu_{\Gamma_1}} |\{q\in Q^{\nu_1}_{\Gamma_1}(s)|q\cap P\neq\emptyset\}| 
	\leq 
	\nu_2^2 R'^{2\gamma}|\mathfrak{P}^\mu_{\Gamma_1}|.
\end{equation}

On the other hand, for any $q\in Q^\mu$ such that $q\cap P\neq\emptyset$, we have 
$$\mu_{\nu_2,2}\sim |\{w_2\in W_2:T_{w_2}^{\nu_2}\cap q\neq\emptyset\}|\leq |\{w_2\in W_2:T_{w_2}^{\nu_2}\cap 2P\neq\emptyset\}|.
$$
Therefore by \eqref{Qnuest} and  Fubini's theorem
\begin{eqnarray*}
\mu_{\nu_2,2}|Q^{\nu_1}_{\Gamma_1}(s)| &\leq& \nu_2^2  R'^{2\gamma} \mu_{\nu_2,2} |\mathfrak{P}^\mu_{\Gamma_1}|	\\
	&\lesssim& \nu_2^2 R'^{2\gamma}\sum_{P\in\mathfrak{P}^\mu_{\Gamma_1}} |\{w_2\in W_2:T_{w_2}^{\nu_2}\cap 2P\neq\emptyset\}|		\\
	&\leq& \nu_2^2 R'^{2\gamma}\sum_{w_2\in W_2} |\{P\in\mathfrak{P}^\mu_{\Gamma_1}:T_{w_2}^{\nu_2}\cap 2P\neq\emptyset\}|	\\
	&\lesssim& R'^{c\gamma}\nu_2^2 |W_2|.
\end{eqnarray*}
The last inequality holds because the tubes $T_{w_2}$ are transversal to the cone $\Gamma_1^{\nu_1}.$
This  verifies (iii). 
\end{proof}

%%%%%%%%%%%%%%%%%%%%%%%%%%%%%%%%%%%%%%%%%%%%%%%%%%%%%%%%%
\subsection{Completion of the proof of Theorem \ref{bilinestn} }
We were still left with proving \eqref{waveest2}, i.e., the estimate
$$
	\|\sum_{q\in Q^\mu}\sum_{w_1\in W_1^{\lambda_1,\mu,\nu_1,\not\sim b}(q)}\sum_{w_2\in W_2^{\lambda_2,\mu,\nu_2}(q)} p_{w_1}p_{w_2}\chi_q \|^p_{L^p(b)}
	\leq C_{p,\alpha} R'^{c\gamma} \, \delta^{\frac{5}{2}-\frac{4}{p}}
	|W_1|^{1/2}|W_2|^{1/2}.
$$
As usual, this is accomplished by interpolating an $L^1$ and an $L^2$-estimate. The $L^1$-estimate is the usual straight-forward application of Cauchy-Schwarz' inequality:
\begin{eqnarray}
&&\big\|\sum_{q\in Q^\mu}\sum_{w_1\in W_1^{\lambda_1,\mu,\nu_1,\not\sim b}(q)}\sum_{w_2\in W_2^{\lambda_2,\mu,\nu_2}(q)}  p_{w_1}p_{w_2}\chi_q \big\|_{L^1(b)}\nonumber \\
	&\leq& \big\|\sum_{w_1\in W_1^{\lambda_1,\mu,\not\sim b}} p_{w_1}\big\|_{L^2(b)}\cdot\big\|\sum_{w_2\in W_2^{\lambda_2,\mu}} p_{w_2}\big\|_{L^2(b)} 
	\nonumber \\
	&\lesssim& (R^2\cdot\delta R^2)^{1/2}\ |W_1|^{1/2}|W_2|^{1/2}	\nonumber\\
	&=&\delta^{-3/2} R'^2\ |W_1|^{1/2}|W_2|^{1/2}.\label{ellone}
\end{eqnarray}

	For the $L^2$-estimate, in order to defray the notation,  let us abbreviate $W_1^0(q):=W_1^{\lambda_1,\mu,\nu_1,\not\sim b}(q)$, $W_2^0(q):=W_2^{\lambda_2,\mu,\nu_2}(q)$.
	
	 Recall also  that $\chi_q$ is not the characteristic function of $q$, but a smooth version of it which  decays rapidly away from $q$. Nevertheless, Schur's test (cf. the similar argument on p. 856 in \cite{bmv16}) implies that 
$$
	\big\|\sum_{q\in Q^\mu}\sum_{w_1\in W_1^0(q)}\sum_{w_2\in W_2^0(q)}  p_{w_1}p_{w_2}\chi_q \big\|^2_{L^2(b)}
	\lesssim \sum_{q\in Q^\mu} \big\|\sum_{w_1\in W_1^0(q)}\sum_{w_2\in W_2^0(q)}  p_{w_1}p_{w_2}\chi_q \big\|^2_{L^2(b)}.
$$
We proceed in the usual way by applying Plancherel's theorem  in order to exploit information on the Fourier support of the wave packets. It is here where  we need \eqref{pdos} to keep track of the localization by $\chi_q:$
\begin{eqnarray*}
&&\big\|\sum_{q\in Q^\mu}\sum_{w_1\in W_1^{0}(q)}\sum_{w_2\in W_2^0(q)}  p_{w_1}p_{w_2}\chi_q \big\|^2_{L^2(b)}	\\
  &\lesssim& \sum_{q\in Q^\mu}
   \sum_{w_1,w_1'\in W_1^0(q)}\sum_{w_2,w_2'\in W_2^0(q)}
  \int  p_{w_1}p_{w_2'}\overline p_{w_1'}\overline p_{w_2}\chi_q	\\
 & \lesssim& \sum_{q\in Q^\mu}\ \sum_{\substack{w_1\in W_1^0(q)\\w_2\in W_2^0(q)}}\
  \sum_{v_1'\in[W_1^0(q)]^{\Pi_{v_1,v_2}}}  |q|\,  \Big  \|p_{w_1}p_{w_2}	\sum_{\underset{(\eta_1',v'_1)\in W_1^0(q)}{\eta_1':}}\sum_{\underset{v_1+v_2'=v_1'+v_2}{w_2'\in W_2^0(q):}} p_{w'_1}p_{w'_2}\chi_q \Big \|_{L^\infty}.
 \end{eqnarray*}
 Note also that 
 \begin{eqnarray} \label{inftyest}
&& \hskip1cm \Big  \|p_{w_1}p_{w_2}	\sum_{\underset{(\eta_1',v_1)\in W_1^0(q)}{\eta_1':}}\sum_{\underset{v_1+v_2'=v_1'+v_2}{w_2'\in W_2^0(q):}} p_{w'_1}p_{w'_2}\chi_q \Big \|_{L^\infty} \\
&\le& \big\| p_{w_1} |\chi_q|^{1/4}\big\|_{L^\infty} \, \big\| p_{w_2} |\chi_q|^{1/4}\big\|_{L^\infty} \, \big\| \sum_{\underset{w_1'=(\eta_1',v'_1)\in W_1^0(q)}{\eta_1':}} p_{w'_1}|\chi_q|^{1/4}\big\|_{L^\infty} \, 
\big\|\sum_{\underset{v_2'=v_1'+v_2-v_1}{w_2'\in W_2^0(q):}}p_{w'_2} |\chi_q|^{1/4}\big\|_{L^\infty}.\nonumber
\end{eqnarray} 
Moreover,  if $w_1'\in W_1^0(q),$ then  $T^{\nu_1}_{w'_1} \cap q\ne \emptyset, $ and thus $\dist (T_{w'_1}, q)\gtrsim \nu_1 R R'^\gamma\ge \nu_1 R$ provided  $\nu_1>1,$  because  then 
$T^{\nu_1}_{w'_1}$ is a hollow tube. Lemma \ref{franz} (i)  with $d_1:=\nu_1$  (actually with $|\chi_q|^{1/4}$ in place  of $\chi_q$) then implies  that 
$$
\big\| \sum_{\underset{w_1'=(\eta_1',v'_1)\in W_1^0(q)}{\eta_1':}} p_{w'_1}|\chi_q|^{1/4}\big\|_{L^\infty} \lesssim \nu_1^{-1} R^{-1}.
$$
If $\nu_1=1,$ we can still apply Lemma \ref{franz} (i), however now with $d_1:=0,$ and arrive at the same estimate.  In a similar way,  Lemma \ref{franz} (ii)  allows to estimate the last factor in \eqref{inftyest}, and altogether we find  that 
$$
 \Big  \|p_{w_1}p_{w_2}	\sum_{\underset{(\eta_1',v_1)\in W_1^0(q)}{\eta_1':}}\sum_{\underset{v_1+v_2'=v_1'+v_2}{w_2'\in W_2^0(q):}} p_{w'_1}p_{w'_2}\chi_q \Big \|_{L^\infty}\lesssim \nu_1^{-2} \nu_2^{-2} R^{-4}.
 $$
We thus conclude that  
\begin{eqnarray}\label{03jan1302}
&&\big\|\sum_{q\in Q^\mu}\sum_{w_1\in W_1^{0}(q)}\sum_{w_2\in W_2^0(q)}  p_{w_1}p_{w_2}\chi_q \|^2_{L^2(b)}	\nonumber\\
	&\lesssim& R^{-1} \nu_1^{-2}\nu_2^{-2} \sum_{q\in Q^\mu} |W_1^{\lambda_1,\mu,\nu_1,\not\sim b}(q)|\ 
							|W_2^{\lambda_2,\mu,\nu_2}(q)|\ \sup_{w_1,w_2}\Big|[W_1^{\lambda_1,\mu,\nu_1,\not\sim b}(q)]^{\Pi_{v_1,v_2}}\Big|.
\end{eqnarray}

In the case that $\nu_1,\nu_2< \nu_{\rm max}$, we can further apply the geometric Lemma \ref{geom}:
\begin{eqnarray*}
&&\big\|\sum_{q\in Q^\mu}\sum_{w_1\in W_1^{\lambda_1,\mu,\nu_1,\not\sim b}(q)}\sum_{w_2\in W_2^{\lambda_2,\mu,\nu_2}(q)}  p_{w_1}p_{w_2}\chi_q \big\|^2_{L^2(b)}	\\
	&\lesssim& {R'}^{c\gamma}R^{-1} \sum_{q\in Q^\mu} |W_1^{\lambda_1,\mu,\nu_1,\not\sim b}(q)|\frac{|W_2^{\lambda_2,\mu,\nu_2}(q)|}{\mu_{\nu_2,2}} \frac{|W_2|}{\lambda_{\nu_1,1}}	\\
	&\lesssim& {R'}^{c\gamma}R^{-1} \sum_{w_1\in W_1^{\lambda_1,\mu,\nu_1,\not\sim b}} \frac{|Q^{\mu,\nu_1}(w_1)|}{\lambda_{\nu_1,1}} |W_2|	\\
	&\lesssim& {R'}^{c\gamma}R^{-1} |W_1|\,|W_2|,
\end{eqnarray*}
since 
$|W_2^{\lambda_2,\mu,\nu_2}(q)|/\mu_{\nu_2,2}\sim 1\sim  |Q^{\mu,\nu_1}(w_1)| /\lambda_{\nu_1,1}.$ In the second last inequality, we have again applied Fubini's theorem.

In the case that $\nu_1= \nu_{\rm max}$ or $\nu_2= \nu_{\rm max}$, we can use simpler estimates. Note that $\nu_1\vee\nu_2= \nu_{\rm max}\sim {R'}^{1-2\gamma} $,
whereas by \eqref{aprioriintersect} we have  $\Big|[W_1^{\lambda_1,\mu,\nu_1,\not\sim b}(q)]_{\V_1}^{\Pi_{v_1,v_2}}\Big|\lesssim R'$.
Hence \eqref{03jan1302} and Fubini's theorem  give
\begin{eqnarray*}
	&&\big\|\sum_{q\in Q^\mu}\sum_{w_1\in W_1^{0}(q)}\sum_{w_2\in W_2^0(q)}  p_{w_1}p_{w_2}\chi_q \big\|^2_{L^2(b)}	\\
	&\lesssim& R^{-1} \nu_1^{-2}\nu_2^{-2} \sum_{w_1\in W_1}\sum_{w_2\in W_2} |\{q\in Q^\mu:q\cap T_{w_1}^{\nu_1}\neq\emptyset\neq q\cap T_{w_2}^{\nu_2}\}|\ R' \\
	&\lesssim& R^{-1} |W_1|\ |W_2|\ \nu_1^{-1}\nu_2^{-1} (\nu_1\wedge\nu_2)\ R'	\\
	&\lesssim& R'^{c\gamma} R^{-1} |W_1|\ |W_2|. 
\end{eqnarray*}
For the second last inequality, we used that $|T_{w_1}^{\nu_1}\cap T_{w_2}^{\nu_2}| \sim \nu_1^2 \nu_2 |q|$ if, say, $\nu_1\le \nu_2.$ 
Altogether, in both cases we obtain the estimate
\begin{eqnarray*}
&&\big\|\sum_{q\in Q^\mu}\sum_{w_1\in W_1^{\lambda_1,\mu,\nu_1,\not\sim b}(q)}\sum_{w_2\in W_2^{\lambda_2,\mu,\nu_2}(q)}  p_{w_1}p_{w_2}\chi_q \big\|_{L^2(b)} \\
	&\lesssim& {R'}^{c\gamma} R^{-1/2} |W_1|^{1/2}|W_2|^{1/2}  \nonumber\\
	&=& \delta^{1/2} R'^{-1/2}{R'}^{c\gamma} |W_1|^{1/2}|W_2|^{1/2}.
\end{eqnarray*}

Interpolating this with the $L^1$ estimate \eqref{ellone}, we obtain $\eqref{waveest2},$ provided $p>5/3$.

\begin{remark}\label{basecase} For the base case of our induction, i.e., \eqref{locbilinest} with any big power $\alpha$ of $R'$, it is again enough to estimate the corresponding wave packets
$$\|\sum_{w_1\in W_1}\sum_{w_2\in W_2}p_{w_1}p_{w_2} \|_p \lesssim {R'}^{\alpha} |W_1|^{1/2}|W_2|^{1/2}.$$
The $L^1$ estimate is similar as before, and for the $L^2$-estimate we can begin similarly as above, using Lemma \ref{franz} (i), (ii) and now also (iii) to see that 
\begin{eqnarray*}
	&&\big\|\sum_{w_1\in W_1}\sum_{w_2\in W_2}  p_{w_1}p_{w_2} \big\|^2_{2}	\\
	&\lesssim& 
   \sum_{\substack{w_1\in W_1\\w_2\in W_2}}\sum_{v_1'\in[W_1]^{\Pi_{v_1,v_2}}}\sum_{\underset{v_1+v_2'=v_1'+v_2}{v_2':}}
  \int  |p_{w_1} p_{w_2}| \,d(\xi,\tau)\	\Big\|\sum_{\eta'_1} p_{w_1'}\Big\|_{L^\infty} \Big\|\sum_{\eta'_2} p_{w_2'}\Big\|_{L^\infty} \\	
	&\lesssim& R^{-1} |W_1||W_2|\ \sup_{v_1,v_2}\Big|[W_1]_{\V_1}^{\Pi_{v_1,v_2}}\Big|.
\end{eqnarray*}
Applying \eqref{aprioriintersect} now and interpolating with the $L^1$ estimate gives the correct power of $\delta$.
\end{remark}

\color{black}

%\end{proof}

%%%%%%%%%%%%%%%%%%%%%%%%%%%%%%%%%%%%%%%%%%%%%%%%%%%%%%%%%%%%%%%%%%%%%%%%%%%%%%
%%%%%%%%%%% Section 5
%%%%%%%%%%%%%%%%%%%%%%%%%%%%%%%%%%%%%%%%%%%%%%%%%%%%%%%%%%%%%%%%%%%%%%%%%%%%%%

\setcounter{equation}{0}
\section{Passage to linear restriction estimates  and proof of Theorem \ref{mainresult}}\label{bilinlin}
To prove Theorem \ref{mainresult}, assume that $r>10/3$ and $1/q'>2/r$ and put $p:=r/2,$
so that $p>5/3$, $1/q'>1/p.$ By interpolation with the trivial estimate for
$r=\infty,q=1,$
it is enough to prove the result  for $r$ close to $10/3$ and  $q$ close to 5/2, i.e.,
$p$
close to $5/3$ and $q$ close to 5/2. Hence, we may  assume that $p<2,$ $p<q<2p=r.$  Recall also that we  already assume that $h(y)$ is flat at the origin, that $h'''(y)>0$ for $y>0,$  and 
 that $\supp f\subset\{(x,y)\in \Om:  y\ge 0\}.$  Let let us therefore in the sequel assume that $\Om$ is a rectangle of the form 
 $\Om=[-1,1]\times [0,\eta_1],$ where $\eta_1>0$ is a sufficiently small dyadic number. Corresponding to $\eta_1,$ we also choose a small dyadic number $\la_1>0$ so that $|h'''(y)|\le \la_1$ on $\Om.$ Note that we can choose $\la_1$ sufficiently small by choosing $\eta_1$ small enough. 
 
The problem in passing  from our restriction  estimates in Theorem \ref{cubrest} to the result in Theorem \ref{mainresult} lies in the fact that, unlike in the case of finite type perturbations studied in \cite{bmv17}, we can here usually no longer simply  sum the estimates given by Theorem \ref{cubrest} for the contributions by the horizontal strips  on which $h'''\sim \la$ over all dyadic values $0<\la \le \la_1,$ since we have insufficient control  on the lengths of the intervals $I_{\la,\iota}$ given by Theorem \ref{levelset}. To overcome this problem, we shall apply once more the bilinear method,  making use of bilinear estimates for pairs of surface patches on which $h'''$ is sufficiently small, which, however, will rather easily be established, following basically the approach from \cite{v05} (and \cite{lee05}) devised for the unperturbed  parabolic hyperboloid. In a final step, we shall then fuse these  bilinear estimates with the linear estimates from Theorem \ref{cubrest} by means of a kind of bootstrap argument. It will be in this part where we shall have to make the stronger assumption that $h'''$ is monotonic (say, increasing).

\smallskip

A key step in  \cite{v05}, \cite{lee05} consists in devising  a suitable  Whitney-type decomposition of $\Om\times\Om$ into direct products of rectangles; more details 
will be given later.  This leads us to considering bilinear estimates over pairs of ``close'' rectangles $W_1=J_1\times I_1,W_2=J_2\times I_2$  contained in $\Om,$ where $J_1,J_2$ are intervals of dyadic length $d\le 1,$ and  $I_1,I_2$ are intervals of dyadic length $r\le \eta_1,$ and which are separated in the $x$-coordinate  of size $d$ and in the $y$-coordinates of size $r.$ 

Consider such a pair $W_1,W_2,$ and set for $i=1,2,$
\begin{eqnarray*}
W_i^>:&=&\{(x,y)\in W_i: h'''(y) >dr/100\},\\
W_i^<:&=&\{(x,y)\in W_i: h'''(y) \le dr/100\}.
\end{eqnarray*}
By applying  Theorem \ref{levelset} to the function $\vp:=h'''$  let us decompose the interval $[0,\eta_1]$ into intervals $I_{\la,\iota}$ on which 
$\la/2<h'''(y)\le 4\la \quad (\la>0 \quad\text{dyadic}).$
Note that if $h'''$ is monotonic, for each $\la$ there will be at most one corresponding $\iota,$ but for the time being this will not yet be relevant and we could as well work with non-monotic $h'''.$ Correspondingly, for any subset $W$ of $\Om$ we put $W^{\la,\iota}:= \{(x,y)\in W: y\in I_{\la,\iota}\}.$ 
\medskip

Choose next $\tilde q$ so that $1/q'>1/\tilde q'>2/r.$ Applying the argument from the beginning of Section \ref{h'''simla} and making use of the equivalence of \eqref{extI1} and \eqref{extI2} in combination with Theorem \ref{cubrest} we then find that
$$
\|\ext_{W_i^{\la,\iota}}f\|_{L^r} \leq C_{r,\tilde q} \|f|_{W_i^{\la,\iota}}\|_{L^{\tilde q}},
$$
since $d_{I_{\la,\iota}}^{1-2/r-1/\tilde q}\le 1,$  and thus in combination with H\"older's estimate we obtain 
 $$
\|\ext_{W_i^{\la,\iota}}f\|_{L^r} \leq C_{r,\tilde q}(dr)^{\frac 1{\tilde q}-\frac 1q} \|f|_{W_i}\|_{L^q}.
$$
Write $\ve:=( 1/{\tilde q}- 1/q)/3.$  Then $\e>0,$  and if $\la\ge dr/400,$ then $(dr)^{\frac 1{\tilde q}-\frac 1q}\lesssim \la^{2\ve} (dr)^\ve.$  Decomposing $W_1^>$ into such sets $W_i^{\la,\iota},$  we then see that by choosing $r\in\N$ in Theorem \ref{levelset} sufficiently large we can sum the preceding estimates over the corresponding dyadic $\la$'s with $\la\le \la_1$ and $\iota$'s and arrive at the following uniform linear estimates:

\begin{equation}\label{E>} 
\|\ext_{W_i^>}f\|_{L^r} \leq C_{r,q} \la_1^\ve  |W_i|^\ve\|f_{W_i^>}\|_{L^q},\qquad i=1,2,
\end{equation}
where have used the notation  $f_A:= f\chi_A$ for any subset $A\subset \Om.$ Recall that this part of the argument does not require any monotonicity assumption on $h'''$ yet.

\medskip
Our next step, namely the proof of the following bilinear estimate for the operators $\ext_{W_i^<},$ will, in contrast, make use of this monotonicity assumption:

\begin{equation}\label{E<}
\|\ext_{W_1^<}f\, \ext_{W_2^<}g\|_{L^p} \leq C_{r,q}  |W_1|^{\frac 1 {q'}-\frac 1 p} |W_2|^{\frac 1 {q'}-\frac 1 p}\|f_{W_1^<}\|_{L^q} \|g_{W_2^<}\|_{L^q}.
\end{equation}
\begin{proof} 
By the monotonicity of $h''',$ the set $J^<:=\{y\in [0,\la_1]: h'''(y)\le dr/100\}$ is an interval of the form $J^<=[0,\eta],$ with $0\le \eta\le \eta_1.$ Looking at the Taylor expansion of $h$ around $0,$  we see that this implies that 
$$
|h^{(k)}(y)| \le dr/100 \qquad \text{for all} \quad y\in J^<,\,  k=0,1,2,3.
$$
Let us next assume w.l.o.g. that the rectangle $W_1$ is located ``below''  $W_2,$ and that $W_2^<\ne \emptyset.$ Then 
$$
W_1^<=W_1 \quad \text{and}\quad W_2^<=W_2\cap \{(x,y):y\in [0,\eta]\}.
$$
Denote by $z_1^0=(x_1^0,y_1^0)$ the lower left vertex of $W_1,$ and translate the coordinates $(x,y)$ so that $z_1^0$ becomes the origin. Then, in the new coordinates, the function $\phi$ assumes the form
$$
\phi(x,y)=xy+H(y),
$$
except for affine-linear terms which have no effect on the bilinear estimates, and $W_1$ assumes the form $W_1=[0,d]\times [0,r],$ and $W_2^<$ the form 
 $W_2^<=[2d, 3d]\times  [2r,\tilde \eta],$ with $2r\le \tilde \eta\le 3r,$ or $W_2^<=[-2d, -d]\times  [2r,\tilde \eta].$ Moreover, $H$ will also satisfy the estimates
 \begin{equation}\label{Hderiv}
|H^{(k)}(y)| \le dr/100 \qquad \text{for all} \quad y\in [0,\tilde \eta],\,   k=0,1,2,3.
\end{equation}
We next pass to the re-scaled coordinates $(x',y')$ defined by $x=d x', \, y=ry',$  and put 
$$
\phi^s(x',y'):= \frac 1{dr}\phi(dx',ry')= x'y'+H^s(y'), 
$$
where $H^s(y'):=H(ry')/dr.$ The rectangles $W_1^<$ and $W_2^<$ correspond to $(W_1^<)^s:=[0,1]\times [0,1]$ and  $W_2^<=[2, 3]\times  [2,\eta^s],$ or
$(W_2^<)^s=[-2, -1]\times  [2,\eta^s],$  in these coordinates, with $2\le \eta^s\le 3.$ Finally note that by \eqref{Hderiv}, we have 
 \begin{equation}\label{Hsderiv}
|(H^s)^{(k)}(y')| \le 1/100 \qquad \text{for all} \quad y'\in [0,\eta^s],\,   k=0,1,2,3,
\end{equation}
so that the function $\phi^s(x',y')$ is a very small perturbation of $x'y',$ whereas $(W_1^<)^s$ and $(W_2^<)^s$ are contained in squares of side length $1$ which are $1$-separated in each coordinate. One can then easily check that both  transversality functions $|TV^s_1|$ and $|TV^s_2|$ associated to $\phi^s$  (cf. \eqref{transnew}) are of size $\sim 1$ on $(W_1^<)^s\times (W_2^<)^s.$ Thus, if we had also a good control on higher order derivatives of $H^s$ of the form
$|(H^s)^{(k)}(y')| \le C_k\ \text{for all} \ y'\in [0,\eta^s],\ k\ge 4,$ we could immediately argue as in \cite{v05}, or even apply directly Theorem 1.1 in \cite{lee05},
to prove the following bilinear estimate
\begin{equation}\label{Es<}
\|\ext^s_{(W_1^<)^s}f\, \ext^s_{(W_2^<)^s}g\|_{L^p} \leq C_{r,q} \|f_{(W_1^<)^s}\|_{L^q} \|g_{(W_2^<)^s}\|_{L^q}
\end{equation}
for the scaled surface given as the graph of $\phi^s,$ from which \eqref{E<} follows immediately by scaling back to our original coordinates.

However, this control of the higher order derivatives is here no longer available, only \eqref{Hsderiv}, but we had already seen in Section \ref{bilinprot} how to establish the required  bilinear estimates  even under such weaker assumptions, working with slowly decaying wave packets, and the same reasoning can be applied to our present situation,  and thus we can still verify the required estimates in  \eqref{Es<}.
\end{proof}

Given the estimates \eqref{E>} and \eqref{E<}, we can finally complete the proof of Theorem \ref{mainresult}. We first decompose the interval $[-1,1]$ for any dyadic number $2^{j_1}$ into dyadic subintervals $J^{j_1}_{k}$ of length $d=2^{-j_1}, j_1\ge 1,$ in the usual way and say that two such dyadic subintervals $J ^{j_1}_{k}$  and $J ^{j_1}_{k'}$ are {\it related} and write $J ^{j_1}_{k}\approx J ^{j_1}_{k'}$ if they are not adjacent but have adjacent dyadic parent intervals of length $2^{1-j_1}.$ In a similar way, we decompose the interval $[0,\eta_1]$ into dyadic intervals $I ^{j_2}_{l}$ of length $r=2^{-j_2}$   and define when two such intervals $I ^{j_2}_{l}$ and $I ^{j_2}_{l'}$ are related and write  $I^{j_2}_{l}\approx I^{j_2}_{l'}$ in the same way as before.  Finally, we put  $W^{j_1,j_2}_{k,l}:=J ^{j_1}_{k}\times I ^{j_2}_{l},$ and say that two such rectangles $W^{j_1,j_2}_{k,l}$ and $W^{j_1,j_2}_{k',l'}$  of dimension $d\times r$ are {\it related} and  write $W^{j_1,j_2}_{k,l}\approx W^{j_1,j_2}_{k',l'}$  if  $J ^{j_1}_{k}\approx J ^{j_1}_{k'}$ and $I^{j_2}_{l}\approx I^{j_2}_{l'}.$ This leads to a kind of Whitney decomposition of $\Om\times \Om$ away from its diagonal $D$ into rectangular boxes,  i.e., 
\begin{equation}\label{whitneyrect}
\Om\times \Om\setminus D=\bigcup\limits_{j_1\ge 1,\,  j_2\ge \log_2(1/\la_1)}\bigcup\limits_{W^{j_1,j_2}_{k,l}\approx W^{j_1,j_2}_{k',l'}}W^{j_1,j_2}_{k,l}\times W^{j_1,j_2}_{k',l'}.
\end{equation}
Since $\|\ext f\|^2_{L^r}=\|\ext f \ext g\|_{L^p},$ if we choose $g:=f,$ it will suffice to  estimate $\|\ext f\,  \ext g\|_{L^p}, $ and writing 
$$f^{j_1,j_2}_{k,l}:=f \chi_{W^{j_1,j_2}_{k,l}},\ g^{j_1,j_2}_{k',l'}:=g \chi_{W^{j_1,j_2}_{k',l'}},
$$
by \eqref{whitneyrect} we may decompose 
\begin{equation*}%\label{edecomp}
\ext f\,  \ext g=\sum\limits_{j_1\ge 1,\,  j_2\ge \log_2(1/\la_1)}\sum\limits_{W^{j_1,j_2}_{k,l}\approx W^{j_1,j_2}_{k',l'}} (\ext f ^{j_1,j_2}_{k,l})\,  (\ext g^{j_1,j_2}_{k',l'}),
\end{equation*}
so that 
\begin{equation}\label{edecomp}
\|\ext f \ext g\|_{L^p}\le \sum\limits_{j_1\ge 1,\,  j_2\ge \log_2(1/\la_1)}\Big\| \sum\limits_{W^{j_1,j_2}_{k,l}\approx W^{j_1,j_2}_{k',l'}} (\ext f ^{j_1,j_2}_{k,l})\,  (\ext g^{j_1,j_2}_{k',l'})\Big\|_{L^p}.
\end{equation}
We shall estimate each summand separately. To this end, let us fix $j_1,j_2,$ and  defray the notation by writing $d:=2^{-j_1}, r:=2^{-j_2},$ and $W_{1;k,l}:=W^{j_1,j_2}_{k,l},\,  W_{2;k',l'}:=W^{j_1,j_2}_{k',l'},$ and $f_{k,l}:=f ^{j_1,j_2}_{k,l}, \, g_{k',l'}:=g^{j_1,j_2}_{k',l'}.$ We also shortly write $k\approx k',l\approx l'$ in place of  $W_{1;k,l}\approx W_{2;k',l'}.$ Note that  $|k-k'|\le 2$ and $|l-l'|\le 2,$ if $k\approx k',l\approx l'.$  Then, with the preceding notation, since $d$ and $r$ are given, we may decompose 
\begin{eqnarray*}
\ext f_{k,l}&=&\ext_{W_{1;k,l}^>}f_{k,l}+\ext_{W_{1;k,l}^<}f_{k,l},\\
\ext g_{k',l'}&=&\ext_{W_{2;k',l'}^>}g_{k',l'}+\ext_{W_{2;k',l'}^<}g_{k',l'},\\
\end{eqnarray*}
which leads to a decomposition of $(\ext f ^{j_1,j_2}_{k,l})\,  (\ext g^{j_1,j_2}_{k',l'})$ into four terms, whose contributions we shall compute separately.
\medskip 

We begin by estimating $I:= \|\sum_{k\approx k',l\approx l'} \ext_{W_{1;k,l}^>}f_{k,l} \, \ext_{W_{2;k',l'}^>}g_{k',l'}\|_p.$ To this end, we first apply a standard ''orthogonality'' argument (compare for the proof of Lemma 6.1 in \cite{TVV}), followed by an application of the  Cauchy-Schwarz inequality, to see that 
$$
I^p\lesssim  \sum_{k\approx k',l\approx l'} \|\ext_{W_{1;k,l}^>}f_{k,l} \, \ext_{W_{2;k',l'}^>}g_{k',l'}\|_p^p\le \sum_{k\approx k',l\approx l'} \|\ext_{W_{1;k,l}^>}f_{k,l}\|_r^p \, 
\|\ext_{W_{2;k',l'}^>}g_{k',l'}\|_r^p.
$$
Next, since $|k-k'|\le 2$ and $|l-l'|\le 2,$ applying again Cauchy-Schwarz, we may then essentially estimate $I$  by
$$
I\lesssim (\sum_{k,l} \|\ext_{W_{1;k,l}^>}f_{k,l}\|_r^r)^{1/r} (\sum_{k,l} \|\ext_{W_{1;k,l}^>}g_{k,l}\|_r^r)^{1/r}
$$
(plus at most $8$ terms of similar kind). By \eqref{E>}, this can eventually be estimated by 
\begin{equation}\label{Iest}
I\lesssim \la_1^\ve (cd)^\ve (\sum_{k,l} \|f_{k,l}\|_q^r)^{1/r} (\sum_{k,l} \|g_{k,l}\|_q^r)^{1/r}\le (cd)^\ve\|f\|_q \|g\|_q,
\end{equation}
since $r>q,$ where $\ve >0.$ 
\medskip

We next turn to the term $II:= \|\sum_{k\approx k',l\approx l'} \ext_{W_{1;k,l}^<}f_{k,l} \, \ext_{W_{2;k',l'}^<}g_{k',l'}\|_p.$ Making again use of the afore-mentioned ``othogonality'' argument in combination with the bilinear estimate \eqref{E<}, we find that 
$$
II^p\lesssim (cd)^{\ve p}\sum_{k\approx k',l\approx l'} \|f_{k,l}\|_q^p \, \|g_{k',l'}\|_q^p,
$$
if we assume that $1/q'-1/p>\ve.$ From here on we can argue in a similar way as before to see that also 
\begin{equation}\label{IIest}
II\lesssim (cd)^\ve (\sum_{k,l} \|f_{k,l}\|_q^r)^{1/r} (\sum_{k,l} \|g_{k,l}\|_q^r)^{1/r}\le (cd)^\ve\|f\|_q\|g\|_q.
\end{equation}
\medskip
As for the two ``mixed term'' sums $III $ and $IV$, which can be handled in analogous ways, let is just look at one of them, say 
$III:= \|\sum_{k\approx k',l\approx l'} \ext_{W_{1;k,l}^>}f_{k,l} \, \ext_{W_{2;k',l'}^<}g_{k',l'}\|_p.$ We can  first argue as  for $I$ to see that 
\begin{eqnarray*}
III^p\le  \sum_{k\approx k',l\approx l'} \|\ext_{W_{1;k,l}^>}f_{k,l} \, \ext_{W_{2;k',l'}^<}g_{k',l'}\|_p^p\le \sum_{k\approx k',l\approx l'} \|\ext_{W_{1;k,l}^>}f_{k,l}\|_r^p \, 
\|\ext_{W_{2;k',l'}^<}g_{k',l'}\|_r^p.
\end{eqnarray*}
Here, by \eqref{E>}, we can again estimate
$$
\|\ext_{W_{1;k,l}^>}f_{k,l}\|_r^p\lesssim (\la_1^\ve(cd)^\ve \|f_{k,l}\|_q)^p,
$$
but we do not know a priori that the operator $\ext_{W_{2;k',l'}^<}$ is bounded. 

Therefore we first replace the operator $\ext $ in the preceding estimates by the truncated operator 
$\ext^{\la_0}:=\ext_{A_{\la_0}},$ for any positive dyadic number $0<\la_0<\la_1,$ where $A_{\la_0}:=\{(x,y)\in \Om: h'''(y)>\la_0\}.$ Then, by \eqref{E>}, 
\begin{equation}\label{E>2} 
\|\ext^{\la_0} f\|_{L^r} \leq C_{r,q}(\la_0) \|f\|_{q},\qquad i=1,2,
\end{equation}
and we choose for $C_{r,q,\la_0}$ the smallest possible constant for this estimate. Then we can also estimate 
$\|\ext^{\la_0}_{W_{2;k',l'}^<}g_{k',l'}\|_r\le C_{r,q}(\la_0)\|g_{k',l'}\|_q,$ and if we denote $III_{\la_0}$ the same expression as $III,$ only with $\ext$ replaced by $\ext^{\la_0},$ then we find again by the Cauchy-Schwarz' inequality and a similar reasoning as before that 
\begin{equation}\label{IIIest}
III_{\la_0}\le  C  \la_1^\ve(cd)^\ve C_{r,q}(\la_0)\|f\|_q\|g\|_q.
\end{equation}
The same kind of estimate also holds for the second ''mixed term'' $IV_{\la_0}.$ Combing the latter   estimate and  \eqref{Iest}--\eqref{IIIest}, and choosing $f=g,$ we then find that 
$$
\Big\| \sum\limits_{W^{j_1,j_2}_{k,l}\approx W^{j_1,j_2}_{k',l'}} (\ext f ^{j_1,j_2}_{k,l})\,  (\ext g^{j_1,j_2}_{k',l'})\Big\|_{p}
\le C  (dr)^\ve \big[1+\la_1^\ve C_{r,q}(\la_0)\big]\|f\|_q\|g\|_q.
$$
Summing finally over all dyadic $d=2^{-j_1}$ and $r=2^{-j_2},$ by \eqref{edecomp} we find that 
$$
\|\ext_{\la_0} f\|_r\le C \big[1+\la_1^\ve C_{r,q}(\la_0)\big]^{\frac 12}\|f\|_q.
$$ 
Thus 
$$C_{r,q}(\la_0)\le C \big[1+\la_1^\ve C_{r,q}(\la_0)\big]^{\frac 12},$$
and choosing $\la_1$ sufficiently  small, we see that $C_{r,q}(\la_0)$ is uniformly bounded in $\la_0,$ i.e., there is a constant $C_{r,q}$ such that 
\begin{equation}\label{extuni}
\|\ext_{\la_0} f\|_r\le C \|f\|_q \qquad \text{for all} \ 0<\la_0\le \la_1.
\end{equation}
Since $\ext_{\la_0} f\to \ext f$ uniformly on compact sets as $\la_0\to 0 ,$ this implies that also  $\|\ext f\|_r\le C \|f\|_q,$  so that the proof of Theorem \ref{mainresult} is complete.

%%%%%%%%%%%%%%%%%%%%%%%%%%%%%%%%%%%%%%%%%%%%%%%%%%%%%%%%%%%%%%%%%%%%%%%%%%%%%%
%%%%%%%%%%% References
%%%%%%%%%%%%%%%%%%%%%%%%%%%%%%%%%%%%%%%%%%%%%%%%%%%%%%%%%%%%%%%%%%%%%%%%%%%%%%
\bigskip

\thispagestyle{empty}
%\newpage

\renewcommand{\refname}{References}


\begin{thebibliography}{----------}


  \bibitem [Be16]{be16}  Bejenaru, I.,  Optimal bilinear restriction estimates for general
    hypersurfaces and the role of the shape operator. Int. Math. Res. Not. IMRN  (2017),  no. 23, 7109--7147.

\bibitem [Bo91] {Bo1} Bourgain, J., Besicovitch-type maximal operators and applications to
    Fourier analysis.  Geom. Funct. Anal. 22 (1991), 147--187.
\bibitem [Bo95a] {Bo2}  Bourgain, J.,  Some new estimates on oscillatory integrals. Essays
    in Fourier Analysis in honor of E. M. Stein. Princeton Math. Ser. 42, Princeton
    University Press, Princeton, NJ 1995, 83--112.
\bibitem [Bo95b]  {Bo3}  Bourgain, J.,  Estimates for cone multipliers. Oper. Theory Adv. Appl. 77 (1995), 1--16.
\bibitem [BoG11] {BoG}  Bourgain, J.,  Guth, L., Bounds on oscillatory integral operators based on multilinear estimates. Geom. Funct. Anal., Vol.21 (2011)
    1239--1295.
\bibitem [BMV16] {bmv16}  Buschenhenke, S., M\"uller, D.,  Vargas,  A., A Fourier
    restriction theorem for a two-dimensional surface of finite type. Anal. PDE 10-4 (2017), 817--891.
\bibitem [BMV17] {bmv17}  Buschenhenke, S., M\"uller, D.,  Vargas,  A., A Fourier
    restriction theorem for a perturbed  hyperbolic paraboloid.   Proc. London Math. Soc.   (3) 120 (2020), no. 1, 124--154.
\bibitem [BMV19] {bmv19}  Buschenhenke, S., M\"uller, D.,  Vargas,  A.,
  On  Fourier restriction for finite-type  perturbations of the  hyperbolic paraboloid,
  preprint 2019,  arXiv:1902.05442v2.
\bibitem[ChL17]{chl17} Cho, C.-H.,  Lee, J.,  Improved restriction estimate for hyperbolic
    surfaces in $\R^3$  . J. Funct. Anal.  273  (2017),  no. 3, 917--945.
     \bibitem [Gr81] {Gr} Greenleaf, A., Principal Curvature and Harmonic Analysis. Indiana
    Univ. Math. J. Vol. 30, No. 4 (1981).
\bibitem[Gu16]{Gu16}  Guth, L.  A restriction estimate using polynomial partitioning. J.
    Amer. Math. Soc.  29  (2016),  no. 2, 371--413.
    \bibitem[Gu17]{Gu17}  Guth, L.,   Restriction estimates using polynomial partitioning
    II. Acta Math. Vol. 221, No. 1 (2016), 81--142.
    \bibitem [IKM10] {ikm}  Ikromov, I. A.,  Kempe, M.,  M\"uller, D.,  Estimates for maximal
    functions associated with hypersurfaces in $\R^3$ and related problems in harmonic
    analysis. Acta Math. 204 (2010), 151--271.
\bibitem[IM11]{IM-uniform}  Ikromov, I. A.,   M\"uller, D.,  Uniform estimates for the
    Fourier transform of surface carried measures in  $\R^3$ and an application to Fourier
    restriction. J. Fourier Anal. Appl., 17 (2011), no. 6, 1292--1332.
\bibitem[IM15] {IM} Ikromov, I. A.,   M\"uller, D., Fourier restriction for hypersurfaces
    in three dimensions and Newton polyhedra. Annals of Mathematics Studies, 194.
    Princeton
    University Press, Princeton, NJ, 2016.

\bibitem [K17]{k17}  Kim, J.,  Some remarks on Fourier restriction estimates, preprint
    2017. arXiv:1702.01231
\bibitem [L05] {lee05}  Lee, S.,  Bilinear restriction estimates for surfaces with
    curvatures of different signs, Transactions of the American Mathematical Society, Vol.
    358, No. 8, 3511--2533, 2005.
    \bibitem [LV10] {lv10} Lee, S., Vargas, A.,  Restriction estimates for some surfaces with
    vanishing curvatures. J. Funct. Anal.  258  (2010),  no. 9, 2884--2909.
    \bibitem [MVV96] {MVV1} Moyua, A., Vargas, A.,  Vega, L., Schr\"odinger maximal function
    and restriction
properties of the Fourier transform. Internat. Math. Res. Notices 16 (1996), 793--815.
\bibitem [MVV99] {MVV2} Moyua, A., Vargas, A.,  Vega, L., Restriction theorems and maximal
    operators related to oscillatory integrals in $\R^3$. Duke Math. J., 96 (3), (1999),
    547--574.
\bibitem [SJ 74]{sj74} Sj\"olin, Per,  Fourier multipliers and estimates of the Fourier transform of
   measures carried by smooth curves in $\R^{2},$ Studia Math.  51  (1974), 169--182.
\bibitem [St86] {St1}  Stein, E.M., Oscillatory Integrals in Fourier Analysis. Beijing
    Lectures in Harmonic Analysis. Princeton Univ. Press 1986.

      \bibitem [Sto17a] {Sto17a}  Stovall, B.,  Linear and bilinear restriction to certain rotationally
    symmetric hypersurfaces. Trans. Amer. Math. Soc.  369  (2017),  no. 6, 4093--4117.
     \bibitem [Sto17b] {Sto17b}  Stovall, B.,  Scale invariant Fourier restriction to a hyperbolic
    surface. Anal. PDE 12 (2019), no. 5, 1215--1224.

    \bibitem [Str77] {Str} Strichartz, R. S., Restrictions of Fourier transforms to quadratic
    surfaces and decay of solutions of wave equations. Duke Math. J.  44  (1977), no. 3,
    705--714.

    \bibitem [T01] {T4} Tao, T.,  Endpoint bilinear restriction theorems for the cone, and
    some sharp null-form estimates. Math. Z. 238 (2001),215--268.
    \bibitem [T03] {T2} Tao, T., A Sharp bilinear restriction estimate for paraboloids.
    Geom. Funct. Anal. 13, 1359--1384, 2003.
 \bibitem [TVV98] {TVV} Tao, T., Vargas, A., Vega, L., A bilinear approach to the
    restriction and Kakeya conjectures. J. Amer. Math. Soc. 11 (1998) no. 4 , 967--1000.
    \bibitem [TVI00] {TV1} Tao, T.,  Vargas, A.,  A bilinear approach to cone multipliers I.
    Restriction estimates. Geom. Funct. Anal. 10, 185--215, 2000.
\bibitem [TVII00] {TV2} Tao, T.,  Vargas, A.,  A bilinear approach to cone multipliers II.
    Applications. Geom. Funct. Anal. 10, 216--258, 2000.
    \bibitem [To75] {To}  Tomas,  P. A., A restriction theorem for the Fourier transform.
    Bull. Amer. Math. Soc. 81 (1975), 477--478.


\bibitem [V05]{v05}  Vargas, A.,  Restriction theorems for a surface with negative
    curvature, Math. Z. 249, 97--111 (2005).
    \bibitem [W01] {W2} Wolff, T.,  A Sharp Bilinear Cone Restriction Estimate. Ann. of Math., Second Series, Vol. 153, No. 3, 661--698, 2001.
\end{thebibliography}
\end{document}